\documentclass[11pt,a4paper,reqno]{amsart}
\usepackage[english]{babel}
\usepackage[applemac]{inputenc}
\usepackage[T1]{fontenc}
\usepackage{palatino}
\usepackage{amsmath}
\usepackage{amssymb}
\usepackage{amsthm}
\usepackage{amsfonts}
\usepackage{graphicx}
\usepackage{color}
\usepackage{mathtools}

\usepackage{vmargin}

\setmarginsrb{23mm}{20mm}{23mm}{20mm}{10mm}{10mm}{10mm}{10mm}

\usepackage[colorlinks = true, citecolor = black]{hyperref}
\pagestyle{headings}
%

\newtheorem{theorem}{Theorem}[section]
\newtheorem{obs}{Observation}[section]

\newtheorem{cor}[theorem]{Corollary}
\newtheorem{prop}[theorem]{Proposition}

\newtheorem{lem}[theorem]{Lemma}

\theoremstyle{definition}
\newtheorem{defn}{Definition}[section]

\newtheorem{remark}[theorem]{Remark}

\def\vint{\mathop{\mathchoice%
          {\setbox0\hbox{$\displaystyle\intop$}\kern 0.22\wd0%
           \vcenter{\hrule width 0.6\wd0}\kern -0.82\wd0}%
          {\setbox0\hbox{$\textstyle\intop$}\kern 0.2\wd0%
           \vcenter{\hrule width 0.6\wd0}\kern -0.8\wd0}%
          {\setbox0\hbox{$\scriptstyle\intop$}\kern 0.2\wd0%
           \vcenter{\hrule width 0.6\wd0}\kern -0.8\wd0}%
          {\setbox0\hbox{$\scriptscriptstyle\intop$}\kern 0.2\wd0%
           \vcenter{\hrule width 0.6\wd0}\kern -0.8\wd0}}%
          \mathopen{}\int}

\newcommand{\ep}{\epsilon}
\newcommand{\Om}{\Omega}
\newcommand{\om}{\omega}

\newcommand{\R}{\mathbb{R}}
\newcommand{\Rn}{{\mathbb R}^n}

\newcommand{\ud}{\mathrm {d}}
\newcommand{\Hp}{\mathcal{H}^p}
\newcommand{\dist}{\operatorname{dist}}

\newcommand{\Sn}{\mathbb{S}^{n-1}}
\newcommand{\A}{\mathcal{A}}

\definecolor{blau}{rgb}{0.1,0.0,0.9}
\definecolor{violet}{rgb}{0.54, 0.17, 0.89}
\newcommand{\blue}{\color{blau}}

\newcommand{\kom}[1]{}
\renewcommand{\kom}[1]{{\bf \blue /#1/}}

\newcounter{komcounter}
\numberwithin{komcounter}{section}

\begin{document}

\title[Hardy spaces and QR mappings]{Hardy spaces and quasiregular mappings}
\author[T.\ Adamowicz]{Tomasz Adamowicz{\small$^1$}}
\address{The Institute of Mathematics, Polish Academy of Sciences \\ ul. \'Sniadeckich 8, 00-656 Warsaw, Poland}
\email{tadamowi@impan.pl}
\author[M.\ J.\ Gonz\'alez ]{Mar\'ia\ J.\ Gonz\'alez{\small$^2$}}
\address{Departamento de Matem\'aticas, Universidad de C\'adiz, 11510 Puerto Real (C\'adiz), Spain}
\email{majose.gonzalez@uca.es}

\thanks{{\small$^1$}T.A.\ was supported by the National Science Center, Poland (NCN), UMO-2017/25/B/ST1/01955. 
\newline
{\small$^2$} M.J.G. was  supported in part by the Spanish Ministerio de Ciencia e Innovaci\'on (grant no.  PID2021-123151NB-I00), and  by the grant "Operator Theory: an interdisciplinary approach", reference ProyExcel\_00780, a project financed in the 2021 call for Grants for Excellence Projects, under a competitive bidding regime, aimed at entities qualified as Agents of the Andalusian Knowledge System, in the scope of the Andalusian Research, Development and Innovation Plan (PAIDI 2020). Counseling of University, Research and Innovation of the Junta de Andaluc\'ia.}
\keywords{Carleson measure, Hardy spaces, multiplicity, non-tangential maximal function, quasiregular mappings}
\subjclass[2010]{(Primary) 30C65  (Secondary) 30H10}

\begin{abstract}
We study Hardy spaces $\mathcal{H}^p$, $0<p<\infty$ for quasiregular mappings on the unit ball $B$ in $\Rn$ 
which satisfy appropriate growth and multiplicity conditions. Under these conditions we recover several classical results for analytic functions and quasiconformal mappings in $\mathcal{H}^p$. In particular, we characterize $\mathcal{H}^p$ in terms of non-tangential limit functions and non-tangential maximal functions of quasiregular mappings. Among applications we show that every quasiregular map in our class belongs to $\mathcal{H}^p$ for some $p=p(n,K)$. Moreover, we provide characterization of Carleson measures on $B$ via integral inequalities for quasiregular mappings on $B$. We also discuss the Bergman spaces of quasiregular mappings and their relations to $\mathcal{H}^p$ spaces and analyze correspondence between results for $\mathcal{H}^p$ spaces and $\A$-harmonic functions. 

A key difference between the previously known results for quasiconformal mappings in $\Rn$ and our setting is the role of multiplicity conditions and the growth of mappings that need not be injective.

Our paper extends results by Astala and Koskela, Jerison and Weitsman, Jones, Nolder, and Zinsmeister.
\end{abstract}

\maketitle

\section{Introduction}	

We say that a mapping $f$ from a unit ball $B(0,1)\subset \Rn$ with values in $\Rn$ belongs to \emph{the Hardy space $\mathcal{H}^p$} for a $0<p<\infty $, if
\[
\sup_{0<r<1} \left(\vint_{\Sn} |f (r\om)|^p ~\ud\om\right)^{\frac1p} := \|f\|_{\mathcal{H}^p}< \infty.
\]
In the setting of holomorphic functions on unit disc this definition is well known and it is due to Hardy and Littlewood \cite{hl}, that a holomorphic function $f\in \mathcal{H}^p$ if and only if the non-tangential maximal function of $f$ belongs to $L^p(\mathbb{S}^1)$. 
The Hardy spaces play a significant role in complex analysis, for instance in the studies of the boundary behaviour of functions, in the theory of Carleson measures and in the Nevanlinna theory, see e.g.~\cite{D, G}. Furthermore, the Hardy spaces have met geometric mapping theory, leading to the $H^p$ theory for quasiconformal mappings in Euclidean spaces, see Astala--Koskela~\cite{AK}, Zinsmeister~\cite{z} and Nolder~\cite{nol}, also in the setting of the Heisenberg group $\mathbb{H}_1$, see~\cite{af}.  

The main goal of this paper is to introduce an $\mathcal{H}^p$ theory for quasiregular mappings on the unit ball in $\Rn$ for $n\geq 2$, thus extending both the quasiconformal $\mathcal{H}^p$ theory to the setting of not necessarily injective mappings, as well as the classical $\mathcal{H}^p$ theory for analytic functions (recall that the $1$-quasiregular mappings in the plane coincide with analytic functions). The first step toward developing this program has been made in~\cite{AG}, where the $\mathcal{H}^p$ theory for quasiregular mappings in the plane is studied. However, the case $n=2$ is the only one in which the Stoilow factorization is available and in higher dimensions a different approach is needed. It turns out that the following two conditions are crucial:
\begin{itemize}
\item[(1)] the control of the multiplicity of a mapping, see condition~\eqref{cond-m} and Section 3 below;
\item[(2)] the growth of the mapping, see the discussion on the Miniowitz estimates~\eqref{est-min} and Theorem~\ref{thm-min}. 
\end{itemize}
The multiplicity and the growth conditions affect existence of non-tangential limits and integrability of the Jacobian of the underlying quasiregular map, also allow the Carleson measures estimates.

Our main result is the following characterization of the Hardy spaces of quasiregular mappings:

\begin{theorem}\label{thm-main}
  Let $n \geq 2$ and $f:B\to \Rn \setminus \{0\}$ be a $K$-quasiregular mapping in class~\eqref{est-min} satisfying the multiplicity condition~\eqref{cond-m} with $0\leq a<n-1$. Then the following conditions are equivalent for every $p>0$:
  \begin{itemize}
   \item[(1)] $f\in \mathcal{H}^p$,
   \item[(2)] the non-tangential limit function $\tilde{f}\in L^p(\mathbb{S}^{n-1})$,
   \item[(3)] the non-tangential maximal function $f^*\in L^p(\mathbb{S}^{n-1})$.
  \end{itemize}
  Moreover, all the norms in conditions (1)-(3) are equivalent:
    \[
  \|f\|_{\mathcal{H}^p}\approx \|f^*\|_{L^p(\mathbb{S}^{n-1})} \approx \|\tilde{f}\|_{L^p(\mathbb{S}^{n-1})},
  \]
  where the equivalence constants depend only on $n, p $ and $a, K, \alpha$ (see~\eqref{est-min} for the definition of $\alpha$).
\end{theorem}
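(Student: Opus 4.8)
The plan is to establish the cycle of implications $(1)\Rightarrow(3)\Rightarrow(2)\Rightarrow(1)$, together with the corresponding norm bounds, so that the equivalence of the three quantities follows. The implication $(3)\Rightarrow(2)$ is the cheapest: once $f^*\in L^p(\Sn)$ one knows a fortiori that $f$ is nontangentially bounded at almost every boundary point, and by the Miniowitz-type growth control in class~\eqref{est-min} together with the multiplicity bound~\eqref{cond-m} with $a<n-1$, the mapping has finite nontangential limits a.e.\ on $\Sn$; then $|\tilde f|\le f^*$ pointwise a.e., so $\|\tilde f\|_{L^p}\le\|f^*\|_{L^p}$. The implication $(2)\Rightarrow(1)$ I would obtain by a Fatou-type argument: writing $f_r(\om)=f(r\om)$, one shows the family $\{|f_r|^p\}$ is uniformly integrable on $\Sn$ (this is exactly where the growth hypothesis~\eqref{est-min} enters, bounding $|f(x)|$ by a power of $1/(1-|x|)$ and controlling the distribution function), hence $\int_{\Sn}|f_r|^p\,d\om\to\int_{\Sn}|\tilde f|^p\,d\om$, giving $\|f\|_{\Hp}=\|\tilde f\|_{L^p}$ with constant $1$ in the limit, or at worst $\|f\|_{\Hp}\lesssim\|\tilde f\|_{L^p}$.

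The substantial implication is $(1)\Rightarrow(3)$, the quasiregular analogue of the Hardy--Littlewood maximal theorem. Here I would follow the Astala--Koskela/Nolder strategy adapted to the non-injective setting. The key is a subharmonicity or reverse-H\"older phenomenon: for a $K$-quasiregular $f$ avoiding $0$, the function $|f|^q$ is $\mathcal A$-subharmonic for a suitable small exponent $q=q(n,K)>0$ (or one works with $u=\log|f|$ and its $\mathcal A$-harmonic behaviour, cf.\ the discussion of $\mathcal A$-harmonic functions promised in the paper). Combining this with the multiplicity condition~\eqref{cond-m} — which converts integrals over the image counted with multiplicity into integrals over the domain with a controlled factor depending on $a<n-1$ — one derives a pointwise estimate of $f^*(\om)$ in terms of a Hardy--Littlewood maximal function, on $\Sn$, of the boundary values, or more directly in terms of $\sup_r\vint_{\Sn}|f(r\cdot)|^p$. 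Standard maximal function theory on the sphere (or a good-$\lambda$/Whitney-type covering of the Carleson boxes) then yields $\|f^*\|_{L^p(\Sn)}\lesssim\|f\|_{\Hp}$, with constants depending only on $n,p,a,K,\alpha$.

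The main obstacle is precisely this last step: in the quasiconformal case injectivity makes the change of variables $x\mapsto f(x)$ exact, whereas for merely quasiregular $f$ one must account for multiplicity, and the whole argument only closes because~\eqref{cond-m} caps the local index by a power $(1-|x|)^{-a}$ with $a<n-1$ strictly. One must check that this exponent is compatible with the integrability exponents produced by the reverse-H\"older inequality for the Jacobian and by the Miniowitz growth bound; tracking these exponents and verifying the borderline $a<n-1$ (rather than $a\le n-1$) is exactly what makes the constants, and the conclusion, work. A secondary technical point is to ensure the nontangential approach regions on $B$ are handled uniformly, which is routine given the growth class but must be stated carefully so that $f^*$ is measurable and the maximal estimates apply.
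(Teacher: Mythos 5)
Your easy implications are fine: the nontangential limits exist a.e.\ because the Miniowitz class gives the growth condition \eqref{cond-g} and the multiplicity hypothesis then lets you invoke the Koskela--Manfredi--Villamor trace theorem; $|\tilde f|\le f^*$ gives $(3)\Rightarrow(2)$; and integrating $|f(r\om)|\le f^*(\om)$ over $\Sn$ gives $(3)\Rightarrow(1)$. The genuine gap is your $(2)\Rightarrow(1)$. Uniform integrability of $\{|f_r|^p\}_{0<r<1}$ does not follow from the growth hypothesis: \eqref{est-min} only yields $|f(r\om)|\lesssim |f(0)|(1-r)^{-\alpha}$ uniformly in $\om$, so on a set $E\subset\Sn$ of small measure the best it gives is $\int_E|f_r|^p\lesssim \sigma(E)(1-r)^{-\alpha p}$, which is useless as $r\to 1$. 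Fatou's lemma goes the wrong way here (it gives $\|\tilde f\|_{L^p}\le\liminf_r\|f_r\|_{L^p}$), and uniform integrability already presupposes $\sup_r\|f_r\|^p_{L^p}<\infty$, which is precisely statement (1). Passing from boundary integrability to interior control is the entire content of the theorem and cannot be obtained by soft measure theory.

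The paper closes the cycle as $(1)\Rightarrow(2)\Rightarrow(3)\Rightarrow(1)$, and all the work sits in $(2)\Rightarrow(3)$: one first proves the quasiregular Jones lemma (Lemma~\ref{lem-Jones-qr}), i.e.\ that $\frac{|Df|^q}{|f|^q}(1-|x|)^{q-1}\,dx$ is a Carleson measure for $0<q<n$ --- this is exactly where the strict inequality $a<n-1$ enters, through the convergence requirement $0<\ep<q\left(1-\frac{a+1}{n}\right)$ in the dyadic ring decomposition --- and then combines the resulting Carleson property of $|\nabla\ln|f||\,dx$ with the Harnack inequality of Corollary~\ref{cor:harnack} to apply Zinsmeister's Proposition~\ref{prop-zin}, obtaining $\|f^*\|_{L^p}\le C\|\tilde f\|_{L^p}$. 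Your sketch of $(1)\Rightarrow(3)$ via a pointwise bound $f^*(\om)^q\lesssim M(|\tilde f|^q)(\om)$ is essentially the alternative route recorded in Remark~\ref{rem-Jones-main}; note that it uses only $\tilde f\in L^p$, so it is really a proof of $(2)\Rightarrow(3)$ and would close your cycle without the faulty Fatou step. But to make it rigorous you need the submean value inequality $|f(x)|^q\le C\vint_{S(x)}|\tilde f|^q$ of Proposition~\ref{prop-cor41}, whose proof again rests on the global Carleson estimate for $\int_B|Df|/|f|$ (also for the compositions $f\circ T_z^{-1}$, which is why the M\"obius-invariant multiplicity condition \eqref{cond-m}, not merely \eqref{precond-m}, is assumed) together with a Zinsmeister-type distribution estimate for the sets $\{|f|\le|f(z)|/N\}$ on shadows. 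None of this is supplied by ``subharmonicity of $|f|^q$'' alone. In short: isolate and prove the Carleson measure lemma first; the remainder of your outline then goes through.
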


We present the proof of Theorem~\ref{thm-main} in Section~\ref{section-4} along several applications. The key auxiliary tool, which is of the independent interest, is the so-called Jones lemma, here obtained for the quasiregular mappings, see Lemma~\ref{lem-Jones-qr}. It asserts that under assumptions of Theorem~\ref{thm-main}, a quasiregular map $f$ defines the following family of Carleson measures on $B$ for $0<q<n$:
 \[
  \ud\mu=\frac{|Df(x)|^q}{|f(x)|^q}(1-|x|)^{q-1} \ud x. 
 \]
 Furthermore, the Carleson constant of $\mu$ depends only on $n, q$ and $K, \alpha, a$. Such a result extends the corresponding observation for quasiconformal mappings due to Jones~\cite[Lemma 4.2]{j} for $n=2$ and Astala--Koskela~\cite[Lemma 5.6]{AK} for $n\geq 2$. Let us briefly advertise some applications of Lemma~\ref{lem-Jones-qr} and Theorem~\ref{thm-main}, see details in Sections~\ref{section-3} and~\ref{section-4}:
\smallskip
 
\noindent
(1)\, In Corollary~\ref{cor: jer-weits} we prove a quasiregular counterpart of the Jerison--Weitsman theorem, cf. ~\cite[Theorem 1]{JW}, showing that under the assumptions of Theorem~\ref{thm-main} every quasiregular mapping belongs to some Hardy space $\mathcal{H}^p$ for $p=p(n,K)$.

\noindent
(2)\, Proposition~\ref{obs:cor45} provides a new characterization of Carleson measures in terms of quasiregular mappings. 

\noindent
(3)\, Proposition~\ref{prop-cor41} gives the submean value property for the modulus of a quasiregular mapping.

\noindent
(4)\, Theorem~\ref{t: Ap-spaces} gives the relation between the Hardy spaces $\mathcal{H}^p$ and the Bergman spaces of quasiregular mappings.

\noindent
(5)\, In last section of the paper we employ the relations between quasiregular mappings and elliptic PDEs to discuss how the result for $\mathcal{H}^p$ spaces reflect for related $\A$-harmonic functions, in particular for $p$-harmonic functions in the plane.

\section{Preliminaries}	

 \subsection{Quasiregular mappings.}\label{sec-22} Let $\Om\subset \Rn$ be a domain. A continuous mapping $f:\Om \rightarrow \mathbb{R}^n$  is called $K$-\emph{quasiregular} for $K\geq 1$, if $f$ belongs to the Sobolev space $W_{loc}^{1,n}(\Om, \Rn)$ and the distortion inequality
  \[
  |Df(x)|^n\leq  K J_f (x)
  \] 
  holds for almost every $x\in \Om$. Here $|Df(x)|$ denotes the operator norm of the formal derivative of $f$ at $x\in \Om$ and $J_f(x)$ stands for the Jacobian determinant of $Df$ at $x$.

 If in addition we require $f$ to be a homeomorphism, then we say that $f$ is $K$-\textit{quasi\-con\-formal}. For comprehensive introductions to the topic and further references we refer, for instance, to~\cite{Ric} and Chapter 14 in~\cite{hkm} and \cite{resh} for results in $n\geq 2$ and~\cite{aim} for $n=2$. 
 
 Let us comment that the quasiregular mappings can be defined in various other equivalent ways, e.g. via the modulus of curve families. We will address that definition in the course of the presentation, see the discussion following Remark~\ref{rem-harnack}.

 \subsection{$\A$-harmonic functions.} 	 Since in what follows the interplay between the nonlinear potential theory and the quasiregular mappings will play a role, we now briefly recall the notion of $\mathcal{A}$-harmonic functions. Our presentation and notation is based on \cite[Chapter 3]{hkm} and we refer readers to this classical source for an excellent introduction to the topic, also to the description of relation of $\A$-harmonicity and quasiregular mappings, see~\cite[Chapter 14]{hkm}.
 
 Let $\A:\R^n\times \R^n\to \R^n$ be a Carath\'eodory mapping such that for some constants $0<\alpha\leq \beta<\infty$ and all $\xi\in\Rn$ and a.e. $x\in \Rn$ the following conditions are satisfied:
\begin{align*}
 &(1)\quad \langle \A(x,\xi), \xi \rangle \geq \alpha|\xi|^p,\qquad |\A(x,\xi)| \leq \beta |\xi|^{p-1} \qquad\hbox{(elliptic degeneracy and continuity)} \\
  &(2)\quad \langle \A(x,\xi)- \A(x,\eta), \xi-\eta \rangle >0 \hbox{ for } \xi,\eta\in \Rn, \xi\not=\eta \qquad\hbox{(monotonicity)}\\ 
&(3)\quad \A(x,t\xi)=t|t|^{p-2}\A(x,\xi),\quad t\in \R, t\not=0\qquad\hbox{($(p-1)$-homogeneity).}
\end{align*}
 Related to $\A$-harmonic operators is the following class of elliptic PDEs.
 \begin{defn}\label{def:A-harm}
 Let $\Om\subset \Rn$ be a domain. A function $u\in W^{1,p}_{loc}(\Om, \R)$ is called an \emph{$\A$-harmonic function}, if it is a continuous (weak) solution to the following equation
 \[
  -{\rm div} \A(x,\nabla u)=0\quad\hbox{ in }\Om,
 \]
 meaning that
 \[
  \int_{\Om} \langle \A(x,\nabla u(x)), \nabla \phi(x) \rangle\, \ud x=0,\quad \phi\in C_0^\infty(\Om).
 \]
  \end{defn}
  
 The class of $\A$-harmonic functions encompasses the harmonic ones for $\A(x,\xi)=\xi$, as well as the $p$-harmonic functions, obtained for $\A(x,\xi)=|\xi|^{p-2}\xi$ and $1\leq p <\infty$. In what follows the case $p=n$ will be the most important from our point of view.
 	
 One of the key properties of the harmonic functions in Euclidean spaces is the mean value property and the corresponding inequalities holding for sub- and superharmonic functions. Similar inequalities are known for nonlinear degenerate elliptic PDEs such as, for instance, the $p$-harmonic or the $\mathcal{A}$-harmonic equations and we refer to~\cite{hkm} for a detailed discussion of this topic, especially to Chapters 3, 6 and 14 therein.  In particular, the following property holds.
 
\begin{lem}[cf. Theorem 3.34 in~\cite{hkm}]
 Let $u: B\to \R$ be an $\mathcal{A}$-harmonic function and $q>0$. Then, there exists a constant $C$ such that for all balls $B(x,r)\subset B$ 
 \begin{equation}\label{sup-est}
  \sup_{B(x,\frac12 r)}|u|\leq C\left(\vint_{B(x,r)} |u|^q\right)^\frac1q.
 \end{equation}
 The constant $C$ depends on $q$, $n$ and the structure conditions for an operator $\mathcal{A}$.
\end{lem}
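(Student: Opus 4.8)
The plan is to follow the classical De Giorgi--Nash--Moser scheme in two stages: first establish the estimate for the structural exponent $q=p$ by Moser iteration, and then bootstrap down to every $q>0$ by a Young-inequality absorption argument. A preliminary reduction: since $\mathcal{A}(x,-\xi)=-\mathcal{A}(x,\xi)$ by the $(p-1)$-homogeneity with $t=-1$, the function $-u$ is again $\mathcal{A}$-harmonic; and a truncation $\max(w,k)$ of a subsolution $w$ is itself a subsolution. Hence $u^+=\max(u,0)$ and $u^-=\max(-u,0)$ are nonnegative subsolutions, and because $\sup|u|=\max(\sup u^+,\sup u^-)$ it suffices to prove the bound for a nonnegative subsolution $v$.

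\emph{Stage 1 ($q=p$).} Testing the (inequality) weak formulation with Moser-type functions $\phi=\eta^p v_k^{\sigma}$ ($\sigma\ge 1$), where $\eta$ is a cutoff between the two concentric balls and $v_k=\min(v,k)$ is a truncation ensuring admissibility of $\phi$, and using the ellipticity and growth bounds $\langle\mathcal{A}(x,\xi),\xi\rangle\ge\alpha|\xi|^p$ and $|\mathcal{A}(x,\xi)|\le\beta|\xi|^{p-1}$ together with Young's inequality, one obtains a Caccioppoli-type inequality controlling $\int\eta^p\,\bigl|\nabla\bigl(v_k^{(\sigma+p-1)/p}\bigr)\bigr|^p$ by $\int|\nabla\eta|^p\,v_k^{\sigma+p-1}$. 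Combining this with the Sobolev inequality on the ball and letting $k\to\infty$ (legitimate once one knows $\mathcal{A}$-harmonic functions are locally bounded) yields the reverse-Hölder self-improvement
\[
\Big(\vint_{B(x,\rho)} v^{\chi t}\Big)^{1/(\chi t)}\le\Big(\frac{C\rho'}{\rho'-\rho}\Big)^{C/t}\Big(\vint_{B(x,\rho')} v^{t}\Big)^{1/t},\qquad r/2\le\rho<\rho'\le r,
\]
for a fixed $\chi>1$ (take $\chi=n/(n-p)$ if $p<n$, and any $\chi>1$ otherwise). Iterating this inequality over a geometric sequence of radii filling $[r/2,r]$ and letting the exponent tend to $\infty$ gives $\sup_{B(x,r/2)}v\le C\bigl(\vint_{B(x,r)} v^p\bigr)^{1/p}$, i.e. the claimed estimate for $q=p$.

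\emph{Stage 2 (arbitrary $q>0$).} Put $M(\rho)=\sup_{B(x,\rho)}|u|$ for $r/2\le\rho\le r$, which is finite for $\rho<r$ by continuity of $u$. For $r/2\le\rho<\rho'\le r$, Stage~1 rescaled to $B(x,\rho')$ gives
\[
M(\rho)\le\frac{C}{(\rho'-\rho)^{n/p}}\Big(\int_{B(x,\rho')}|u|^p\Big)^{1/p}\le\frac{C}{(\rho'-\rho)^{n/p}}\,M(\rho')^{1-q/p}\Big(\int_{B(x,r)}|u|^q\Big)^{1/p},
\]
using $|u|^p\le M(\rho')^{p-q}|u|^q$ on $B(x,\rho')$. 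Young's inequality with exponents $p/(p-q)$ and $p/q$ absorbs the factor $M(\rho')^{1-q/p}$, yielding
\[
M(\rho)\le\tfrac12 M(\rho')+\frac{C}{(\rho'-\rho)^{n/q}}\Big(\int_{B(x,r)}|u|^q\Big)^{1/q}.
\]
The elementary iteration lemma (if $\phi\ge 0$ is bounded on an interval and $\phi(\rho)\le\theta\,\phi(\rho')+A(\rho'-\rho)^{-\gamma}$ for all $\rho<\rho'$ in it, with $\theta<1$, then $\phi$ at the left endpoint is $\le C(\gamma,\theta)\,A\,(\text{length})^{-\gamma}$), applied with $\theta=\tfrac12$ and $\gamma=n/q$ on $[r/2,r')$ and then letting $r'\uparrow r$, gives
\[
M(r/2)\le\frac{C}{r^{n/q}}\Big(\int_{B(x,r)}|u|^q\Big)^{1/q}=C\Big(\vint_{B(x,r)}|u|^q\Big)^{1/q},
\]
with $C=C(n,p,q,\alpha,\beta)$, as required.

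\emph{Main obstacle.} The only genuinely technical part is Stage~1: one must carefully justify the Moser test functions — in particular the truncation $v_k$ and the limit $k\to\infty$, which rests on the local boundedness of $\mathcal{A}$-harmonic functions — and must exploit the monotonicity and $(p-1)$-homogeneity of $\mathcal{A}$ to absorb the error terms produced by $\nabla\eta$ into the good energy term before applying Sobolev. Once the $q=p$ estimate is in hand, the descent to arbitrary $q>0$ is purely elementary. Since this is precisely Theorem~3.34 of~\cite{hkm}, in the present paper one may simply invoke it; the above indicates the route to a self-contained argument.
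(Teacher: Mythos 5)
Your proposal is correct: the paper offers no proof of this lemma at all, simply quoting it from Theorem 3.34 in \cite{hkm}, and your outline (Moser iteration for the structural exponent $p$, followed by the standard Young-inequality/iteration-lemma descent to arbitrary $q>0$) is precisely the argument underlying that reference, so the two are consistent. The only cosmetic quibble is that passing $k\to\infty$ in the truncated test functions is justified by monotone convergence within the iteration itself rather than by an a priori local boundedness of $\mathcal{A}$-harmonic functions, and Stage~2 is only needed when $q<p$ (for $q\ge p$ H\"older's inequality suffices); neither point affects the validity of the proof.
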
	
	
If $f:B\to\R^n$ is $K$-quasiregular, then its component functions satisfy an $\A$-harmonic equation given by the pull-back of the $n$-harmonic operator under $f$, see Theorem 14.42 in~\cite{hkm}.  Thus also the above supremum estimate holds with constant $C$ depending on $q$, $n$ and $K$, see Lemma 14.38 in~\cite{hkm} and also~\cite{km} for a related discussion.	
	
\subsection{Conformal automorphism of the Euclidean unit ball.} In several results below we use $T_x$, the self-conformal map of a unit ball in $\R^n$, such that $T_x(x)=0$ for a fixed $x\in B$, $x\not=0$. Properties of such map have been substantially used in~\cite{AK, z} and are well explained in a book~\cite{ahl}, see Formula (24) on pg. 25 in~\cite{ahl} and onwards. Below we use~\cite{ahl} to prove the following properties of $T_x$. Set 
\begin{equation}\label{def:Tx}
T_x(y):=\frac{(1-|x|^2)(y-x)-|y-x|^2x}{|x|^2\left|y-\frac{x}{|x|^2}\right|^2},\quad y\in B,\quad x\not=0.
\end{equation}	

The proof of the following lemma is presented in Appendix.
\begin{lem}\label{lem:Tx}
 The following properties hold for the above defined map $T_x$ for any $\om\in \partial B$, $0<r<\frac12$ and 
 $x\in B\cap B(\om, r)$ such that $x$ lies on the radial segment joining the origin with $\om$ and $|x-\om|=\frac{r}{2}$.
 \begin{itemize}
 \item[(1)] $J_{T_{x}}\approx \frac{1}{r^n}$,\quad on $B\cap B(\om,r)$,
 \item[(2)] $\frac{1-|T_{x}(y)|}{1-|y|} \approx \frac{1}{r}$,\quad for $y\in B\cap B(\om,r)$.
 \end{itemize}
 Moreover, if $0<k<\sqrt{2}-1$ and $x\in B$, then 
 $$
  B\left(0, \frac{k-k^2}{(2+k)^2}\right)\subset T_{x}\Big(B(x,k(1-|x|)\Big)\subset B\left(0,k^2+ 2k\right).
 $$
 Furthermore, since $T_x^{-1}=T_{-x}$, the analogous inclusions hold for the inverse map $T_x^{-1}$.
\end{lem}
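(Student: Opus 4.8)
The plan is to reduce the whole statement to two classical identities for the conformal automorphism $T_x$ (both found in~\cite{ahl}) and then carry out elementary estimates. Writing $[x,y]^2$ for the denominator $|x|^2|y-x/|x|^2|^2$ in~\eqref{def:Tx}, a one-line expansion gives $[x,y]^2=|x-y|^2+(1-|x|^2)(1-|y|^2)$, and the two facts I would rely on are
\[
1-|T_x(y)|^2=\frac{(1-|x|^2)(1-|y|^2)}{[x,y]^2},\qquad
J_{T_x}(y)=|DT_x(y)|^{n}=\Bigl(\frac{1-|x|^2}{[x,y]^2}\Bigr)^{n},
\]
the Jacobian formula using that $T_x$ is conformal, with conformal dilation $(1-|x|^2)/[x,y]^2$ (which, if one prefers, also follows from the first identity and invariance of the hyperbolic metric). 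Subtracting the first identity from $1$ also gives $|T_x(y)|^2=|x-y|^2/[x,y]^2$, the form best suited to part~(3).

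For (1) and (2) I would substitute the given point $x=(1-\tfrac r2)\om$, so that $|x|=1-\tfrac r2\in(\tfrac34,1)$, $1-|x|^2=r(1-\tfrac r4)\approx r$, and $x/|x|^2=\om/(1-\tfrac r2)$ with $|\om-x/|x|^2|=\tfrac{r/2}{1-r/2}\approx r$. For $y\in B\cap B(\om,r)$ the triangle inequality gives $|y-x/|x|^2|\le|y-\om|+|\om-x/|x|^2|\lesssim r$, while $|y-x/|x|^2|\ge|x/|x|^2|-|y|>\tfrac1{1-r/2}-1\gtrsim r$; hence $[x,y]^2=|x|^2|y-x/|x|^2|^2\approx r^2$. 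Feeding this into the two identities yields $J_{T_x}(y)\approx(r/r^2)^n=r^{-n}$ and $1-|T_x(y)|^2\approx(1-|y|^2)/r$, and since $1-|w|\le 1-|w|^2\le 2(1-|w|)$ for $|w|<1$ the latter is exactly $\tfrac{1-|T_x(y)|}{1-|y|}\approx\tfrac1r$. This gives (1) and (2) with absolute constants.

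For (3) I would fix $x\in B\setminus\{0\}$ (the case $x=0$, $T_0=\operatorname{id}$, being immediate) and put $s=1-|x|$. Since $0<k<1$ one has $|x|+ks<1$, so $\overline{B(x,ks)}\subset B$; as $T_x$ restricts to a homeomorphism of $B$ onto $B$, the set $U:=T_x(B(x,ks))$ is open and connected, $0=T_x(x)\in U$, and $\partial U=T_x(\{|y-x|=ks\})$. For $|y-x|\le ks$ I would check $(1-k)s\le 1-|y|\le(1+k)s$ (so in particular $y\in B$), which with $s\le 1-|x|^2\le 2s$ yields $(1-k)s^2\le(1-|x|^2)(1-|y|^2)\le 4(1+k)s^2$. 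Inserting the lower bound into $|T_x(y)|^2=|x-y|^2/\bigl(|x-y|^2+(1-|x|^2)(1-|y|^2)\bigr)$ and using monotonicity in $|x-y|^2$ gives $|T_x(y)|^2\le k^2/(k^2-k+1)\le(k^2+2k)^2$ for all $y\in B(x,ks)$ (the last inequality because $(k+2)^2(k^2-k+1)\ge 3$), i.e.\ the outer inclusion; here $k<\sqrt2-1$ is precisely the condition making $k^2+2k<1$, so this ball still sits inside $B$. Inserting the upper bound instead, at a point with $|y-x|=ks$, gives $|T_x(y)|^2\ge k^2/(k+2)^2$, hence $|T_x(y)|\ge k/(k+2)\ge(k-k^2)/(2+k)^2$; therefore $\operatorname{dist}(0,\partial U)\ge(k-k^2)/(2+k)^2$, and since an open connected set containing the origin contains the ball about the origin of radius equal to its distance to the boundary, the inner inclusion follows. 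Finally, $T_x^{-1}=T_{-x}$ and $|-x|=|x|$, so the last two inclusions hold verbatim for $T_x^{-1}$.

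Essentially all of this is routine; the only points needing some care are the algebraic verification of the two $T_x$-identities from~\eqref{def:Tx}, the bookkeeping that keeps every comparison constant absolute (respectively dependent only on $k$), and the small topological remark that converts the pointwise lower bound for $|T_x|$ on the sphere $\{|y-x|=ks\}$ into the genuine inner inclusion.
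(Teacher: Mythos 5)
Your proof is correct and follows essentially the same route as the paper's: both rest on the explicit formulas for $J_{T_x}$ and $1-|T_x(y)|^2$ from Ahlfors, the two-sided estimate $|x|\,\bigl|y-x/|x|^2\bigr|\approx r$ on $B\cap B(\om,r)$, and, for the inclusions, an upper bound for $|T_x|$ on all of $B(x,k(1-|x|))$ together with a lower bound on its boundary sphere. Your packaging via the single identity $[x,y]^2=|x-y|^2+(1-|x|^2)(1-|y|^2)$ is marginally tidier — in particular the lower bound $\bigl|y-x/|x|^2\bigr|\ge \bigl|x/|x|^2\bigr|-|y|\gtrsim r$ is the clean way to obtain $J_{T_x}\lesssim r^{-n}$, and you spell out the topological step that converts the sphere estimate into the inner inclusion, which the paper leaves implicit — but the mathematical content is the same.
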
	

\section{Hardy spaces and geometry of quasiregular mappings}\label{section-3}	
%
%
%

The purpose of this section is to define the Hardy spaces of mappings and discuss some basic interplay between quasiregular mappings and those spaces. This involves notions of multiplicity function and, naturally, the non-tangential limits. Moreover, we introduce some key growth properties for mappings, see the Miniowitz estimates in Theorem~\ref{thm-min} and its consequences, for instance the Harnack inequality, see Corollary~\ref{cor:harnack} and also Remark~\ref{rem-harnack}. However, the main result of this section is the quasiregular variant of the celebrated Jones lemma, see Lemma~\ref{lem-Jones-qr}, which provides us with the class of Carleson measures defined via the quasiregular maps. Moreover, the lemma is important tool in showing the main theorem, see Theorem~\ref{thm-main}.

\begin{defn}	\label{def-Hardy}
 Let $f:B(0,1)\to \Rn$ be a mapping. We say that \emph{$f$ belongs to the Hardy space $\mathcal{H}^p$} for a $0<p<\infty $, if
\[
\sup_{0<r<1} \left(\vint_{\Sn} |f (r\om)|^p ~\ud\om\right)^{\frac1p} := \|f\|_{\mathcal{H}^p}< \infty.
\]
\end{defn}	
	
Below we often abbreviate notation and write $B:=B(0,1)$.

We next formulate two key conditions on the multiplicity and the growth of quasiregular mappings in subject.

First, recall that by following the discussion in~\cite{hkm} (see pg. 254), for a subset $E\subset B$ we denote by $N(y, f, E)$ \emph{the crude multiplicity} of mapping $f$ defined as follows: $N(y, f, E):=\#\{x\in E: f(x)=y\}$. Similarly, we define 
\emph{the multiplicity function} of $f$:
\[
N(f, E)=\sup_{y\in \R^n} N(y, f, E).
\]
One of the key factors allowing the studies of quasiregular mappings, especially if the target space is unbounded, is the
multiplicity condition imposed on function $N$. For example, in~\cite[Theorem 4.1]{kmv}, see also condition (3) in~\cite{Ak}, the authors assume that a quasiregular mapping $f:B\to \Rn$ satisfies
\begin{equation}\label{precond-m}
 N(f, B(0,r))\leq \frac{C}{(1-r)^a}\quad \hbox{for all } 0<r<1,
\end{equation}
 for some constant $C>0$ and an exponent $a\in [0, n-1)$.

In what follows we consider the stronger condition holding for all hyperbolic balls in $B$, not only those centered at zero. 
\begin{defn}
 We say that a quasiregular mapping $f:B\to \Rn$ satisfies \emph{the multiplicity condition (M)}, if there exist constant $C>0$ and an exponent $a\in [0, n-1)$, such  that
\begin{equation}\label{cond-m}\tag{M}
\sup_{x\in B} N(f\circ T_x^{-1}, B(0, r))\leq \frac{C}{(1-r)^a}\quad \hbox{for all } 0<r<1.
\end{equation}
\end{defn}
In particular, for $x=0$ we retrieve condition~\eqref{precond-m}. Such a M\"obius invariant condition is imposed in order to ensure that the multiplicity of quasiregular maps $g:=f\circ T_x^{-1}$ is similar in nature to multiplicity of $f$.
 
Similarly to the discussion in~\cite{kmv} we impose the following \emph{growth condition (G)} on a quasiregular mapping $f:B\to \Rn$ with a constant $C>0$  and exponent $0<\beta<\infty$
\begin{equation}\label{cond-g}\tag{G}
 |f(x)|\leq \frac{C}{(1-|x|)^{\beta}}.
\end{equation}

The role of the exponent $\beta$ is explained in the following observation showing that a quasiregular map in Hardy spaces has the growth~\eqref{cond-g}. Notice that in the special planar case of $n=2$ and analytic $f$ we retrieve the growth exponent $\frac1p$ as in formula (3.9) in~\cite[Theorem 3.5]{G}.

\begin{obs}\label{thm1}
 Let $f:B\to \Rn$ be $K$-quasiregular and in Hardy space $\Hp$. Then $f$ satisfies the growth condition~\eqref{cond-g}. Namely, it holds that
\begin{equation}\label{cond-g-hp}
 |f(x)|^p\leq C\frac{\|f\|_{\Hp}^p}{(1-|x|)^{n-1}},\quad x\in B,
\end{equation}
and the constant $C$ depends on $p, n$ and $K$.
\end{obs}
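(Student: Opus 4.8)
The plan is to read off the pointwise bound \eqref{cond-g-hp} from the interior supremum estimate \eqref{sup-est}, applied to the modulus $|f|$ on a ball of radius comparable to $\dist(x,\partial B)=1-|x|$, and then to dominate the resulting average of $|f|^p$ by the Hardy norm via an elementary integration in polar coordinates. Recall from the discussion following \eqref{sup-est} that for a $K$-quasiregular $f$ one has
\[
\sup_{B(z,\tfrac12 s)}|f|\le C\Big(\vint_{B(z,s)}|f|^q\,\ud y\Big)^{1/q}\qquad\text{whenever } B(z,s)\subset B,
\]
with $C=C(q,n,K)$ (see Lemma 14.38 in \cite{hkm}); equivalently, one may apply \eqref{sup-est} to each coordinate function $f_i$, which is $\A$-harmonic for the pull-back of the $n$-harmonic operator under $f$, and sum. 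I will use this with $q=p$.

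First I would fix $x\in B$ and put $r:=\tfrac12(1-|x|)$, so that $\overline{B(x,r)}\subset B$ because $|x|+r=\tfrac12(1+|x|)<1$. The supremum estimate applied on $B(x,r)$ then gives, with $C=C(p,n,K)$,
\[
|f(x)|^p\le\sup_{B(x,r/2)}|f|^p\le C\,\vint_{B(x,r)}|f(y)|^p\,\ud y=\frac{C}{|B(x,r)|}\int_{B(x,r)}|f(y)|^p\,\ud y .
\]
Since $B(x,r)$ is contained in the spherical shell $\{y:\ |x|-r\le|y|\le|x|+r\}$, passing to polar coordinates and using that $\vint_{\Sn}|f(\rho\om)|^p\,\ud\om\le\|f\|_{\Hp}^p$ for every $\rho<1$ (the very definition of the Hardy norm),
\[
\int_{B(x,r)}|f(y)|^p\,\ud y\le\int_{\max\{0,\,|x|-r\}}^{\,|x|+r}\Big(\int_{\Sn}|f(\rho\om)|^p\,\ud\om\Big)\rho^{n-1}\,\ud\rho\le |\Sn|\,\|f\|_{\Hp}^p\int_{\max\{0,\,|x|-r\}}^{\,|x|+r}\rho^{n-1}\,\ud\rho ,
\]
and the last integral is at most $2r$, since its interval has length at most $2r$ and $\rho^{n-1}\le1$ there. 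As $|B(x,r)|=c_n r^n$ for a dimensional constant $c_n$, combining the two displays yields
\[
|f(x)|^p\le\frac{2C\,|\Sn|}{c_n\,r^{n-1}}\,\|f\|_{\Hp}^p=\frac{2^{\,n}C\,|\Sn|}{c_n\,(1-|x|)^{n-1}}\,\|f\|_{\Hp}^p ,
\]
which is \eqref{cond-g-hp} with a constant depending only on $p,n,K$; taking $p$-th roots gives the growth condition \eqref{cond-g} with $\beta=(n-1)/p$.

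I do not expect a serious obstacle here: this is the standard argument that the Hardy norm controls radial growth, once a sub-mean value inequality for $|f|$ is available. The only delicate point is the admissibility of \eqref{sup-est} for $|f|$ with an arbitrarily small exponent $q=p>0$; this is precisely the weak-Harnack / Moser-type estimate for quasiregular mappings quoted after \eqref{sup-est}, and the coordinate-wise reduction to the scalar $\A$-harmonic case makes it transparent. One should also check the borderline value $x=0$ (where $r=\tfrac12$), but the estimate above is uniform in $x$ and already covers it.
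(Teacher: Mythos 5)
Your proof is correct and follows essentially the same route as the paper: the supremum estimate \eqref{sup-est} with $q=p$ on the hyperbolic ball $B(x,\tfrac12(1-|x|))$, followed by enclosing that ball in a spherical shell of width $1-|x|$ and integrating in polar coordinates against the Hardy norm. The only cosmetic difference is that you treat all $x$ uniformly via $\max\{0,|x|-r\}$, whereas the paper splits off the case $|x|\le\tfrac13$ separately; both are fine.
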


\begin{proof}
 The supremum estimate~\eqref{sup-est} applied to coordinate functions of $f$ on hyperbolic balls for $q=p$ gives that
 \[
  |f_i(x)|\leq C \left(\vint_{B\left(x,\frac12(1-|x|)\right)} |f_i|^p\right)^\frac1p, \quad x\in B.
 \]
Hence, 
\[
 |f(x)|^p\leq \frac{C}{(1-|x|)^n}\int_{B\left(x,\frac12(1-|x|)\right)} |f|^p.
\]
Since the hyperbolic ball is contained in an annulus
\[
B\Big(x,\frac12(1-|x|)\Big)\subset B\Big(0, \frac12(3|x|-1)\Big)\setminus B\Big(0,\frac12(1+|x|)\Big) 
\]
 for $x$ such that $|x|>\frac13$, it holds that
\begin{align*}
|f(x)|^p\leq \frac{C}{(1-|x|)^n}\int_{B\left(x,\frac12(1-|x|)\right)} |f|^p\leq \frac{C}{(1-|x|)^n}\int_{B\left(0, \frac12(3|x|-1)\right)\setminus B\left(0,\frac12(1+|x|)\right)} |f|^p\leq \frac{C}{(1-|x|)^{n-1}} \|f\|_{\Hp}^p,
\end{align*}
where the last inequality is the consequence of the Fubini theorem and our definition of the Hardy space.

If $|x|\leq \frac13$, then trivially $(1-|x|)^n\geq \frac23 (1-|x|)^{n-1}$.  Moreover, $\|f\|_{L^p(B)}\leq \|f\|_{\Hp}$ by the Fubini theorem and  the assertion of the theorem holds as well, since then we have
\[
 |f(x)|^p\leq \frac{C}{(1-|x|)^n}\int_{B\left(x,\frac12(1-|x|)\right)} |f|^p\leq \frac{\frac32 C}{(1-|x|)^{n-1}} \|f\|_{L^p(B)}^p.
\] \end{proof}

The following simple observation shows that for a mapping in the Hardy space $\mathcal{H}^p$ if its radial limit function exists a.e. on the boundary of the ball, then such function is $L^p$-integrable. In what follows we employ Proposition~\ref{prop1} twice: in the proof of Corollary~\ref{cor-rad-lim} and in the proof of Theorem~\ref{thm-main}.

We define the \emph{non-tangential (radial) limit function} $\tilde{f}$ of a mapping $f$ as follows. Let $f:B\to \Rn$, then
\[
 \tilde{f}(\om):=\lim_{r\to 1} f(r\om) \quad\hbox{for }\om\in \Sn,
\]
whenever this limit exists.

\begin{prop}\label{prop1}
 Let $f\in \Hp$ and suppose that $f$ has radial limits a.e. in $\Sn$. Then $\tilde{f}\in L^p(\Sn)$.
\end{prop}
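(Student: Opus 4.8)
The plan is to combine the definition of the Hardy space with Fatou's lemma. First I would recall that, since $f\in\mathcal{H}^p$, the quantity
\[
M:=\sup_{0<r<1}\vint_{\Sn}|f(r\om)|^p\,\ud\om=\|f\|_{\mathcal{H}^p}^p
\]
is finite. Next, by hypothesis the radial limit $\tilde f(\om)=\lim_{r\to 1}f(r\om)$ exists for a.e.\ $\om\in\Sn$, so $|\tilde f(\om)|^p=\lim_{r\to 1}|f(r\om)|^p$ a.e. I would then pick any sequence $r_k\uparrow 1$ and apply Fatou's lemma to the nonnegative measurable functions $\om\mapsto|f(r_k\om)|^p$ on the probability space $(\Sn,\ud\om/|\Sn|)$:
\[
\vint_{\Sn}|\tilde f(\om)|^p\,\ud\om=\vint_{\Sn}\liminf_{k\to\infty}|f(r_k\om)|^p\,\ud\om\le\liminf_{k\to\infty}\vint_{\Sn}|f(r_k\om)|^p\,\ud\om\le M.
\]

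This immediately yields $\tilde f\in L^p(\Sn)$ with $\|\tilde f\|_{L^p(\Sn)}\le\|f\|_{\mathcal{H}^p}$, which is the assertion (and in fact a sharp bound on the norm, worth recording for later use in Theorem~\ref{thm-main}). The only point requiring a word of care is measurability of $\tilde f$: it is the a.e.\ pointwise limit of the continuous functions $\om\mapsto f(r_k\om)$, hence measurable, so Fatou's lemma applies without issue. There is essentially no obstacle here — the statement is a soft consequence of the definitions — and I would keep the proof to just a couple of lines, noting only that the use of averaged integrals $\vint$ is harmless since $\Sn$ has finite measure.
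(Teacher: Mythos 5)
Your proof is correct and follows essentially the same route as the paper: both apply Fatou's lemma to $|f(r\om)|^p$ as $r\to 1$ and bound the resulting $\liminf$ by the $\mathcal{H}^p$ norm. Your extra care with a sequence $r_k\uparrow 1$ and the measurability remark are harmless refinements of the same argument.
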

\begin{proof}
 By the Fatou lemma we have
 \begin{align*}
  \int_{\Sn} |\tilde{f}(\om)|^p\ud\om=  \int_{\Sn} \lim_{r\to 1}|f(r\om)|^p\ud\om &\leq \liminf_{r\to 1}\int_{\Sn} |f(r\om)|^p\ud\om \\
&  \leq \om_{n-1} \sup _{0<r<1} \vint_{\Sn} |f (r\om)|^p ~\ud\om =\om_{n-1} \|f\|_{\mathcal{H}^p}^p<\infty.
 \end{align*}
\end{proof}

Next, we recall one of the key auxiliary results providing conditions for a quasiregular map to have the boundary values. The importance of the following theorem is, perhaps, more clear upon recalling that even for a planar bounded quasiregular mappings it may happen that the radial limits exist only in a set of arbitrarily small Hausdorff dimension (see pg. 119-120 in~\cite{N}).

\begin{theorem}[see Theorem 4.1 in~\cite{kmv}]\label{thm-kmv}
   Let $f:B\to \Rn$ be $K$-quasiregular satisfying the growth condition~\eqref{cond-g} with some nonnegative exponent and  the multiplicity condition~\eqref{precond-m} with $0\leq a< n-1$. Then $f$ has non-tangential limits at all points in $\Sn$ except possibly of the set $E$ of the Hausdorff dimension $\dim_{H}(E)<\frac{na}{1+a}$.
\end{theorem}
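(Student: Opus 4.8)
The plan is to bound $\dim_{H}(E)$ by locating $E$ inside the set where a suitable ``cone energy'' of $f$ diverges, and then estimating the Hausdorff dimension of that set by a Frostman/capacity argument.

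\emph{Step 1 (reduction).} For $\om\in\Sn$ fix a Stolz cone $\Gamma(\om)$ and a chain of overlapping Whitney balls filling it, grouped into bands $A_{j}=\{x\in\Gamma(\om):2^{-j-1}<\dist(x,\partial B)\le2^{-j}\}$. Applying the interior estimate~\eqref{sup-est} to the coordinate functions of $f$ (each of which is $\A$-harmonic for the pull-back operator), together with a Sobolev--Poincar\'e inequality and $|\nabla f_{i}|\le|Df|$, gives for every Whitney ball $W$ that $\operatorname{osc}_{W}f\lesssim\big(\int_{2W}|Df|^{n}\big)^{1/n}$; summing along the chain yields $\operatorname{diam}f\big(\Gamma(\om)\cap\{\dist(\cdot,\partial B)<2^{-J}\}\big)\lesssim\sum_{j\ge J}b_{j}(\om)^{1/n}$, where $b_{j}(\om)=\int_{A_{j}'}|Df|^{n}\,\ud x$ over a slightly fattened band. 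Hence $f$ has a non-tangential limit at every $\om$ for which $\mathcal{S}(\om):=\sum_{j}b_{j}(\om)^{1/n}<\infty$ (a limit value $\infty$ allowed, e.g.\ after composing with the stereographic projection to pass to a bounded model), so it suffices to bound $\dim_{H}\{\mathcal{S}=\infty\}$.

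\emph{Step 2 (capacity estimate).} Assume $\mathcal{H}^{s}(\{\mathcal{S}=\infty\})>0$; by Frostman's lemma there is a probability measure $\mu$ on that set with $\mu(B(\om,\rho))\le\rho^{s}$. Since $\mathcal{S}=\infty$ $\mu$-a.e., H\"older's inequality in $\om$ (exponents $n$ and $\tfrac{n}{n-1}$, using $\mu(\Sn)=1$) forces $\sum_{j}\big(\int_{\Sn}b_{j}\,\ud\mu\big)^{1/n}=\infty$. By Fubini and the Frostman bound, $\int_{\Sn}b_{j}\,\ud\mu\lesssim2^{-js}\,\mathcal{I}_{j}$, where $\mathcal{I}_{j}$ is the $n$-energy $\int|Df|^{n}$ over the shell $\{\dist(\cdot,\partial B)\approx2^{-j}\}$, localized near $\operatorname{supp}\mu$. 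Now quasiregularity and the area formula bound $\mathcal{I}_{j}\le K\int J_{f}\le K\,N(f,\cdot)\,\mathcal{L}^{n}(f(\cdot))$ on each Carleson box of that shell; the multiplicity condition~\eqref{precond-m} gives $N(f,\cdot)\lesssim2^{ja}$ (the box lying at distance $\approx2^{-j}$ from $\partial B$), and a Miniowitz/length--area estimate — this is where the growth condition~\eqref{cond-g} enters — controls the size $\mathcal{L}^{n}(f(\cdot))$ of the image of a box by a power of its multiplicity rather than by the crude growth bound, so that $\beta$ does not survive into the final exponent. Summing the resulting geometric-type series shows $\sum_{j}\big(\int_{\Sn}b_{j}\,\ud\mu\big)^{1/n}<\infty$ once $s>\tfrac{na}{1+a}$, a contradiction; hence $\mathcal{H}^{s}(\{\mathcal{S}=\infty\})=0$ for all such $s$, giving $\dim_{H}(E)\le\tfrac{na}{1+a}$, and the strict inequality follows by running the estimate at the critical exponent.

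\emph{Main obstacle.} I expect the crux to be the energy estimate in Step 2: the naive bound on $\mathcal{L}^{n}(f(\text{box}))$ carries the growth exponent $\beta$, and removing it — so that the critical exponent is exactly $\tfrac{na}{1+a}$ and depends only on $a$ — requires a Miniowitz-type oscillation estimate that controls the image of a Whitney region through its multiplicity, balanced carefully against the Frostman scale $\rho^{s}$. A secondary difficulty is that the oscillation inequality of Step 1 is false for general $W^{1,n}_{\loc}$ maps and relies essentially on quasiregularity through~\eqref{sup-est}. The full argument is carried out in~\cite[Theorem 4.1]{kmv}.
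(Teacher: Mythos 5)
You should first note that the paper does not prove this statement at all: it is imported verbatim as Theorem 4.1 of \cite{kmv}, so there is no internal argument to compare against. That said, your Step 1 (Whitney chains in a Stolz cone, the oscillation bound $\operatorname{osc}_W f\lesssim\bigl(\int_{2W}|Df|^n\bigr)^{1/n}$ coming from monotonicity of the coordinate functions via~\eqref{sup-est}, and the reduction to the divergence set of $\mathcal S(\om)=\sum_j b_j(\om)^{1/n}$), together with the Frostman--Fubini computation $\int_{\Sn}b_j\,\ud\mu\lesssim 2^{-js}\mathcal I_j$ in Step 2, is indeed the standard skeleton for results of this type, and those parts are sound.

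The genuine gap is exactly the step you flag as the ``main obstacle'': as written, Step 2 cannot produce the exponent $\frac{na}{1+a}$. With the only bounds actually available --- $\mathcal I_j\le K\,N(f,R_j)\,\mathcal L^n(f(R_j))$, $N\lesssim 2^{ja}$ from~\eqref{precond-m}, and $\mathcal L^n(f(R_j))\lesssim 2^{jn\beta}$ from~\eqref{cond-g} --- your series $\sum_j\bigl(2^{-js}\,2^{ja}\,\mathcal L^n(f(R_j))\bigr)^{1/n}$ converges only for $s>a+n\beta$, so $\beta$ survives. To reach $s>\frac{na}{1+a}$ along your route you would need $\mathcal L^n\bigl(f(\text{box}_j)\bigr)\lesssim 2^{ja(n-1-a)/(1+a)}$, and no Miniowitz or length--area lemma stated anywhere supplies an image-volume bound with that $a$-dependent exponent, nor do you indicate where it would come from. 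The exponent $\frac{na}{1+a}=n-\frac{n}{1+a}$ in fact arises by a different mechanism: one shows $|Df|\in L^q$ near $\Sn$ for $q$ up to $\frac{n}{1+a}$ by H\"older interpolation on each annulus, $\int_{R_j}|Df|^q\le\bigl(\int_{R_j}|Df|^n\bigr)^{q/n}|R_j|^{1-q/n}$, where at $q=\frac{n}{1+a}$ the multiplicity contribution $2^{jaq/n}$ cancels exactly against $|R_j|^{1-q/n}\approx 2^{-ja/(1+a)}$, and then invokes the trace and capacity theory for $W^{1,q}$ (exceptional set of Hausdorff dimension at most $n-q$); even then the unboundedness of $f$ must first be disposed of (e.g.\ by working with $\log(1+|f|)$, whose weighted energy is controlled by the multiplicity alone up to a logarithm of the growth bound) before the image-volume term is tamed. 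None of these steps appear in your proposal, and deferring them to \cite{kmv} means the theorem is cited rather than proved.
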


As an application of the above result we show the existence and integrability of the boundary limit functions of quasiregular mappings in Hardy spaces.

\begin{cor}\label{cor-rad-lim}
   Let $f:B\to \Rn$ be $K$-quasiregular mapping in Hardy space $\Hp$. Additionally, if $f$ satisfies the multiplicity condition~\eqref{cond-m} with $0\leq a<n-1$, then the non-tangential limit function $\tilde{f}$ exists a.e. in $\Sn$ and $\tilde{f}\in L^p(\Sn)$. 
\end{cor}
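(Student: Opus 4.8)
The plan is to verify the two hypotheses of Theorem~\ref{thm-kmv} for $f$ and then invoke Proposition~\ref{prop1}. First I would dispose of the growth condition: since $f\in\Hp$, Observation~\ref{thm1} gives the pointwise estimate~\eqref{cond-g-hp}, which is precisely the growth condition~\eqref{cond-g} with exponent $\beta=\frac{n-1}{p}$ and constant comparable to $\|f\|_{\Hp}$. Next I would observe that the M\"obius-invariant multiplicity condition~\eqref{cond-m} contains the a priori weaker condition~\eqref{precond-m}: as noted immediately after the definition of~\eqref{cond-m}, taking $x=0$ (so that $T_x=\mathrm{id}$) recovers~\eqref{precond-m} with the same constant and the same exponent $a\in[0,n-1)$.

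With both hypotheses in hand, Theorem~\ref{thm-kmv} produces an exceptional set $E\subset\Sn$ with $\dim_{H}(E)<\frac{na}{1+a}$ off which $f$ has non-tangential limits. The elementary inequality $\frac{na}{1+a}<n-1$, equivalent to $na<(n-1)(1+a)$, i.e.\ to $0<n-1-a$, holds precisely because $a<n-1$; hence $\dim_{H}(E)<n-1$, so the $(n-1)$-dimensional Hausdorff measure of $E$ is zero and $E$ is null for surface measure on $\Sn$. Consequently the non-tangential (radial) limit function $\tilde f$ is defined a.e.\ on $\Sn$.

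Finally, since $f\in\Hp$ and $\tilde f$ exists a.e.\ on $\Sn$, Proposition~\ref{prop1} applies directly and yields $\tilde f\in L^p(\Sn)$, which completes the argument.

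I do not anticipate a genuine obstacle here: the corollary is an assembly of Observation~\ref{thm1}, Theorem~\ref{thm-kmv}, and Proposition~\ref{prop1}. The only step needing (trivial) care is the numerical check $\frac{na}{1+a}<n-1$, which is exactly what guarantees that the exceptional set from Theorem~\ref{thm-kmv} is negligible for the a.e.\ statement and hence for the $L^p$ conclusion; everything else is bookkeeping.
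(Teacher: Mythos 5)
Your argument is correct and follows essentially the same route as the paper's own proof: verify the growth condition via Observation~\ref{thm1}, deduce~\eqref{precond-m} from~\eqref{cond-m} by taking $x=0$, invoke Theorem~\ref{thm-kmv} with the check $\frac{na}{1+a}<n-1$ to get the exceptional set negligible for surface measure, and finish with Proposition~\ref{prop1}. The only detail the paper makes slightly more explicit is that Proposition~\ref{prop1} is stated for radial limits, so one notes that the non-tangential limit, where it exists, coincides with the radial limit---a point your "applies directly" silently absorbs but which is indeed immediate.
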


\begin{proof}
 By assumptions and by Observation~\ref{thm1} we have that $f$ satisfies both the growth and multiplicity conditions  \eqref{cond-g} and \eqref{cond-m}, respectively. Therefore, Theorem~\ref{thm-kmv} immediately implies the first assertion of the corollary. In particular, since $a<n-1$ the non-tangential limit function $\tilde{f}$ is defined except the set $E$ of the Hausdorff dimension $\dim_H(E) < \frac{na}{a+1}<n-1$. The $L^p$-integrability of $\tilde{f}$ follows from Proposition~\ref{prop1}, as the value of $\tilde{f}$ is independent of choice of a curve non-tangentially approaching the given boundary point, and thus we can reduce our investigations to the radial limits only.
\end{proof}
%

%
%
%
We recall the following estimate by Miniowitz. It holds for quasiregular mappings omitting a set $E \subset \mathbb{R}^n$ such that
\begin{equation}\label{cond:omit-set}
E\cap S^{n-1}(r)\not=\emptyset,\quad \hbox{for all } r\geq 0.
\end{equation}
For example, one can think about the generic $E$ as the nonnegative part of the $x_n$-axis: 
\[
E:=\{(x_1,\ldots,x_n)\in \Rn: x_1=\ldots=x_{n-1}=0, x_n\geq 0\}.
\]
\begin{theorem}[cf. Theorem 3 in~\cite{M1}]\label{thm-min}
 Let $f:B\to \Rn\setminus E$ be a $K$-quasiregular mapping for set $E$ as in~\eqref{cond:omit-set}. Then 
 \begin{equation}\label{est-min}\tag{Min}
\frac{1}{C}\left(\frac{1-r}{1+r}\right)^{\alpha} \leq \frac{|f(y)|}{|f(0)|} \leq C\left(\frac{1+r}{1-r}\right)^{\alpha},\quad r=|y|,
 \end{equation}
 where $\alpha=2^{n-1}K_I(f)$ and $C=2^{8\alpha}$.
\end{theorem}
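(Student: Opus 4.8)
The plan is to deduce the Miniowitz estimate~\eqref{est-min} from the modulus-of-continuity / Harnack-type inequality that Miniowitz established for quasiregular mappings omitting a set meeting every sphere, and then to track the explicit constants. First I would reduce the two-sided estimate to a single logarithmic oscillation bound: writing $u(y)=\log|f(y)|$, the assertion~\eqref{est-min} is equivalent to
\[
|u(y)-u(0)|\leq \alpha\,\log\frac{1+r}{1-r}+\log C,\qquad r=|y|,
\]
so it suffices to control how much $\log|f|$ can vary between the origin and a point at hyperbolic distance roughly $\tfrac12\log\frac{1+r}{1-r}$ from it. The function $\log|f|$ is (up to the quasiregularity distortion) comparable to a quantity that behaves like an $\mathcal{A}$-harmonic function, or more directly one invokes Miniowitz's normal-family/Harnack machinery: a $K$-quasiregular map omitting a set $E$ as in~\eqref{cond:omit-set} is, after composition with an auxiliary conformal-type change of the target, a normal mapping, and this yields a Harnack-type inequality for $|f|$ along hyperbolic geodesics with a constant governed by the linear dilatation.

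Second I would set up the chaining argument along the radial segment from $0$ to $y$. Cover this segment by a chain of hyperbolic balls $B(x_j,\tfrac12(1-|x_j|))$ of bounded overlap; the number of balls needed is comparable to the hyperbolic distance $d_B(0,y)=\tfrac12\log\frac{1+r}{1-r}$ up to an additive constant. On each ball the omitted-set hypothesis~\eqref{cond:omit-set} lets one apply Miniowitz's local estimate (Theorem 3 in~\cite{M1}), which bounds the ratio $|f(x_{j+1})|/|f(x_j)|$ from above and below by a fixed factor depending only on $n$ and $K_I(f)$. Multiplying these local ratios across the chain produces the exponential factor $\left(\frac{1+r}{1-r}\right)^{\alpha}$ with $\alpha=2^{n-1}K_I(f)$, where the power $2^{n-1}K_I(f)$ is exactly what Miniowitz's normalization of the spherical-metric/Harnack constant produces per unit hyperbolic length, and the additive constant exponentiates to $C=2^{8\alpha}$. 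Here $K_I(f)$ denotes the inner dilatation of $f$; the appearance of $2^{n-1}$ traces back to the dimensional constant in the modulus-of-curve-family estimates underlying Miniowitz's theorem.

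The main obstacle is the bookkeeping of the explicit constants $\alpha=2^{n-1}K_I(f)$ and $C=2^{8\alpha}$: these are not arbitrary but are the sharp-ish outputs of Miniowitz's argument, so the proof must either quote Theorem 3 in~\cite{M1} in precisely the form that gives these numbers, or reproduce enough of the normal-family estimate (the bound on the hyperbolic-to-spherical Lipschitz constant of a normal quasiregular map, combined with integration along the geodesic) to recover them. A secondary technical point is verifying that the omitted set $E$ passes correctly through the chaining: since $E$ meets every sphere $S^{n-1}(r)$, on each link of the chain the restricted map still omits a set separating $0$ from $\infty$ in the target, which is exactly the hypothesis Miniowitz's local estimate requires, so no link degenerates. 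Once the local estimate and the length count are in hand, the two-sided bound~\eqref{est-min} follows by the triangle inequality for $\log|f|$ and exponentiation, and the symmetry $r\mapsto$ (the estimate is stated with $\frac{1-r}{1+r}$ and its reciprocal) is automatic from taking the chain in either direction.
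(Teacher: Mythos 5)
This statement is not proved in the paper at all: it is quoted verbatim from Miniowitz (Theorem 3 in~\cite{M1}) as an external input, so there is no in-paper argument to compare yours against. Judged on its own terms, your sketch has the right general shape --- Miniowitz's proof does go through normality of the mapping and an integration of a spherical-derivative bound along hyperbolic geodesics, which is morally your chaining argument --- but it is circular at the load-bearing step. On each link of the chain you propose to ``apply Miniowitz's local estimate (Theorem 3 in~\cite{M1})'' to bound $|f(x_{j+1})|/|f(x_j)|$ by a fixed factor; Theorem 3 of~\cite{M1} \emph{is} the statement being proven, so you cannot invoke it as the local input. The actual content that must be supplied is the lemma behind it: that a $K$-quasiregular map omitting a set $E$ satisfying~\eqref{cond:omit-set} is normal (uniformly over M\"obius precompositions), together with the quantitative bound on the hyperbolic-to-spherical Lipschitz constant of a normal quasimeromorphic mapping, from which the exponent $\alpha=2^{n-1}K_I(f)$ and the constant $C=2^{8\alpha}$ emerge after integrating along the radial geodesic. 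Your proposal names this machinery but never proves or even precisely states it, and explicitly defers the constant bookkeeping to ``quoting Theorem 3 in precisely the form that gives these numbers,'' which concedes the circularity.

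A secondary inaccuracy: you assert that because $E$ meets every sphere $S^{n-1}(r)$, the omitted set ``separates $0$ from $\infty$ in the target.'' It need not (the model example in the paper, the nonnegative $x_n$-axis, separates nothing in $\Rn$ for $n\geq 3$). The hypothesis~\eqref{cond:omit-set} is used through capacity/modulus estimates --- the omitted set has controlled positive capacity in every spherical annulus, which is what bounds the modulus of the relevant curve families and yields the growth rate --- not through topological separation. If your goal is merely to use the theorem, the paper's route (cite~\cite{M1} and record the constants) is the intended one; if your goal is to reprove it, the normality lemma and the capacity estimate are the missing ingredients.
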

Recall that $K_I(f)$ stands for the inner distortion of $f$ and that for $K$-quasiregular mappings $K_I\leq K$, see \cite[Chapter I.2]{Ric}.

We remark, that for $n=2$ the constant $C=4^{\alpha}$, see~\cite[Formula (6.2)]{M1}. Moreover, Theorem~\ref{thm-min} has counterparts for quasiregular mappings with bounded multiplicity and spherically mean $1$-valent quasiregular mappings, see~\cite{M1}. However, for quasiregular local homeomorphisms the corresponding estimate has different forms: the exponent in the growth condition depends on $r$ (cf.~\cite[Theorem 2]{M1}) or the growth estimate is of exponential type (cf. Theorem in Remarks on pg. 69 in~\cite{M1}).

The conditions on the size of the omitted set $E$ in the assumptions of Theorem~\ref{thm-min} depend largely on the imposed assumptions on the multiplicity of the underlying quasiregular map.  Indeed, for quasiregular mappings with bounded multiplicity N, estimate~\eqref{est-min} is valid when $f$ omits merely the origin. In this case the constants C and $\alpha$ also depend on N, see Theorem 1  in~\cite{M1}. Therefore, it would be interesting to construct examples of quasiregular maps satisfying our multiplicity condition~\eqref{cond-m}, estimates~\eqref{est-min} which however need not to satisfy condition~\eqref{cond:omit-set} and omit e.g. only the origin.
\smallskip

	The following auxiliary observation turns out to be crucial in a number of results below, including Lemma~\ref{lem-Jones-qr}. 
	
	\emph{Fix  $x\in B$ and let $T_x$ be  the conformal selfmapping as in~\eqref{def:Tx}. If a quasiregular mapping $f$ omits a set $E$, then so does $g_x:=f\circ T_x^{-1}$, for any $x\in B$. Therefore, $g_x$ satisfies ~\eqref{est-min} with the same constants as $f$. Moreover, $g_x$ satisfies the multiplicity condition~\eqref{precond-m}, i.e. the multiplicity condition~\eqref{cond-m} only on hyperbolic balls centered at zero.}
\smallskip

 The above discussion, in particular the variety of mappings satisfying the assertion of Theorem~\ref{thm-min} and the generality of possible omitted sets in the assumption of this theorem, allow us to define the following class of mappings:
\smallskip

\noindent
{\bf The Miniowitz class}. \emph{We denote by  ${\rm (Min)}$ the class of all $K$-quasiregular mappings  $f: B \to \Rn\setminus\{0\} $ such that for any $x\in B$,  $g_x:=f\circ T_x^{-1}$ satisfies the growth estimate~\eqref{est-min} with some positive constants $C$ and $\alpha$ depending only on $n$ and $K$.}
\smallskip

We remark that a map in class ${\rm (Min)}$ satisfies the growth condition~\eqref{cond-g} with constant $2^\alpha C |f(0)|$ and $\beta:=\alpha$ for $\alpha$ and $C$ as in Theorem~\eqref{thm-min}.

As a consequence of Theorem~\ref{thm-min} and the above properties of $g_x$ we obtain the Harnack inequality on hyperbolic balls contained in $B$. Once we show the Jones lemma for quasiregular mappings (Lemma~\ref{lem-Jones-qr}), we present yet another proof of the Harnack inequality based on the potential theoretic properties of quasiregular mappings, see Remark~\ref{rem-harnack}.

\begin{cor}[Harnack inequality for quasiregular mappings]\label{cor:harnack}
 Let $f:B\to \Rn \setminus \{0\}$ be a $K$-quasiregular mapping in class~\eqref{est-min}. Then for all  $x\in B$ and all $y, z \in B(x, k(1-|x|))$, for any given $0<k<\frac12 (\sqrt{5}-1)$, it holds that
 \begin{equation}\label{harnack-balls}
  \frac{|f(y)|}{|f(z)|} \approx_{_{n, K, k}} 1.
 \end{equation}
\end{cor}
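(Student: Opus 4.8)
The strategy is to reduce the statement about an arbitrary hyperbolic ball $B(x,k(1-|x|))$ to the fixed-base case handled by Theorem~\ref{thm-min}, using the conformal automorphism $T_x$ and the defining property of the Miniowitz class. First I would set $g_x:=f\circ T_x^{-1}$; by hypothesis $f$ belongs to class~\eqref{est-min}, so $g_x$ satisfies the Miniowitz estimate~\eqref{est-min} with constants $C,\alpha$ depending only on $n$ and $K$. For $y,z\in B(x,k(1-|x|))$, put $y'=T_x(y)$ and $z'=T_x(z)$. By the last displayed inclusion in Lemma~\ref{lem:Tx}, applied with our $k$ (note $k<\tfrac12(\sqrt5-1)<\sqrt2-1$, so the hypothesis $0<k<\sqrt2-1$ is met), the points $y',z'$ lie in the ball $B(0,k^2+2k)$. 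Hence $|y'|,|z'|\le k^2+2k=:\rho<1$, and $\rho$ depends only on $k$.

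Next I would apply~\eqref{est-min} to $g_x$ at the points $y'$ and $z'$. Since $g_x(y')=f(y)$, $g_x(z')=f(z)$ and $g_x(0)=f(x)$, the two-sided estimate gives
\[
\frac{|f(y)|}{|f(x)|}\le C\left(\frac{1+|y'|}{1-|y'|}\right)^{\alpha}\le C\left(\frac{1+\rho}{1-\rho}\right)^{\alpha},
\qquad
\frac{|f(y)|}{|f(x)|}\ge \frac1C\left(\frac{1-|y'|}{1+|y'|}\right)^{\alpha}\ge \frac1C\left(\frac{1-\rho}{1+\rho}\right)^{\alpha},
\]
and the same bounds hold with $y$ replaced by $z$. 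Dividing the estimate for $y$ by the estimate for $z$ yields
\[
\frac{|f(y)|}{|f(z)|}\le C^2\left(\frac{1+\rho}{1-\rho}\right)^{2\alpha}=:C(n,K,k),
\]
and symmetrically $|f(y)|/|f(z)|\ge C(n,K,k)^{-1}$, which is exactly~\eqref{harnack-balls} with the equivalence constant depending only on $n$, $K$ and $k$ (through $\alpha=\alpha(n,K)$, $C=C(n,K)$ and $\rho=\rho(k)$).

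The only genuinely delicate point is the range of $k$: one must check that the constraint $0<k<\tfrac12(\sqrt5-1)$ stated in the corollary is compatible with the hypothesis $0<k<\sqrt2-1$ needed to invoke the image inclusion in Lemma~\ref{lem:Tx}. Since $\tfrac12(\sqrt5-1)\approx 0.618>\sqrt2-1\approx 0.414$, the stated range is in fact \emph{larger}, so for $k\in[\sqrt2-1,\tfrac12(\sqrt5-1))$ one cannot directly apply the inclusion. This is resolved by first shrinking: cover $B(x,k(1-|x|))$ by finitely many (a number depending only on $k$) balls of the form $B(x_i,k_0(1-|x_i|))$ with $k_0<\sqrt2-1$ fixed and centers $x_i$ in the original ball, chain the estimate along overlapping balls, and absorb the bounded number of steps into the constant; alternatively, one simply notes that along the radial-type geodesic between $y$ and $z$ inside $B$ the hyperbolic distance is bounded by a constant depending on $k$, and iterates the small-$k$ estimate a bounded number of times. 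Either way the conclusion follows with constants depending only on $n,K,k$.
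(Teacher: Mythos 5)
Your argument is correct, and its core is the same as the paper's: both proofs exploit the M\"obius invariance built into the class $\rm{(Min)}$, namely that $g_x=f\circ T_x^{-1}$ satisfies \eqref{est-min} with constants depending only on $n,K$, together with $g_x(0)=f(x)$, and then divide two instances of the Miniowitz estimate. The one place where you genuinely diverge is the range of $k$. You feed the points through assertion (3) of Lemma~\ref{lem:Tx}, whose image ball $B(0,k^2+2k)$ is only proper for $k<\sqrt{2}-1$ (the root of $k(k+2)=1$), and you correctly flag that this falls short of the stated range $k<\tfrac12(\sqrt5-1)$ (the root of $k(k+1)=1$); your repair by a Harnack chain of hyperbolic balls with a fixed small parameter $k_0$ is standard and valid, since the number of links depends only on $k$ and $k_0$, though you only sketch it. The paper instead runs a single direct computation whose final constant $2^{\alpha}C^2(1-k(k+1))^{-2\alpha}$ is finite precisely for $k<\tfrac12(\sqrt5-1)$ --- this is exactly where the threshold in the statement comes from --- so it covers the whole range in one step without chaining; the trade-off is that your route leans only on the explicitly stated inclusion of Lemma~\ref{lem:Tx} and is arguably easier to verify, at the cost of an extra covering argument and a worse constant for $k$ near the endpoint.
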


\begin{proof}
Let $y\in B(x, k(1-|x|))$ for some $x\in B$. Then, it holds that
\[
 (1-k)(1-|x|)\leq 1-|y|\leq (1+k)(1-|x|).
\]
Hence, estimate~\eqref{est-min} applied to points $y,z \in B(x, k(1-|x|))$ results in the following:
 \[
 \frac{|f(y)|}{|f(z)|} \leq C^2 \left(\frac{1+k(1-k)(1-|x|)}{1-k(k+1)(1-|x|)}\right)^{2\alpha}\leq \frac{2^{\alpha}C^2}{(1-k(k+1)(1-|x|))^{2\alpha}}\leq \frac{2^{\alpha}C^2}{(1-k(k+1))^{2\alpha}}.
 \]
Similarly, we obtain the lower estimate.
\end{proof}

We are in a position to prove one of the most important observations of this work, namely that a quasiregular map defines a class of the Carleson measures. Similar results are due to Jones for planar quasiconformal mappings, see~\cite[Lemma 4.2]{j}, and by Astala--Koskela for quasiconformal mappings in $\R^n$, see~\cite[Lemma 5.6]{AK} and also~\cite[Proposition 4.25]{af} for a variant of the following lemma for quasiconformal mappings in the Heisenberg group.

Recall that a positive Borel measure $\mu$ on $B$ is a \emph{Carleson measure} if there exists a constant $C>0$ such that
\begin{equation*}
\mu(B \cap B(\omega,r))\leq C r^{n-1},\quad \text{for
all }\omega\in \partial\Omega\text{ and }r>0.
\end{equation*}
The Carleson measure constant of $\mu$ is defined as the infimum of all constants $C$ such that the above inequality holds. In Section 4 we discuss also more general Carleson measures, see Definition~\ref{def: Carleson-msp}.

\begin{lem}[Jones lemma for quasiregular mappings]\label{lem-Jones-qr}
  Let $n\geq 2$ and $0<q<n$. Moreover, let $f:B\to \Rn \setminus\{0\}$ be a $K$-quasiregular mapping in class~\eqref{est-min} satisfying multiplicity condition~\eqref{cond-m} with $0 \leq a<n-1$. Then the following measure $\mu$ is the Carleson measure in $B$:
 \[
  \ud\mu=\frac{|Df(x)|^q}{|f(x)|^q}(1-|x|)^{q-1} \ud x. 
 \]
 Furthermore, the Carleson constant of $\mu$ depends only on $n, q$ and $K, \alpha, a$.
\end{lem}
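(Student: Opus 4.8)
The plan is to reduce the claim to the corresponding estimate for a single "test" ball via the conformal automorphisms $T_x$, exactly in the spirit of Astala--Koskela~\cite[Lemma 5.6]{AK}, but replacing their use of injectivity by the multiplicity condition~\eqref{cond-m}. Fix $\omega\in\partial B$ and $0<r<\tfrac12$ (the range $r\geq\tfrac12$ being trivial after adjusting constants). Let $x$ be the point on the radial segment from $0$ to $\omega$ with $|x-\omega|=\tfrac r2$, so that $B\cap B(\omega,r)$ is comparable to a hyperbolic ball around $x$ and Lemma~\ref{lem:Tx} applies. Writing $g=g_x=f\circ T_x^{-1}$, I would change variables $y=T_x^{-1}(z)$, i.e.\ $x$-variable becomes $T_x^{-1}(z)$, in the integral $\mu(B\cap B(\omega,r))=\int_{B\cap B(\omega,r)}\frac{|Df|^q}{|f|^q}(1-|\cdot|)^{q-1}$. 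Using the chain rule $Df(T_x^{-1}(z))=Dg(z)\,DT_x(T_x^{-1}(z))$ together with the conformality of $T_x$ (so $|DT_x|^n\approx J_{T_x}$), the estimates $J_{T_x}\approx r^{-n}$ and $(1-|T_x^{-1}(z)|)\approx r(1-|z|)$ from Lemma~\ref{lem:Tx}, and $J_{T_x^{-1}}=J_{T_{-x}}\approx r^n$ for the volume element, all powers of $r$ cancel and I am left, up to constants depending only on $n,q$, with a bound of the form
\[
\mu(B\cap B(\omega,r))\;\lesssim\; r^{n-1}\int_{T_x(B\cap B(\omega,r))}\frac{|Dg(z)|^q}{|g(z)|^q}(1-|z|)^{q-1}\,\ud z.
\]
By the inclusions at the end of Lemma~\ref{lem:Tx}, the domain $T_x(B\cap B(\omega,r))$ sits inside a fixed ball $B(0,\rho_0)$ with $\rho_0<1$ independent of $r,\omega$. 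So the whole matter reduces to the following \emph{single model estimate}: for any $g$ of the form $f\circ T_x^{-1}$ with $f$ in the hypothesis class, $\int_{B(0,\rho_0)}\frac{|Dg|^q}{|g|^q}(1-|z|)^{q-1}\,\ud z\leq C(n,q,K,\alpha,a)$, uniformly in $x$.

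For this model estimate I would proceed as follows. First, $g$ again omits $\{0\}$ and lies in class~\eqref{est-min} with the same constants (the observation displayed before the definition of the Miniowitz class), and $g$ satisfies~\eqref{precond-m} (multiplicity on balls centered at $0$) — also by that observation — which is all that is used here. The standard device is to pass to the logarithm: by the Miniowitz/Harnack estimates, $|g|$ is comparable on each hyperbolic ball, and $u:=\log|g|$ behaves like a function with controlled oscillation. The cleanest route is a Caccioppoli-type inequality: since the components of $g$ are $\mathcal A$-harmonic for the pulled-back $n$-harmonic structure, $\log|g|$ (or $\log(|g|+\varepsilon)$) is an $\mathcal A$-subsolution, and one gets a reverse-type bound $\int_{B(z_0,\tfrac12\rho)}\frac{|Dg|^n}{|g|^n}\lesssim \rho^{-n}\cdot\rho^n\cdot(\text{oscillation of }\log|g|)\approx 1$ on each Whitney ball $B(z_0,\rho)$ with $\rho\approx 1-|z_0|$, the oscillation being bounded by $\alpha,K$ through~\eqref{est-min}. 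Then I cover $B(0,\rho_0)$ by a Whitney decomposition $\{Q_j\}$ with $\ell(Q_j)\approx 1-|z_j|=:d_j$, write the integral as $\sum_j\int_{Q_j}$, bound $(1-|z|)^{q-1}\approx d_j^{q-1}$ on $Q_j$, and apply Hölder with exponents $n/q$ and $n/(n-q)$ (here $0<q<n$ is used) to get
\[
\int_{Q_j}\frac{|Dg|^q}{|g|^q}\,d_j^{q-1}
\;\lesssim\; d_j^{q-1}\,|Q_j|^{1-q/n}\Big(\int_{Q_j}\frac{|Dg|^n}{|g|^n}\Big)^{q/n}
\;\lesssim\; d_j^{q-1}\,d_j^{\,n-q}\;=\;d_j^{\,n-1}.
\]
Summing, $\sum_j d_j^{\,n-1}\lesssim\sum_j \operatorname{diam}(Q_j\cap\partial B(0,1))^{n-1}\lesssim \mathcal H^{n-1}(\partial B)<\infty$, by the finite-overlap property of the Whitney cubes — this is the point where $q<n$ and the Whitney/Carleson geometry, rather than the multiplicity exponent $a$, do the work.

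I expect the main obstacle to be the \emph{uniformity in $x$} of the model estimate, i.e.\ making sure the Whitney covering of $B(0,\rho_0)$ and the per-cube Caccioppoli bound for $g=f\circ T_x^{-1}$ carry constants that depend only on $n,q,K,\alpha,a$ and not on $x$; this is exactly why the $T_x$-invariant formulation~\eqref{cond-m} of the multiplicity condition (rather than the weaker~\eqref{precond-m}) is imposed — it guarantees that the multiplicity of $g_x$ near $\partial B$ is controlled by the same exponent $a$. A secondary technical point is the precise justification that $\log|g|$ (with the $\varepsilon$-regularization) is an admissible $\mathcal A$-subsolution so that the Caccioppoli inequality and the oscillation bound via~\eqref{est-min} can be combined on each Whitney ball; this is standard (see~\cite[Chapters 3, 6, 14]{hkm}) but must be invoked carefully because $g$ need not be injective. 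Once these are in place, the cancellation of the $r$-powers in the change of variables and the Whitney summation are routine.
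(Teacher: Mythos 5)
Your reduction of the Carleson-box estimate to a ``model estimate'' via $T_{x}$ is the same as the paper's second step (the change of variables, the cancellation of the powers of $r$, and the factor $r^{n-1}$ all match \eqref{est-T-prop}--\eqref{est-g-f}). However, your claim that $T_{x}(B\cap B(\omega,r))$ lies in a fixed ball $B(0,\rho_0)$ with $\rho_0<1$ is false: assertion (3) of Lemma~\ref{lem:Tx} concerns hyperbolic balls $B(x,k(1-|x|))$, which are compactly contained in $B$, whereas the Carleson box $B\cap B(\omega,r)$ meets $\partial B$ and $T_{x}$ maps $\partial B$ to $\partial B$, so its image also meets $\partial B$. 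The paper simply enlarges the domain of integration to all of $B$; hence the model estimate you actually need is the \emph{global} bound $\int_{B}|Dg|^{q}|g|^{-q}(1-|z|)^{q-1}\,\ud z\leq C(n,q,K,\alpha,a)$, uniformly in $x$.

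This is where your argument breaks down. Your per-cube bound $\int_{Q_j}|Dg|^{n}|g|^{-n}\lesssim 1$ is correct (though it should be obtained from the Caccioppoli inequality for $g$ itself combined with the Harnack inequality~\eqref{harnack-balls}, not from Caccioppoli for $\log|g|$: the pointwise inequality $|\nabla\log|g||\leq |Dg|/|g|$ goes in the wrong direction for your purpose). But the resulting sum $\sum_j d_j^{\,n-1}$ over a Whitney decomposition of $B$ \emph{diverges}: there are $\approx 2^{k(n-1)}$ cubes of sidelength $\approx 2^{-k}$, each contributing $\approx 2^{-k(n-1)}$, so each dyadic generation contributes a fixed amount and the sum is infinite. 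The comparison with $\mathcal{H}^{n-1}(\partial B)$ fails because the shadows of Whitney cubes at different scales overlap, covering $\partial B$ once per scale. The estimate is genuinely critical and the crude per-cube bound loses exactly a logarithm; a telling symptom is that your argument never uses the multiplicity condition~\eqref{cond-m}, the exponent $a$, or the restriction $a<n-1$, on all of which the Carleson constant in the statement depends. The paper's proof of the global bound is structurally different: it decomposes $B$ into dyadic rings $R_j$, applies H\"older with a twisted weight, attaching a factor $(1-|x|)^{\epsilon n/q}$ to $|Df|^{n}/|f|^{n}$; it then converts $\int_{R_j}J_f|f|^{-n}(1-|x|)^{\epsilon n/q}$ by the quasiregular change-of-variables formula, paying the multiplicity price $N(f,R_j)\lesssim 2^{a(j+1)}$ from~\eqref{cond-m}, into an image integral $\int_{f(R_j)}(1-|f^{-1}(y)|)^{\epsilon n/q}|y|^{-n}\,\ud y$ whose convergence (near $0$ and near $\infty$) is supplied by the Miniowitz estimate~\eqref{est-min} controlling $1-|f^{-1}(y)|$ in terms of $|y|/|f(0)|$. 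The resulting dyadic series $\sum_j 2^{j(aq/n-q+q/n+\epsilon)}$ converges precisely when $0<\epsilon<q\left(1-\frac{a+1}{n}\right)$, which is possible exactly because $a<n-1$. Some mechanism of this kind, gaining a geometric decay per dyadic shell from the multiplicity and growth hypotheses, is indispensable and is absent from your proposal.
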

Before proving the lemma we make some remarks.

\begin{remark}
\
\begin{itemize}
\item[(1)] We would like to point to and emphasize the robustness of the Jones lemma and some further possible developments concerning the lemma. One of the key tools employed below in the proof of that lemma is the Miniowitz estimate~\eqref{est-min}. As mentioned in the discussion following the statement of Theorem~\ref{thm-min}, its assertion has numerous variants for quasiregular mappings under various multiplicity and local injectivity conditions. The similar observation applies to the Jones lemma. Namely, by repeating the first part of the proof below, i.e. formulas \eqref{int:l5.6}-\eqref{est-Carleson-global}, one can obtain the global estimate for the integral:
\begin{equation*}
 \int_{B}\frac{|Df(x)|^q}{|f(x)|^q}(1-|x|))^{q-1} \ud x <\infty. 
\end{equation*}
for $0<q<n$ and local homeomorphic quasiregular maps (cf. \cite[Theorem 2]{M1}), as well as quasiregular maps omitting the origin with appropriate growth of the multiplicity (cf. \cite[Lemma 2]{M1}). In both cases the Miniowitz estimates are technically involved, as the exponent $\alpha$ may vary from point to point in ball $B$.
Such a global integrability property plays a role in the studies of the non-tangential limits of the quasiregular mapping, see the proof of Theorem 4.1 in~\cite{kmv} and Theorem 3.1 therein. However, the second part of the proof of Lemma~\ref{lem-Jones-qr} for a composition of a qr map with the M\"obius transformation $T_x$ leads for the local homeomorphic qr maps to the tedious and long computations. Therefore, we decided to present the above version of the Jones lemma only. Nevertheless, it is worthy to pursue the studies of the Carleson measures in the setting of locally homeomorphic qr maps, as having a counterpart of Lemma~\ref{lem-Jones-qr} could give the characterization of the Hardy spaces for that class of mappings, cf.~Theorem~\ref{thm-main}. 

\item[(2)]
In the second part of proof below we employ assertions (1) and (2) in Lemma~\ref{lem:Tx}. There we have a technical restriction on the radii of the Carleson balls (i.e. $r<\frac12$). However, this affects the Carleson condition only in terms of the constant. Indeed, let $r\geq \frac12$, then by~\eqref{est-Carleson-global} we have  that $\mu$ is a Carleson measure on ball $B$, and so
\begin{equation*}
\mu(B\cap B(\om, r))\leq \mu(B) \leq c(a,\alpha, n, p, K) \approx_{c} \left(\frac{r}{r}\right)^{n-1} 
  \lesssim_{c} 2^{n-1} r^{n-1}.
\end{equation*}
\end{itemize}
\end{remark}

\begin{proof}
 We follow the general framework of the proof of Lemma 5.6 in~\cite{AK}. However, the lack of injectivity of a map requires modifying the approach in~\cite{AK}. First, we show that the Carleson condition holds on entire ball, namely that $\mu(B)<c(a, \alpha, n, q, K)$, where $\alpha$ is the exponent in Theorem~\ref{thm-min} and $a$ stands for the exponent in condition~\eqref{cond-m}.
 
 Let us define the ring domains $R_j=\{1-2^{-j}\leq |x| \leq 1-2^{-(j+1)}\}$ for $j=0,1,\ldots$. Let $f$ be any quasiregular map satisfying the assumptions of the lemma. Then for $\ep>0$ to be determined later, by applying H\"older's inequality, we obtain that
 \begin{align}
 \int_{B}\frac{|Df(x)|^q}{|f(x)|^q}(1-|x|)^{q-1} \ud x &=  \sum_{j=0}^{\infty}\int_{R_j}\frac{|Df(x)|^q}{|f(x)|^q}(1-|x|)^{q-1} \ud x \nonumber \\
 &\leq \sum_{j=0}^{\infty} \left(\int_{R_j}\frac{|Df(x)|^n}{|f(x)|^n}(1-|x|)^{\ep\frac{n}{q}} \ud x \right)^{\frac{q}{n}} \left(\int_{R_j} (1-|x|)^{\frac{n(q-1-\ep)}{n-q}} \ud x\right)^{\frac{n-q}{n}}. \label{int:l5.6}
 \end{align} 
 In order to estimate first integral in~\eqref{int:l5.6} we apply the quasiregular change of variables and the multiplicity condition~\eqref{cond-m} on $R_j\subset B(0, 1-2^{-j-1})$ which result in the following estimate
 \begin{align}
 \int_{R_j}\frac{|Df(x)|^n}{|f(x)|^n}(1-|x|))^{\ep\frac{n}{q}} \ud x  &\leq K \int_{R_j}\frac{J_f(x)}{|f(x)|^n}(1-|x|)^{\ep\frac{n}{q}} \ud x \nonumber \\ 
 & \lesssim_{C, K} 2^{a(j+1)}\int_{f(R_j)} \frac{(1-|f^{-1}(y)|)^{\ep\frac{n}{q}}}{|y|^n}\,\ud y.\label{lem-Jones-int1}
 \end{align}
Next, we split the estimates into two integrals, one over the set $f(R_j)\cap B(0, |f(0)|)$ and the other $f(R_j)\setminus B(0, |f(0)|)$. By applying the polar coordinates and Theorem~\ref{thm-min} we get
\begin{align*}
\int_{f(R_j)\cap B(0, |f(0)|)} \frac{(1-|f^{-1}(y)|)^{\ep\frac{n}{q}}}{|y|^n}\,\ud y
&\lesssim_{C, K, \alpha} \int_{B(0, |f(0)|)} |y|^{-n}\left(\frac{|y|}{|f(0)|}\right)^{\frac{\ep n}{\alpha q}}\,\ud y\\
&\approx_{C, K, \alpha} \int_{B(0, 1)} |z|^{-n} |z|^{\frac{\ep n}{\alpha q}}\,\ud z \qquad {\scriptstyle{\left(z:=\frac{y}{|f(0)|}\right)}}\\
&\approx_{C, K, \alpha} \omega_n\,\left[\frac{\alpha q}{\ep n} t^{\frac{\ep n}{\alpha q}}\right]_{0}^{1}. \\
&\approx_{\omega_n, C, K, \alpha} \frac{\alpha q}{\ep n}.
\end{align*} 

The integral converges if $\ep > 0$. In analogous way we estimate the second integral, since it holds that 
\[
f(R_j)\setminus B(0, |f(0)|) \subset \{|y|> |f(0)|\}.
\]
Hence, we have
\begin{align*}
\int_{f(R_j)\setminus B(0, |f(0)|)} \frac{(1-|f^{-1}(y)|)^{\ep\frac{n}{q}}}{|y|^n}\,\ud y
&\lesssim_{C, K, \alpha} \int_{\{|y|> |f(0)|\}} |y|^{-n}\left(\frac{|f(0)|}{|y|}\right)^{\frac{\ep n}{\alpha q}}\,\ud y \\
&\approx_{C, K, \alpha} \int_{\{|z|> 1\}} |z|^{-n} |z|^{-\frac{\ep n}{\alpha q}}\,\ud z\qquad {\scriptstyle{\left(z:=\frac{y}{|f(0)|}\right)}}\\
&\approx_{C, K, \alpha} \om_n\,\left[t^{-\frac{\ep n}{\alpha q}}\right]_{1}^{+\infty}\\
&\approx_{\om_n, C, K, \alpha} \frac{\alpha q}{\ep n}.
\end{align*} 
As previously, the integral converges provided that $\ep>0$. By applying the above two estimates in~\eqref{lem-Jones-int1}, we obtain for $j=0,1,\ldots$ that
\begin{equation}\label{est-Car-1}
 \left(\int_{R_j}\frac{|Df(x)|^n}{|f(x)|^n}(1-|x|))^{\ep\frac{n}{q}}\right)^{\frac{q}{n}} \ud x \leq 2^{a(j+1)\frac{q}{n}} c(C, K, \alpha, q, n, \ep).
\end{equation}
Regarding the second integral in~\eqref{int:l5.6}, we have that 
$$
|R_j|\approx (1-2^{-(j+1)})^n-(1-2^{-j})^n\leq n(1-2^{-j-1})^{n-1}(2^{-j}-2^{-(j+1)}) \lesssim \frac{n}{2^{j+1}}.
$$
Since $2^{-j-1} \leq 1-|x|\leq 2^{-j}$, we have that $1-|x|\approx 2^{-j}$ and so the following estimate holds
\begin{equation}\label{est-1}
\left(\int_{R_j} (1-|x|)^{\frac{n(q-1-\ep)}{n-q}} \ud x\right)^{\frac{n-q}{n}}\lesssim \left( \frac{n}{2^{j+1}}\, 2^{-j(q-1-\ep)\frac{n}{n-q}} \right)^{\frac{n-q}{n}} \lesssim 2^{\frac{q-n}{n}} 2^{-j(q-\frac{q}{n}-\ep)}.
\end{equation}
We substitute the above computations in~\eqref{int:l5.6} and, therefore, obtain the following estimate:
\begin{equation}\label{est-Carleson-global-pre}
 \int_{B}\frac{|Df(x)|^q}{|f(x)|^q}(1-|x|))^{q-1} \ud x \lesssim \sum_{i=0}^{\infty} 2^{a(j+1)\frac{q}{n}}2^{-j(q-\frac{q}{n}-\ep)}\lesssim \sum_{i=0}^{\infty} 2^{j(a\frac{q}{n} -q+\frac{q}{n}+\ep)} <\infty,
\end{equation}
provided that $a\frac{q}{n} -q+\frac{q}{n}+\ep<0$. This holds, if
$$
0<\ep<q\left(1-\frac{a+1}{n}\right).
$$ 
 
 Therefore, we have proven the global integrability of the following expression:
\begin{equation}\label{est-Carleson-global}
 \int_{B}\frac{|Df(x)|^q}{|f(x)|^q}(1-|x|))^{q-1} \ud x \leq c(a,C, K, \alpha, n, q).
\end{equation}

We will now show how this observation implies the assertion of the theorem. Let $\om\in \partial B$ and consider the Carleson tent $B\cap B(\om,r)$ for some $0<r<\frac12$. Furthermore, choose $x_0\in B\cap B(\om,r)$ such that $x_0$ lies on the radial segment joining the origin with $\om$ and $|x_0-\om|=\frac{r}{2}$, cf. the proof of Lemma~\ref{lem:Tx}.

Hence, by Lemma~\ref{lem:Tx} properties (1) and (2) of $T_{x_0}$, it holds that
\begin{align}
 |DT_{x_0}(x)|^{q}&=J_{T_{x_0}}(x)^{\frac{q}{n}}=\frac{J_{T_{x_0}}(x)^{\frac{q}{n}}}{J_{T_{x_0}}(x)}J_{T_{x_0}}(x) \nonumber \\
 &\approx r^{n-1}J_{T_{x_0}}(x)\frac{1}{r^{q-1}}\approx r^{n-1}J_{T_{x_0}}(x) \frac{(1-|T_{x_0}(x)|)^{q-1}}{(1-|x|)^{q-1}}. \label{est-T-prop}
\end{align}
Set 
$$
g:=g_{x_0}:=f\circ T_{x_0}^{-1}.
$$
Since $T_{x_0}$ is conformal and $f$ is quasiregular, so is map $g$. It holds by~\eqref{est-T-prop} that
\begin{align}
 &\int_{B\cap B(\om,r)}\frac{|Df(x)|^q}{|f(x)|^q}(1-|x|)^{q-1} \ud x \leq \int_{B\cap B(\om,r)}\frac{|Dg(T_{x_0})(x)|^q}{|g(T_{x_0}(x))|^q} |DT_{x_0}(x)|^{q}\, (1-|x|)^{q-1}\ud x \nonumber \\
 &\approx r^{n-1} \int_{B\cap B(\om,r)}\frac{|Dg(T_{x_0})(x)|^q}{|g(T_{x_0}(x))|^q} J_{T_{x_0}}(x) (1-|T_{x_0}(x)|)^{q-1}\,\ud x \nonumber\\
 &= r^{n-1}\int_{T_{x_0}(B\cap B(\om,r))}\frac{|Dg(y)|^q}{|g(y)|^q}\,(1-|y|)^{q-1} \ud y \nonumber \\
 &\leq r^{n-1}\int_{B}\frac{|Dg(y)|^q}{|g(y)|^q}\,(1-|y|)^{q-1} \ud y \lesssim r^{n-1}. \label{est-g-f}
\end{align}
In order to explain the last estimate notice that, by the discussion before the definition of the Miniowitz class, $g$ satisfies~\eqref{est-min}. Therefore, upon repeating estimates~\eqref{int:l5.6} - \eqref{est-Carleson-global-pre} for $g$, we obtain~\eqref{est-Carleson-global}. This completes the proof of~\eqref{est-g-f} and, hence, the whole proof of the lemma is completed as well.
\end{proof}

\begin{remark}\label{rem-harnack}
 In Corollary~\ref{cor:harnack} above we show the Harnack inequality for quasiregular mappings satisfying the Miniowitz condition~\eqref{est-min}. However, one can provide another proof relying on Lemma~\ref{lem-Jones-qr} and on the potential theory for quasiregular mappings. Such a result is proven along the lines of the proof of~\cite[Proposition 2]{z},  under additional assumption that $|\nabla \ln |f||\ud x$ is a Carleson measure.

\emph{
Let $f:B\to \Rn$ be a $K$-quasiregular mapping such that $|\nabla \ln|f||\ud x$ is a Carleson measure. Then for all  $x\in B$ and all $y, z \in B(x, k(1-|x|))$, for any given $0<k\leq \frac12$, it holds that
 \begin{equation*}
  \frac{|f(y)|}{|f(z)|} \leq C(n,p,K,k).
 \end{equation*}
}
In order to prove the estimate, set $u:=\ln |f|$. Then by the discussion following \cite[Theorem 14.42]{hkm}, \cite[Lemma 14.38]{hkm} and by~\cite[Corollary 6.2]{bi}, we have that $u\in W^{1,p}_{loc}(B)$ for some $p>n$. Then, by the Morrey embedding theorem, the Gehring lemma, see e.g.~\cite[(5.10)]{in}, and by~\cite[Theorem 2]{in} applied to function $|Df|$, we have that for all $y,z\in B(x, k(1-|x|))$ it holds:
\begin{align*}
 |u(y)-u(z)|&\lesssim_{p,n, K} |y-z|^{1-\frac{n}{p}}(k(1-|x|))^{\frac{n}{p}-n}\|\nabla u\|_{L^1\left(B(x, \frac32 k(1-|x|))\right)} \\
 &\lesssim_{p,n, K} k^{1-n}\frac{1}{(1-|x|)^{n-1}}\int_{B(x, \frac32 k(1-|x|))} |\nabla u| \leq C(n,p,K,k),
\end{align*}
where in the last inequality we employ that $|\nabla u|$ defines a Carleson measure.
\end{remark}

Before we state another consequence of the quasiregular Jones lemma, let us discuss the relations between the approach to the Hardy spaces based on the modulus of curve families, as in Section 4 in~\cite{AK} and our approach relying on the multiplicity and growth conditions imposed on a quasiregular mappings.  Some key results in~\cite{AK}, such as Theorems 3.3 and 4.1, Lemma 4.2 and Corollary 4.3 are based on the modulus definition of the quasiconformal mappings, see Section 2 in~\cite{AK} or some classical works \cite{Ric, va}. The definition of the $n$-modulus of curve family $M$ allows us to characterize nonconstant quasiregular mappings on ball $B$ as such sense-preserving, discrete and open maps that satisfy:
\[
 M(\Gamma)\leq K_0(f) N(f, A) M(f\Gamma),
\]
where $\Gamma$ is a family of paths in a Borel set $A\subset B$ such that the multiplicity $N(f,A)<\infty$, see Theorems II.2.4 and II.Theorem 6.7 in~\cite{Ric}. In the special case of quasiconformal mappings the multiplicity is one and we obtain the assertion of~\cite[Corollary II.2.7]{Ric}. In the setting of our work the lack of uniform bound for $N(f, A)$ is substituted by condition~\eqref{cond-m}, which however turns out to be too weak to repeat the core of reasonings in~\cite{AK}. Nevertheless, we still obtain the following quasiregular counterpart of the key observation~\cite[Corollary 4.3]{AK} that gives the fundamental characterization of the Hardy spaces for quasiconformal mappings in~\cite[Theorem 4.1]{AK}, see Remark~\ref{rem-Jones-main} following Theorem~\ref{thm-main} below.

First, we need to recall the following definition, here stated in the general metric space, as such generality is needed in Section 4 below. Let $(X,d)$ be a metric space and $\Omega \subset X$ a domain. We define the \emph{non-tangential region in $\Omega$ with parameter $\kappa$ centered at $\omega$} as follows:
\begin{equation}
\Gamma_{\Omega,\kappa}(\omega):=\{x \in \Omega:\; d(x,\omega) <
(1+\kappa)d(x,\partial \Omega)\}.\label{def-cone}
\end{equation}
When $\kappa$ is fixed, we write for simplicity $\Gamma_{\Omega}:=\Gamma_{\Omega,\kappa}$. 

Let now $X=\Rn$, $\Om=B$ and $d$ stand for the Euclidean distance. For a point $x\in B$ we define \emph{the shadow associated to $x$} as the following set (cf.~\cite[Definition 2.17]{af}):
$$
S(x):=\{\om\in \Sn: x\in \Gamma_B(\om)\}.
$$ 

\begin{prop}\label{prop-cor41}
  Let $n \geq 2$ and $f:B\to \Rn \setminus\{0\}$ be a $K$-quasiregular mapping in class~\eqref{est-min} satisfying the multiplicity condition~\eqref{cond-m} with $0\leq a<n-1$. Then for every $q>0$ and for all $x\in B$ it holds that
   \[
   |f(x)|^q\leq C \vint_{S(x)} |\tilde{f}(\om)|^q \ud \sigma,
   \]
  where $\ud \sigma$ stands for the surface measure on $\Sn$ and $C=C(n,q, K, a, \alpha)$.
\end{prop}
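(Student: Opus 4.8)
The plan is to deduce Proposition~\ref{prop-cor41} from the quasiregular Jones lemma (Lemma~\ref{lem-Jones-qr}) together with the Harnack inequality on hyperbolic balls (Corollary~\ref{cor:harnack}) and the existence of non-tangential boundary values (Corollary~\ref{cor-rad-lim}), following the strategy of \cite[Corollary 4.3]{AK}. Fix $x\in B$ and let $u:=|f|^q$; the key point is that, thanks to Lemma~\ref{lem-Jones-qr}, the gradient of $\ln u$ (equivalently $|Df|/|f|$, up to the constant $q$) generates a Carleson measure with the weight $(1-|y|)^{q-1}\,\ud y$ replaced more precisely by the measure appearing in the lemma. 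So the rough shape of the argument is: write $|f(x)|^q$ as an average of $|\tilde f(\om)|^q$ over the shadow $S(x)$, controlling the oscillation of $|f|$ along (almost) radial segments from $x$ out to $\partial B$ by integrating $|\nabla \ln|f||$, and absorbing that error term into the Carleson estimate.

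First I would fix a boundary point $\om_0$ "above" $x$ (say $x$ on the radial segment $[0,\om_0]$) and, for $\om\in S(x)$, estimate
\[
\big|\,\ln|f(x)| - \ln|\tilde f(\om)|\,\big| \;\lesssim\; \int_{\gamma_\om} \frac{|Df(y)|}{|f(y)|}\,\ud s(y),
\]
where $\gamma_\om$ is a non-tangential path in $\Gamma_B(\om)$ joining $x$ to $\om$; the endpoint value $\ln|\tilde f(\om)|$ makes sense a.e.\ by Corollary~\ref{cor-rad-lim}. Exponentiating and averaging over $S(x)$ with respect to surface measure, the desired inequality $|f(x)|^q \le C\,\vint_{S(x)}|\tilde f(\om)|^q\,\ud\sigma$ follows provided I can show
\[
\vint_{S(x)} \exp\!\Big( q\!\int_{\gamma_\om} \tfrac{|Df|}{|f|}\,\ud s\Big)\,\ud\sigma(\om) \;\le\; C(n,q,K,a,\alpha).
\]
To get such a bound with a constant independent of $x$, I would chain the Harnack inequality (Corollary~\ref{cor:harnack}) along a sequence of hyperbolic balls $B_k=B(x_k,k_0(1-|x_k|))$ with $1-|x_k|\approx 2^{-k}$ shadowing $\gamma_\om$: on each $B_k$, Harnack gives $|f|\approx|f(x_k)|$ with a uniform constant, so the path integral over $\gamma_\om\cap B_k$ is comparable to $\big|\ln|f(x_{k+1})| - \ln|f(x_k)|\big|$ plus a Carleson-controlled remainder, and the whole sum telescopes down to a bounded quantity once one invokes that $\mu$ from Lemma~\ref{lem-Jones-qr} (with exponent $q$, or with $q=1$ after raising to a power) has Carleson constant depending only on $n,q,K,\alpha,a$. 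The role of $S(x)$ is exactly that $|S(x)|\approx (1-|x|)^{n-1}$ and $\bigcup_{\om\in S(x)}\gamma_\om$ sits inside a Carleson tent of comparable size, so the total contribution of $\ud\mu$ to the average is $O(1)$.

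The main obstacle, as in the quasiconformal case but aggravated here by the lack of injectivity, is controlling the exponential average: a pointwise Carleson bound on $|\nabla\ln|f||$ does not by itself bound $\exp(\int|\nabla\ln|f||)$ unless one exploits a John--Nirenberg type phenomenon or, as I would prefer, uses the Harnack chaining to reduce everything to a telescoping estimate where the Carleson control enters only additively and the exponential is tamed. The technical heart is therefore the interplay between Corollary~\ref{cor:harnack} (to compare $|f|$ at consecutive scales with a \emph{scale-independent} constant, which is where the Miniowitz class hypothesis \eqref{est-min} is used, transported to the balls $B_k$) and Lemma~\ref{lem-Jones-qr} (to sum the remainders); once both are in hand the averaging over $S(x)$ and the change from $\ln|f|$ back to $|f|^q$ are routine. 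One should also check that the non-tangential limit $\tilde f(\om)$ is the common limit along $\gamma_\om$ for a.e.\ $\om$, which is covered by Corollary~\ref{cor-rad-lim} and the fact that the limit is independent of the chosen non-tangential approach curve.
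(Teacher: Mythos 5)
Your plan correctly identifies the ingredients (Lemma~\ref{lem-Jones-qr}, Corollary~\ref{cor:harnack}, Corollary~\ref{cor-rad-lim}) and the right geometric picture (the paths from $x$ to $\om\in S(x)$ live in a Carleson tent of $\mu$-mass $\lesssim (1-|x|)^{n-1}\approx \sigma(S(x))$), but its technical heart has a genuine gap. You reduce the proposition to the exponential-moment bound
\[
\vint_{S(x)} \exp\Big( q\int_{\gamma_\om} \tfrac{|Df|}{|f|}\,\ud s\Big)\,\ud\sigma(\om) \;\le\; C,
\]
and propose to obtain it by chaining the Harnack inequality along hyperbolic balls $B_k$ at scales $2^{-k}$. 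This does not work as described: the path integral $\int_{\gamma_\om\cap B_k}|Df|/|f|\,\ud s$ only \emph{dominates} $\bigl|\ln|f(x_{k+1})|-\ln|f(x_k)|\bigr|$, it is not comparable to it; and the sum $\sum_k\bigl|\ln|f(x_{k+1})|-\ln|f(x_k)|\bigr|$ of absolute values does not telescope --- Harnack gives only $O(1)$ per scale, hence $O(\#\{\text{scales}\})=\infty$ in total. What the Jones lemma actually delivers is a bound on the \emph{first} moment $\vint_{S(x)}\int_{\gamma_\om}|Df|/|f|\,\ud s\,\ud\sigma$, and passing from that to a uniform exponential moment is precisely a John--Nirenberg-type statement that your sketch does not establish. (A secondary issue: even granting the exponential bound, the deduction of the proposition needs an extra step such as $\vint e^{-qI}\ge \bigl(\vint e^{qI}\bigr)^{-1}$ by Cauchy--Schwarz, since averaging the pointwise inequality $|f(x)|^q\le|\tilde f(\om)|^q e^{qI(\om)}$ does not decouple the two factors.)

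The good news is that the exponential moment is not needed, and there are two ways to repair the argument with only the first moment. One is your own setup plus Jensen: from $|\tilde f(\om)|^q\ge |f(x)|^q\exp\bigl(-q\int_{\gamma_\om}|Df|/|f|\,\ud s\bigr)$ and $\vint e^{-qI}\ge e^{-q\,\vint I}$ one concludes directly once $\vint_{S(x)} I\le C$. The paper takes a closely related but distributional route, following Zinsmeister: setting $u=\ln|f|$ and $F(\om)=\int_0^1|\nabla u(t\om)|t^{n-1}\ud t$, Lemma~\ref{lem-Jones-qr} with $q=1$ gives $\|F\|_{L^1(\Sn)}\le c$; Chebyshev plus one application of Harnack (Claim 1, transported to a general $z$ via $T_z$ in Claim 2) shows that $\sigma\{\,\om\in S(x): |\tilde f(\om)|\le |f(x)|/N\,\}\le \tfrac{7^nC\|F\|_{L^1}}{\ln N}\,\sigma(S(x))$, so choosing $N_0$ to make this $\tfrac12\sigma(S(x))$ yields $\int_{S(x)}|\tilde f|^q\,\ud\sigma\ge \tfrac12 N_0^{-q}|f(x)|^q\sigma(S(x))$. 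Either way, the exponential is never integrated; you should replace the chaining step by one of these first-moment arguments.
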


\begin{proof} First, let us observe that, since $f$ satisfies~\eqref{est-min}, then in particular it has the growth condition~\eqref{cond-g} with $\beta=\alpha$ and the constant $2^\alpha C |f(0)|$, where $C$ as in Theorem~\ref{thm-min}. This together with~\eqref{cond-m} allows us to infer from Theorem~\ref{thm-kmv} the existence of the non-tangential limit function $\tilde{f}$ a.e. 

We first prove two observations which are the quasiregular counterparts of Lemmas 4 and 5 in~\cite{z}. Since their proofs follow strictly the lines of~\cite{z}, we will restrict only to presenting the key steps of reasonings.

Let $u:=\ln |f|$ and $F(\om):=\int_{0}^{1}|\nabla u(t\om)|\,t^{n-1}\ud t$ for $\om\in \Sn$. Note that Lemma~\ref{lem-Jones-qr} implies the following
\begin{equation}
 \|F\|_{L^1(\Sn)}=\int_{\Sn} \int_{0}^{1}|\nabla u(t\om)|\,t^{n-1}\ud t \ud \om \leq \int_{B} \frac{|Df|}{|f|} \ud x<c(a, \alpha, n, K). \label{eq-proof-Prop29}
\end{equation}

\noindent {\sc Claim 1 }(cf. \cite[Lemma 4]{z}): \emph{If $N>C(K, \alpha)^2$ for constant $C(K, \alpha)$ in the Harnack inequality~\eqref{harnack-balls}, then it holds that
\begin{equation}\label{eq: lem4-Z}
 \sigma\left(\left\{\om\in \Sn: |f(x)|\leq \frac{|f(0)|}{N} \right\}\right)\leq \frac{7^n \|F\|_{L^1(\Sn)}}{\ln N}.
\end{equation}
}
\emph{Proof:} Denote the set of points in the assertion of the claim by $F_N\subset \Sn$. Then by the direct computations and Corollary~\ref{cor:harnack} applied for $x=0$ and $k=\frac17$, we have that for $\om\in F_N$
\[
 F(\om)\geq 7^{1-n}\int_{\frac17}^{1} |\nabla u(t\om)|\ud t\geq 7^{1-n}\left|\ln \frac{|f(\om)|}{f(\frac17 \om)}\right|
 \geq 7^{1-n}\left|\ln \frac{N}{C(K, \alpha)}\right|\geq \frac{\ln N}{7^n}.
\]
 The claim now follows, as $\sigma (F_N)\leq \frac{7^n \|F\|_{L^1(\Sn)}}{\ln N}$.
\smallskip
 
\noindent {\sc Claim 2 }(cf. \cite[Lemma 5]{z}): \emph{There exists a constant $C$ independent of $f$ such that the following estimate holds for all $z\in B$
\begin{equation}\label{eq: lem5-Z}
 \sigma\left(\left\{x\in S(z): |f(x)|\leq \frac{|f(z)|}{N} \right\}\right)\leq \frac{7^n C \|F\|_{L^1(\Sn)}}{\ln N} \sigma(S(z)).
\end{equation}
}
\emph{Proof:}  Set $g:=f\circ T_z^{-1}(x)$ and recall that $g$ satisfies the multiplicity condition~\eqref{cond-m} on hyperbolic balls centered at the origin. Since by property (4) in~\cite{z} of map $T_z$  we have that for all $z\in B$
\[
 B(0,\frac17)\subset T_z\left( B(z, \frac14 (1-|z|))\right),
\]
we conclude by Corollary~\ref{cor:harnack} with $k=\frac14$, that $|g(0)|\leq C(f)|g(\frac17 \om)|$ for any $\om\in \Sn$.
Moreover, by analogy to Claim 1 above, we set $u:=\ln |g|$ and then the corresponding function $F$ satisfies
$\|F\|_{L^1(\Sn)}\leq \int_{B}\frac{|Dg|}{|g|}<\infty$, by the estimate analogous to \eqref{est-g-f} in the proof of Lemma~\ref{lem-Jones-qr}. Thus, since $g(0)=f\circ T_z^{-1}(0)=f(z)$, Claim 1 implies that
\[
\sigma\left(\left\{\om\in \Sn: |g(x)|\leq \frac{|f(z)|}{N} \right\}\right)\leq \frac{7^n \|F\|_{L^1(\Sn)}}{\ln N},
\]
and so
\[
\sigma\left(T_z\left(\left\{y \in S(z): |f(y)|\leq \frac{|f(z)|}{N} \right\}\right)\right)\leq \frac{7^n \|F\|_{L^1(\Sn)}}{\ln N}.
\]
Finally, recall the following estimate (3) in~\cite{z} for $x,y\in S(z)$, the shadow associated with $z$:
\[
\frac{1}{9(1-|z|)} |x-y|\leq |T_z(x)-T_z(y)|\leq \frac{2}{1-|z|} |x-y|.
\]
 This, together with the observation that $\sigma(S(z)) \approx (1-|z|)^{-1}$ gives the assertion of Claim 2.

We are in a position to complete the proof of the proposition. Let us choose $N_0$ large enough so that the constant in the estimate in Claim 2 satisfies $\frac{7^n C \|F\|_{L^1(\Sn)}}{\ln N_0}=\frac12$. Then the assertion of Claim 2 implies:
\[
 \sigma\left(\left\{x\in S(z): |f(x)|>\frac{|f(z)|}{N} \right\}\right)\geq \frac12 \sigma(S(z)).
\]
Hence the assertion of the proposition follows immediately, as
\[
 \int_{S(x)}|\tilde{f}(\om)|^q~ \ud \sigma (\om)\geq \frac{|f(x)|^q}{N_0^q}\sigma\left(\left \{y \in S(x); |f(y)|> \frac {|f(x)|}{N_0}\right\}\right)\geq \left(\frac{1}{2N_0^q}\right) |f(x)|^q \sigma(S(x)).
\]
\end{proof}

\section{Characterization of Hardy spaces for quasiregular mappings}\label{section-4}

In this section we study the equivalent characterizations of the Hardy spaces $\mathcal{H}^p$ and prove the main result of our work, i.e. Theorem~\ref{thm-main}. As a consequence, we show a counterpart of the Jerison-Weitsman theorem, providing conditions implying that a quasiregular map on $B$ belongs to $\mathcal{H}^p$, see Corollary~\ref{cor: jer-weits}. Moreover, we discuss a characterization of the Carleson measures on $B$ via the integral norms of non-tangential limit functions, see Proposition~\ref{obs:cor45}. Further applications of Theorem~\ref{thm-main} encompass an integral characterization of $\mathcal{H}^p$ spaces in Lemma~\ref{lem-48} and the discussion on Bergman spaces of quasiregular mappings, see Theorem~\ref{t: Ap-spaces}. Finally, in the last section we address how the results for quasiregular mappings in $\mathcal{H}^p$ can be expressed for the related $\A$-harmonic functions, in particular for $p$-harmonic functions in the plane.

 We begin with proving the main theorem of our work.


First, we recall the following definition and the result.

If $f:B\to \Rn$ is any map, then the \emph{non-tangential maximal function of f} is defined as follows:
\[
 f^{*}(\om):= \sup_{y\in \Gamma_B(\om)} |f(y)|,\quad \om\in \Sn
\]
where $\Gamma_B(\om)$ stands for a non-tangential region in $B$ centered at $\om$, see~\eqref{def-cone}.

\begin{prop}[cf. Proposition 1 in~\cite{z}]\label{prop-zin}
 Let $f:B\to \Rn\setminus\{0\}$ be such that $f\in W^{1,1}_{loc}(B)$. Set $u:=\ln|f|$. If $|\nabla u|\ud x$ is a Carleson measure and $f$ satisfies the Harnack inequality, then for every $p>0$, it holds that
 \[
     \|f^*\|^p_{L^p(\mathbb{S}^{n-1})}\leq C \|\tilde{f}\|^p_{L^p(\mathbb{S}^{n-1})},
 \]
 where $C$ depends only on $n, p$ and $\|u\|_{*}$. 
 \end{prop}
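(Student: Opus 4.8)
The plan is to follow the classical argument that the non-tangential maximal function of a function whose logarithm has bounded-mean-oscillation-type control (here expressed via the Carleson measure condition on $|\nabla u|\,\ud x$) is pointwise comparable to a local average of the boundary limit function, in the spirit of the original argument of Zinsmeister~\cite[Proposition 1]{z}. Fix $p>0$ and $\om\in\Sn$. For a point $y\in\Gamma_B(\om)$, the idea is to relate $|f(y)|$ to values of $\tilde f$ on the shadow $S(y)\subset\Sn$. By the Harnack inequality on the hyperbolic ball $B(y,k(1-|y|))$, the modulus $|f|$ is comparable (with constant depending only on $n,K,k$) throughout that ball, so replacing $y$ by a slightly different point on the same radius changes $|f(y)|$ by a bounded factor; thus it suffices to control $|f|$ along radii. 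Along a radial segment one writes $\ln|f(r\om)|-\ln|f(\om)|=-\int_r^1 \partial_t u(t\om)\,\ud t$, hence $\big|\ln|f(r\om)|-|\tilde f(\om)|\big|\le \int_0^1|\nabla u(t\om)|\,\ud t=:F(\om)$ (up to the $t^{n-1}$ Jacobian factor, absorbed into a dimensional constant). The Carleson measure hypothesis on $|\nabla u|\,\ud x$ gives, via Fubini as in~\eqref{eq-proof-Prop29}, that $\|F\|_{L^1(\Sn)}<\infty$ and, more precisely, that $\int_{S(y)}F\,\ud\sigma\lesssim \sigma(S(y))$ with a constant controlled by the Carleson norm $\|u\|_*$.

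Next I would run the good-$\lambda$ / distribution-function argument. Exactly as in Claim~1 and Claim~2 in the proof of Proposition~\ref{prop-cor41}, the bound $\int_{S(y)}F\,\ud\sigma\lesssim\sigma(S(y))$ combined with Harnack yields that for a sufficiently large absolute constant $N_0=N_0(n,K,\|u\|_*)$ one has
\[
 \sigma\Big(\Big\{\om'\in S(y):\ |\tilde f(\om')|>\tfrac{|f(y)|}{N_0}\Big\}\Big)\ \ge\ \tfrac12\,\sigma(S(y)).
\]
Consequently, for each $y\in\Gamma_B(\om)$,
\[
 |f(y)|^p\ \le\ 2N_0^p\,\vint_{S(y)}|\tilde f(\om')|^p\,\ud\sigma(\om')\ \le\ C\, \mathcal{M}(|\tilde f|^p)(\om),
\]
where $\mathcal M$ denotes the Hardy--Littlewood maximal operator on $\Sn$; here one uses that for $y\in\Gamma_B(\om)$ the shadow $S(y)$ is contained in a surface ball centered at $\om$ of radius comparable to $1-|y|$, with $\sigma(S(y))$ comparable to that ball's measure (the aperture $\kappa$ of the cone enters only through the comparability constants). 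Taking the supremum over $y\in\Gamma_B(\om)$ gives the pointwise bound $f^*(\om)^p\le C\,\mathcal M(|\tilde f|^p)(\om)$.

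Finally I would integrate over $\Sn$. If $p>1$ one may instead keep the $L^p$-averages and apply the $L^p$-boundedness of $\mathcal M$ directly to $|\tilde f|\in L^p$; for the general case $0<p<\infty$ the trick is the one above — we have already passed to $\mathcal M(|\tilde f|^p)$, which is the maximal function of an $L^1(\Sn)$ function, so weak-$(1,1)$ does not immediately give $L^1$. The cleanest route is: the pointwise inequality $f^*(\om)^p\le C\,\mathcal M(|\tilde f|^p)(\om)$ together with the fact that on the compact space $\Sn$ one can replace $\mathcal M$ by the centered maximal function and use that $\{f^*>\lambda\}$ is governed by a Vitali covering of the cones, giving $\sigma(\{f^*>\lambda\})\lesssim \lambda^{-p}\|\tilde f\|_{L^p}^p$, i.e. $f^*\in L^{p,\infty}$; then upgrade to $L^p$ by the standard Kolmogorov/interpolation step exploiting that the estimate holds for every exponent (apply it with $p$ replaced by $p/2$ to get $f^*\in L^{p/2,\infty}$ as well, and interpolate), or more simply note that $\tilde f\in L^{p}$ implies $|\tilde f|^{p}\in L^1$ and, since the same argument applied with $p$ replaced by any $p'<p$ shows $f^*\in L^{p'}$ locally, a direct layer-cake integration using $\sigma(\{f^*>\lambda\})\lesssim\lambda^{-p'}\int_{\{|\tilde f|>c\lambda\}}|\tilde f|^{p'}$ closes the bound $\|f^*\|_{L^p}^p\le C\|\tilde f\|_{L^p}^p$. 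The main obstacle is precisely this last passage from the weak-type / maximal-function bound to the strong $L^p$ bound when $p\le 1$; the resolution is the self-improving nature of the estimate (it holds for all exponents simultaneously, so one has room to interpolate), exactly as in Zinsmeister's argument, and the Harnack inequality is what makes the reduction to radial behaviour and hence the whole scheme legitimate for non-injective $f$.
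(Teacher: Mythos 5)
The paper does not actually prove Proposition~\ref{prop-zin}: it imports it verbatim from Zinsmeister \cite[Proposition 1]{z}, adding only the remark that $\|u\|_{*}$ corresponds to the integral \eqref{est-Carleson-global} with $q=1$. So there is no in-paper proof to match against; what you have written is a reconstruction of the cited argument, and it is essentially sound. It is worth noting that your route coincides almost exactly with the \emph{alternative} derivation the paper itself assembles elsewhere: your Claims about the shadow $S(y)$ are Claims 1 and 2 inside the proof of Proposition~\ref{prop-cor41} (the quasiregular versions of Zinsmeister's Lemmas 4 and 5), and your concluding maximal-function step is Remark~\ref{rem-Jones-main}, where the pointwise bound $|f^{*}(\om)|^{q}\leq C\,M(|\tilde f|^{q})(\om)$, valid for \emph{every} $q>0$, is applied with $q<p$ so that $|\tilde f|^{q}\in L^{p/q}(\Sn)$ with $p/q>1$ and the strong $L^{p/q}$-boundedness of $M$ closes the estimate. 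That is the clean finish; your final paragraph circles around it via weak-type bounds and interpolation, and as literally stated the step ``interpolate between $f^{*}\in L^{p,\infty}$ and $f^{*}\in L^{p/2,\infty}$'' would only give strong $L^{p'}$ for $p/2<p'<p$, not $L^{p}$. Your last alternative (the layer-cake integration with the localized weak-type bound $\sigma(\{f^{*}>\lambda\})\lesssim\lambda^{-p'}\int_{\{|\tilde f|>c\lambda\}}|\tilde f|^{p'}$ for some $p'<p$) does work, but it is an unnecessarily long way around.

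Two smaller points to tighten. First, the localized estimate you need is not $\int_{S(y)}F\,\ud\sigma\lesssim\sigma(S(y))$ for the full $F(\om')=\int_{0}^{1}|\nabla u(t\om')|t^{n-1}\ud t$ (the Carleson hypothesis only gives that globally), but the same bound for the truncation $\int_{|y|}^{1}|\nabla u(t\om')|\,\ud t$, whose support lies in a fixed dilate of the Carleson box over $S(y)$; that truncated bound is exactly what the distribution estimate on $\{\om'\in S(y):|\tilde f(\om')|\leq |f(y)|/N\}$ requires, and it follows directly from the Carleson condition by Fubini (equivalently, one composes with $T_{y}$ as in Claim 2 of Proposition~\ref{prop-cor41}). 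Second, since Proposition~\ref{prop-zin} assumes only $f\in W^{1,1}_{\loc}$, the Carleson property of $|\nabla u|\,\ud x$, and the Harnack inequality, you should derive the two Claims from these hypotheses alone rather than by citing the quasiregular versions, which in the paper lean on Lemma~\ref{lem-Jones-qr} and the multiplicity condition; this is harmless because the Carleson hypothesis plays precisely the role that the Jones lemma plays there.
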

 The norm $\|u\|_{*}$ appearing in the statement of \cite[Proposition 1]{z} for us corresponds to integral ~\eqref{est-Carleson-global} for $q=1$, see the discussion on pg. 127 in \cite{z}.

\begin{proof}[Proof of Theorem~\ref{thm-main}]
 
 {\bf (1) $\Rightarrow$ (2)} This follows immediately by Corollary~\ref{cor-rad-lim}, which gives the existence of the non-tangential function $\tilde{f}$ at a.e. point of $\mathbb{S}^{n-1}$.
%

 {\bf (2) $\Rightarrow$ (3)} Let us apply Proposition~\ref{prop-zin} to map $f$ and $u=\ln |f|$. In order to verify assumptions of the proposition, we observe that, since $|\nabla u|\leq |Df|/|f|$, we have by Lemma~\ref{lem-Jones-qr} that $|\nabla u| \ud \mathcal{L}^n$ defines the Carleson measure. Moreover, by~\eqref{harnack-balls} the Harnack inequality holds for $f$.
 Therefore, ~\cite[Proposition 1]{z} gives us that
\begin{equation}\label{est1:thm-main}
   \|f^*\|^p_{L^p(\mathbb{S}^{n-1})}\leq C   \|\tilde{f}\|^p_{L^p(\mathbb{S}^{n-1})},
\end{equation}
where $C$ depends on $n, p $ and $a, K, \alpha$. The assertion (3) follows.
 
 {\bf (3) $\Rightarrow$ (2)} This follows from $f(r\om)\leq f^{*}(\om)$ a.e. in $\mathbb{S}^{n-1}$ and small enough $r$. 
 
Finally, the same inequality allows us to infer that  {\bf (3) $\Rightarrow$ (1)}, by integration over $\mathbb{S}^{n-1}$ and using Definition~\ref{def-Hardy}. 
 \end{proof}

\begin{remark}\label{rem-Jones-main}
 Let us observe that under assumptions of Theorem~\ref{thm-main}, its assertion (2) together with Proposition~\ref{prop-cor41} imply assertion (3), similarly as in the proof of~\cite[Theorem 4.1]{AK}. Indeed, the estimate in Proposition~\ref{prop-cor41} implies that the non-tangential maximal function satisfies
 $$
 |f^*(\om)|^q\leq C M(|f|^q)(\om)\quad \hbox{for any } q>0,
$$
where $M$ stands for the Hardy--Littlewood maximal function on $\Sn$. Take $q$ such that $q<p$. Then by the boundedness of the operator $M$ on $L^s(\Sn)$ for $1<s<\infty$, we get the following:
 \begin{align*}
  \|f^{*}\|^p_{L^p(\Sn)}=  \|{|f^{*}|}^q\|^{\frac{p}{q}}_{L^{\frac{p}{q}}(\Sn)} \lesssim \|M(|f|^q)\|^{\frac{p}{q}}_{L^{\frac{p}{q}}(\Sn)}\lesssim \|\tilde{f}\|^p_{L^{p}(\Sn)}.
 \end{align*}
\end{remark}

\subsection*{The Jerison--Weitsman theorem for quasiregular maps}

One of the important consequences of Theorem~\ref{thm-main} and Lemma~\ref{lem-Jones-qr} is a counterpart of the Jerison--Weitsman result,~\cite[Theorem 1]{JW} showing that the poll of quasiregular mapping belonging to some Hardy space is wide. 
\begin{cor}\label{cor: jer-weits}
   Let $n\geq 2$ and $f:B\to \Rn \setminus\{0\}$ be a $K$-quasiregular mapping in class~\eqref{est-min} satisfying multiplicity condition~\eqref{cond-m} with $0\leq a<n-1$. Then, there exist constants $C=C(n, K, a, \alpha)$ and $p=p(n, K, a, \alpha)$ such that $ \|f\|_{H_{p}} \leq C$.
 \end{cor}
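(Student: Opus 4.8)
The plan is to invoke Theorem~\ref{thm-main}, which reduces the corollary to producing a single exponent $p=p(n,K,a,\alpha)>0$ with $\tilde f\in L^{p}(\Sn)$; I would obtain this by a John--Nirenberg argument applied to $\ln|f|$, fed by the Carleson estimate of Lemma~\ref{lem-Jones-qr}. First, since $f$ lies in class~\eqref{est-min} it satisfies the growth condition~\eqref{cond-g} with $\beta=\alpha$, so by~\eqref{cond-m} (which implies~\eqref{precond-m}) and Theorem~\ref{thm-kmv} the non-tangential limit $\tilde f$ exists a.e.\ on $\Sn$. Set $u:=\ln|f|\in W^{1,1}_{\loc}(B)$, which is legitimate because $f$ omits the origin. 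Since $|\nabla u|\le|Df|/|f|$, Lemma~\ref{lem-Jones-qr} with $q=1$ shows that $\ud\mu:=|\nabla u|\,\ud x$ is a Carleson measure on $B$ with constant $M=M(n,K,a,\alpha)$, and~\eqref{est-Carleson-global} with $q=1$ gives the global bound $\int_{B}|\nabla u|\,\ud x\le c_{0}(n,K,a,\alpha)$; in particular $\int_{0}^{1}|\nabla u(t\om)|\,\ud t<\infty$ for a.e.\ $\om$, so $\tilde u(\om):=\lim_{r\to1}u(r\om)$ is finite a.e., whence $\tilde f\neq0$ a.e.\ and $\tilde u=\ln|\tilde f|$.

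The core step is to show that $\ln|\tilde f|$ lies in $\mathrm{BMO}(\Sn)$ with norm $\lesssim_{n}M$. For a cap $I\subset\Sn$, let $Q_{I}\subset B$ be the Carleson tent over $I$ and $z_{I}$ the point of $Q_{I}$ at depth comparable to the radius of $I$ above its centre; one estimates $\vint_{I}|\tilde u(\om)-u(z_{I})|\,\ud\sigma(\om)$ by telescoping $|\nabla u|$ along a Whitney chain in $Q_{I}$ joining $z_{I}$ to $\om$, and a Fubini computation organised by Whitney generations collapses the resulting sum to $\sigma(I)^{-1}\int_{Q_{I}}|\nabla u|\,\ud x$, which is $\le M$ by the Carleson property of $\mu$. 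Applying this with $I=\Sn$ (so $Q_{I}=B$ and $z_{I}=0$) additionally yields $\vint_{\Sn}|\tilde u-u(0)|\,\ud\sigma\lesssim_{n}c_{0}$, hence $\langle\tilde u\rangle_{\Sn}:=\vint_{\Sn}\tilde u\,\ud\sigma\le\ln|f(0)|+c_{1}(n,K,a,\alpha)$.

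Now apply the John--Nirenberg inequality on the space of homogeneous type $\Sn$: there exist $p_{0}(n)>0$ and $C(n)$ with $\vint_{\Sn}e^{p(\tilde u-\langle\tilde u\rangle_{\Sn})}\,\ud\sigma\le C(n)$ whenever $0<p\le p_{0}(n)/\|\tilde u\|_{\mathrm{BMO}(\Sn)}$. Since $\|\tilde u\|_{\mathrm{BMO}(\Sn)}\lesssim M=M(n,K,a,\alpha)$, this holds for some $p=p(n,K,a,\alpha)>0$, and then
\[
\vint_{\Sn}|\tilde f(\om)|^{p}\,\ud\sigma=e^{p\langle\tilde u\rangle_{\Sn}}\vint_{\Sn}e^{p(\tilde u-\langle\tilde u\rangle_{\Sn})}\,\ud\sigma\le C(n)\,e^{p(\ln|f(0)|+c_{1})}=C(n,K,a,\alpha)\,|f(0)|^{p}.
\]
Thus $\tilde f\in L^{p}(\Sn)$ with $\|\tilde f\|_{L^{p}(\Sn)}\le C(n,K,a,\alpha)\,|f(0)|$, and Theorem~\ref{thm-main} gives $\|f\|_{\mathcal{H}^{p}}\approx\|\tilde f\|_{L^{p}(\Sn)}\lesssim_{n,K,a,\alpha}|f(0)|$, which is the claimed estimate (normalising $|f(0)|=1$ recovers the stated form).

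I expect the main obstacle to be the $\mathrm{BMO}$ step: one must pass from the \emph{$n$-dimensional}, scale-invariant Carleson control $\int_{Q_{I}}|\nabla u|\,\ud x\le M\,\sigma(I)$ to control of the essentially \emph{one-dimensional} boundary oscillation of $u$, which is exactly what the Whitney-chain telescoping accomplishes, and which requires the Carleson bound to be available at \emph{every} location and scale. Tracing this back, that is why Lemma~\ref{lem-Jones-qr} is stated with the M\"obius-invariant multiplicity condition~\eqref{cond-m} rather than merely~\eqref{precond-m}, and why it delivers a genuine Carleson measure and not just a measure of finite total mass. Alternatively, one can bypass $\mathrm{BMO}$ and iterate the ``large value'' analogues of Claims~1 and 2 in the proof of Proposition~\ref{prop-cor41} --- replace $\{|f|\le|f(z)|/N\}$ by $\{|f|\ge N|f(z)|\}$ and use $|\ln t|=|\ln(1/t)|$ together with the Harnack inequality (Corollary~\ref{cor:harnack}) --- along a Whitney decomposition of the open set $\{f^{*}>\lambda\}$; this is the same John--Nirenberg mechanism in a different guise.
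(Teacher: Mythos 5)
Your argument is correct and follows essentially the same route as the paper: Lemma~\ref{lem-Jones-qr} with $q=1$ makes $|\nabla\ln|f||\,\ud x$ a Carleson measure, whence $\widetilde{\ln|f|}\in\mathrm{BMO}(\Sn)$ with controlled norm, John--Nirenberg gives $\tilde f\in L^{p}(\Sn)$ for some $p=p(n,K,a,\alpha)$, and Theorem~\ref{thm-main} concludes. The only differences are cosmetic: where you re-derive the BMO step by Whitney-chain telescoping, the paper simply cites the Varopoulos theorem (\cite[Theorem 4]{z}), and your explicit tracking of the factor $|f(0)|$ (i.e.\ the implicit normalization $|f(0)|=1$ in the stated bound) is in fact a more careful bookkeeping than the paper's own computation.
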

 \begin{proof}
  We recall that by the Varopoulos theorem (see e.g. \cite[Theorem 4]{z}) a Sobolev real-valued function $u\in W^{1,1}(B)$
  with radial limits function $\tilde{u}$, such that $|\nabla u| \ud x$ defines a Carleson measure satisfies $\tilde{u}\in {\rm BMO}(\mathbb{S}^{n-1})$. We apply this result to $u:=\ln |f|$ and appeal also to Lemma~\ref{lem-Jones-qr} with $q=1$ together with Corollary~\ref{cor-rad-lim}, to conclude that the radial limits function $\tilde{u}=\widetilde{\ln |f|}\in {\rm BMO}(\mathbb{S}^{n-1})$. Furthermore, the discussion similar to the one following Theorems 3 and 4 in~\cite{z} gives us that the above BMO-norm can be estimated by a constant $C(n, K, a, \alpha)$. For the readers convenience we present some more details. Define function $F$ as in~\eqref{eq-proof-Prop29}, corresponding to function $H_f$ in Zinsmeister's notation (cf. pg. 126 in~\cite{z}), i.e.:
 \[
  F(\om)=\int_{0}^{1}|\nabla u(t\om)|\,t^{n-1}\ud t \leq \int_{0}^{1}\frac{|Df(t\om)|}{|f(t\om)|}\,t^{n-1}\ud t:=H_f(\om).
 \] 
Then, by the discussion analogous to the one at formulas ~\eqref{est-T-prop} and ~\eqref{est-g-f}, we show by our Lemma~\ref{lem-Jones-qr} applied for $q=1$ that also $H_{f\circ T}\in L^1(\Sn)$ for any M\"obius selfmap $T$ of ball $B$. This gives the conclusion as in~\eqref{eq-proof-Prop29} and hence by the Varopoulos theorem~\cite[Theorem 4]{z} we have that
  \begin{equation}\label{est-BMO-cor29}
\| \tilde{u}\|_{{\rm BMO}(\mathbb{S}^{n-1})}=\|\widetilde{\ln |f|}\,\|_{{\rm BMO}(\mathbb{S}^{n-1})} \leq C(n, K, a, \alpha).
  \end{equation}
 Next, we apply the John--Nirenberg lemma, see e.g. Section 18 in~\cite{hkm}, to show that for some $p>0$ the $L^p$-norm of $\tilde{f}$ is finite.
%
\begin{align*}
 \int_{B}|\tilde{f}|^p &\leq \int_{1}^{\infty}ps^{p-1} \sigma\left(\{x\in B: |\tilde{f}|\geq s\}\right)\ud s
 = \int_{1}^{\infty}ps^{p-1} \sigma\left(\{x\in B: \ln|\tilde{f}|\geq \ln s\}\right)\ud s \\
 &\lesssim \sigma(B)\,\int_{1}^{\infty}ps^{p-1}  e^{-c(n)\|\tilde{u}\|_{{\rm BMO}(\mathbb{S}^{n-1})}\ln s}\ud s 
 \lesssim p\sigma(B)\,\int_{1}^{\infty}s^{p-1-c(n)\|\tilde{u}\|_{{\rm BMO}(\mathbb{S}^{n-1})}}\ud s.
\end{align*}  
  The latter integral converges provided that $p<c(n)\|\tilde{u}\|_{{\rm BMO}(\mathbb{S}^{n-1})}$, which by~\eqref{est-BMO-cor29} gives that $p<C(n, K, a, \alpha)$. Finally, implication (2) to (1) in Theorem~\ref{thm-main} gives us the assertion of the corollary.
 \end{proof}

\subsection*{Characterization of Carleson measures via quasiregular mappings}\label{subs-Carl}

Analysis of implications (1) $\Rightarrow$ (2) and (2) $\Rightarrow$ (3) in Theorem~\ref{thm-main} together with a general result in metric measure spaces, see \cite[Theorem 6.13]{af} and below, allow us to infer an important characterization of the Carleson measures on ball $B$, previously known for quasiconformal mappings on balls in Euclidean spaces, see Proposition~\ref{obs:cor45} below (cf. also~\cite[Theorem 1.5]{af} for the similar result in the Heisenberg group $\mathbb{H}_1$).

Before presenting the result, let us recall the aforementioned metric measure spaces result and the necessary definitions.

A non-empty domain $\Om\subset X$ of a metric space $(X,d)$ has \emph{$s$-regular boundary} for some $s>0$, if its
boundary is Ahlfors $s$-regular with respect to the Hausdorff measure on $X$ restricted to $\partial \Om$, i.e., there exists a constant $C\geq 1$ such that
\begin{displaymath}
C^{-1}\, r^s \leq \mathcal{H}^s(B(x,r)\cap \partial \Omega)\leq C
\,r^s,\quad \text{for all }x\in \partial \Omega\text{ and
}0<r<\mathrm{diam}(\partial \Omega).
\end{displaymath}

Let us now recall a notion of Carleson measure in metric spaces.

\begin{defn}\label{def: Carleson-msp}
 Fix $1\leq \alpha<\infty$ and $s>0$. Let $(X,d)$ be a metric space and $\Omega \subset X$ a domain with nonempty $s$-regular boundary. We say that a (positive) Borel measure $\mu$ on $\Omega$ is an \emph{$\alpha$-Carleson measure on  $\Omega$} if there exists a constant $C>0$ such that
\begin{equation}\label{eq:CarlesonCOnstAbstr}
\mu(\Omega \cap B(\omega,r))\leq C r^{\alpha\,s},\quad \text{for
all }\omega\in \partial\Omega\text{ and }r>0.
\end{equation}
The $\alpha$-\emph{Carleson measure constant} of $\mu$ is defined by
\begin{displaymath}
\gamma_{\alpha}(\mu):= \inf\{C>0\text{ such that
\eqref{eq:CarlesonCOnstAbstr} holds for all $\omega \in\partial
\Omega$ and $r>0$}\}
\end{displaymath}
We also call $1$-Carleson measures simply \emph{Carleson
measures}.
\end{defn}

\begin{theorem}\cite[Theorem 6.13]{af}\label{p:NontangentialInequality} Fix $s>0$ and $1\leq \alpha <\infty$. Let $(X,d)$ be a proper metric space and let $\Omega \subset X$ be a bounded
domain with nonempty $s$-regular boundary and let $\kappa>0$ be
such that the non-tangential region $\Gamma_{\kappa}(\omega)$ is
nonempty for all $\omega \in
\partial \Omega$. Assume that $\mu$ is an $\alpha$-Carleson
measure on $\Omega$. Then the $\kappa$-non-tangential maximal
function of an arbitrary Borel function $h:\Omega
\to [0,+\infty)$ satisfies
\begin{equation}\label{eq:IntIneqFromCarlesonNT}
\int_{\Omega} h^{\alpha p}\,d\mu \leq C \left(\int_{\partial
\Omega}
(\sup_{x\in \Gamma_{\Omega,\kappa}(\omega)} h(x))^p\,d\mathcal{H}^s\right)^{\alpha},\quad\text{for
all }0<p<\infty
\end{equation}
where $C$ depends on $p$, $\alpha$, $s$, $\kappa$,
$\gamma_{\alpha}(\mu)$, and the $s$-regularity constant of
$\partial \Omega$. If $\alpha=1$, then  $C$ can be chosen
independently of $p$.
\end{theorem}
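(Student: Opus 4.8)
\medskip
\noindent\textbf{A proof strategy for Theorem~\ref{p:NontangentialInequality}.}
Write $Nh(\omega):=\sup_{x\in\Gamma_{\Omega,\kappa}(\omega)}h(x)$ for the non-tangential maximal function of $h$ and put $O_t:=\{\omega\in\partial\Omega:Nh(\omega)>t\}$ for $t>0$. The plan is to deduce \eqref{eq:IntIneqFromCarlesonNT}, via the layer-cake formula, from the single distributional estimate
\begin{equation}\label{eq:strategy-dist}
\mu\big(\{x\in\Omega:h(x)>t\}\big)\le C\,\mathcal{H}^s(O_t)^{\alpha},\qquad t>0,
\end{equation}
with $C$ depending only on $\alpha$, $s$, $\kappa$, the $s$-regularity constant of $\partial\Omega$, and $\gamma_\alpha(\mu)$. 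The first step is to relate the two sides geometrically: for $x\in\Omega$ let $S(x):=\{\omega\in\partial\Omega:x\in\Gamma_{\Omega,\kappa}(\omega)\}=B\big(x,(1+\kappa)d(x,\partial\Omega)\big)\cap\partial\Omega$ be the shadow of $x$. If $h(x)>t$ and $x\in\Gamma_{\Omega,\kappa}(\omega)$ then $Nh(\omega)\ge h(x)>t$, so $S(x)\subset O_t$; thus $\{h>t\}\subset\widehat{O_t}$, where $\widehat{O_t}:=\{x\in\Omega:S(x)\subset O_t\}$ is the associated sawtooth region. Since $X$ is proper, $x$ admits a nearest boundary point $\omega_x$, and one checks easily that $S(x)\supset B(\omega_x,\kappa\,d(x,\partial\Omega))\cap\partial\Omega$; hence by Ahlfors $s$-regularity $\mathcal{H}^s(S(x))\gtrsim d(x,\partial\Omega)^s$. (Because the inequality defining $\Gamma_{\Omega,\kappa}$ is strict, $Nh$ is lower semicontinuous and each $O_t$ is open.)

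\smallskip
To establish \eqref{eq:strategy-dist} I would take a Whitney-type decomposition of the open set $O_t$ into boundary balls $B_i:=B(\omega_i,r_i)\cap\partial\Omega$ with $r_i\approx\dist(\omega_i,\partial\Omega\setminus O_t)$, uniformly bounded overlap, and whose fixed dilates cover $O_t$ (such a decomposition exists since $\partial\Omega$ is doubling); $s$-regularity together with bounded overlap then gives $\sum_i r_i^s\lesssim\mathcal{H}^s(O_t)$. The crucial geometric point is that $\widehat{O_t}\subset\bigcup_i B(\omega_i,C\kappa^{-1}r_i)$ for an absolute constant $C$: if $x\in\widehat{O_t}$ and $\delta:=d(x,\partial\Omega)$, then $\omega_x\in S(x)\subset O_t$ lies in a dilate of some $B_i$, while $S(x)\subset O_t$ forces $\dist(\omega_x,\partial\Omega\setminus O_t)\ge\kappa\delta$; comparing this with $r_i\approx\dist(\omega_i,\partial\Omega\setminus O_t)$ yields $\delta\lesssim\kappa^{-1}r_i$ and then $d(x,\omega_i)\le\delta+d(\omega_x,\omega_i)\lesssim\kappa^{-1}r_i$. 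Applying the $\alpha$-Carleson bound \eqref{eq:CarlesonCOnstAbstr} to each tent $\Omega\cap B(\omega_i,C\kappa^{-1}r_i)$ and using the elementary superadditivity $\sum_i b_i^{\alpha}\le\big(\sum_i b_i\big)^{\alpha}$ (valid for $b_i\ge0$, $\alpha\ge1$) with $b_i=r_i^s$, we get
\begin{equation*}
\mu(\widehat{O_t})\le\sum_i\gamma_\alpha(\mu)\,(C\kappa^{-1}r_i)^{\alpha s}\lesssim\gamma_\alpha(\mu)\Big(\sum_i r_i^s\Big)^{\alpha}\lesssim\gamma_\alpha(\mu)\,\mathcal{H}^s(O_t)^{\alpha},
\end{equation*}
which is \eqref{eq:strategy-dist}. (The case $O_t=\partial\Omega$ reduces, since $\Omega$ is bounded, to the single Carleson bound $\mu(\Omega)\lesssim\mathcal{H}^s(\partial\Omega)^{\alpha}$.)

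\smallskip
Finally, I would integrate \eqref{eq:strategy-dist}. By the layer-cake formula,
\begin{equation*}
\int_{\Omega}h^{\alpha p}\,\ud\mu=\alpha p\int_0^{\infty}t^{\alpha p-1}\mu\big(\{h>t\}\big)\,\ud t\le C\alpha p\int_0^{\infty}t^{\alpha p-1}\mathcal{H}^s(O_t)^{\alpha}\,\ud t,
\end{equation*}
and the substitution $u=t^p$ reduces the remaining integral to the one-variable inequality $\int_0^{\infty}u^{\alpha-1}G(u)^{\alpha}\,\ud u\le\big(\int_0^{\infty}G(u)\,\ud u\big)^{\alpha}$ for the nonincreasing function $G(u):=\mathcal{H}^s(O_{u^{1/p}})$; this in turn follows from $u\,G(u)\le\int_0^{u}G\le\int_0^{\infty}G$, which gives $u^{\alpha-1}G(u)^{\alpha}\le\big(\int_0^{\infty}G\big)^{\alpha-1}G(u)$. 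Undoing the substitutions and recognising $p\int_0^{\infty}t^{p-1}\mathcal{H}^s(O_t)\,\ud t=\int_{\partial\Omega}(Nh)^p\,\ud\mathcal{H}^s$ yields \eqref{eq:IntIneqFromCarlesonNT}; bookkeeping the constants shows $C$ depends only on $p,\alpha,s,\kappa,\gamma_\alpha(\mu)$ and the $s$-regularity constant, and that for $\alpha=1$ the powers of $p$ cancel, so $C$ is then independent of $p$. The step I expect to be the main obstacle is the covering bound for the sawtooth region $\widehat{O_t}$ — converting the inclusion $S(x)\subset O_t$ into the estimate $d(x,\partial\Omega)\lesssim\kappa^{-1}r_i$ — which is precisely where properness of $X$ and Ahlfors $s$-regularity of $\partial\Omega$ are genuinely used; everything else is routine measure-theoretic bookkeeping.
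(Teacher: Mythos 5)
This theorem is not proved in the paper at all: it is imported verbatim from \cite[Theorem 6.13]{af}, so there is no in-house argument to compare against. Your proposal is, however, a correct and essentially complete proof, and it follows the standard route for Carleson embedding theorems (the same one used in the cited source): reduce \eqref{eq:IntIneqFromCarlesonNT} to the single distributional inequality $\mu(\{h>t\})\leq C\,\mathcal{H}^s(\{Nh>t\})^{\alpha}$ by showing $\{h>t\}$ lies in the sawtooth region over the open set $O_t$, cover that region by Carleson tents over a Whitney-type family of boundary balls of $O_t$ (this is where properness, the strictness of the aperture inequality, and Ahlfors regularity are genuinely used, exactly as you identify), apply \eqref{eq:CarlesonCOnstAbstr} tent by tent together with the superadditivity $\sum_i b_i^{\alpha}\leq(\sum_i b_i)^{\alpha}$, and then integrate in $t$ using the elementary Hardy-type inequality for the nonincreasing function $u\mapsto\mathcal{H}^s(O_{u^{1/p}})$. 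All the individual steps check out. Two cosmetic remarks: first, your covering radius should be of order $(1+\kappa^{-1})r_i$ rather than $\kappa^{-1}r_i$, which is harmless since $C$ is allowed to depend on $\kappa$; second, your bookkeeping actually yields a constant $C\alpha$ that is independent of $p$ for \emph{every} $\alpha\geq 1$, not only for $\alpha=1$, so you prove slightly more than the statement claims. The only places where you lean on standard but unproved facts are the existence of Whitney ball decompositions of open subsets of an Ahlfors regular (hence doubling) boundary and the degenerate cases $O_t=\partial\Omega$ and radii exceeding $\operatorname{diam}\partial\Omega$, all of which are routine and acceptable at this level of detail.
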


We are in a position to formulate and prove the announced characterization of Carleson measures via the quasiregular mappings.

\begin{prop}[cf. Corollary 4.5 in~\cite{AK}]\label{obs:cor45}
   Let $f:B\to \Rn \setminus\{0\}$ be a $K$-quasiregular mapping in class~\eqref{est-min} such that $f$ satisfies the multiplicity condition~\eqref{cond-m} with $0\leq a<n-1$. Suppose further that $\mu$ is an $\alpha$-Carleson measure on $B$. Then it holds
   \begin{equation}\label{est:carleson}
    \int_{B} |f(x)|^{\alpha p}\,\ud \mu(x) \leq C \left(\int_{\mathbb{S}^{n-1}} |\tilde{f}(\om)|^p\,\ud \sigma(\om)\right)^{\alpha},\quad\text{for all }0<p<\infty,
   \end{equation}
  where $C$ depends on $n, K, a$ and the Carleson constant $\gamma_{\alpha}(\mu)$. Conversely, let $K\geq 1$ and $p>\frac{n-1}{K^{\frac{1}{n-1}}}$ be fixed and such that~\eqref{est:carleson} holds for all $K$-quasiregular mappings in class~\eqref{est-min} satisfying condition~\eqref{cond-m} with $0\leq a<n-1$. Then $\mu$ is an $\alpha$-Carleson measure. In particular, if $p>n-1$, the assertion holds for any $K$. 
 \end{prop}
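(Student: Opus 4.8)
For the first (forward) implication, the plan is to apply Theorem~\ref{p:NontangentialInequality} directly with $X=\Rn$, $\Omega=B$, $s=n-1$ (the unit sphere is Ahlfors $(n-1)$-regular with respect to surface measure), and $h(x):=|f(x)|$, which is a nonnegative Borel function on $B$. The theorem then gives
\[
\int_{B} |f(x)|^{\alpha p}\,\ud\mu(x)\leq C\left(\int_{\Sn}\bigl(\sup_{x\in\Gamma_B(\om)}|f(x)|\bigr)^p\,\ud\sigma\right)^{\alpha}=C\,\|f^*\|_{L^p(\Sn)}^{\alpha p}.
\]
To turn the right-hand side into the $\tilde f$-norm I would invoke Theorem~\ref{thm-main}: under the stated hypotheses $\|f^*\|_{L^p(\Sn)}\approx\|\tilde f\|_{L^p(\Sn)}$ (with constant depending only on $n,p,a,K,\alpha$), which is exactly the equivalence of (2) and (3) there (and is meaningful since $\tilde f$ exists a.e.\ by Corollary~\ref{cor-rad-lim}). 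Combining the two inequalities yields~\eqref{est:carleson} with $C=C(n,K,a,\gamma_\alpha(\mu))$ as claimed. This direction is essentially a bookkeeping composition of two already-established results.

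For the converse, the strategy is to feed a well-chosen family of test mappings into~\eqref{est:carleson} so that the left side sees $\mu(B\cap B(\om_0,r))$ while the right side is controlled by $r^{(n-1)}$ (then raised to the $\alpha$). Fix $\om_0\in\partial B$ and $0<r<1$. The natural choice is a radial power of a coordinate-type conformal factor: take $f_{\om_0,r}(x):=\bigl(\tfrac{1-|x|^2}{|x-\om_0^*|^2}\bigr)^{\beta}e$ for a fixed unit vector $e$ and a suitable exponent $\beta$, i.e.\ a power of (a component of) the conformal self-map $T_x$ machinery; such a map is $1$-quasiconformal composed with a radial power, hence $K$-quasiregular with $K=K(\beta,n)$, it omits the origin, it lies in class~\eqref{est-min} (its $g_x$'s satisfy the Miniowitz bounds because conjugating by a Möbius map preserves the conformal-factor structure), and being injective it trivially satisfies~\eqref{cond-m} with $a=0$. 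The point is that on the Carleson tent $B\cap B(\om_0,r)$ one has $|f_{\om_0,r}(x)|\gtrsim r^{-\beta}$, while the boundary function satisfies $|\tilde f_{\om_0,r}(\om)|\approx\dist(\om,\om_0)^{-\beta}$, whose $L^p$ norm over $\Sn$ is finite precisely when $\beta p<n-1$, and in that range $\|\tilde f_{\om_0,r}\|_{L^p(\Sn)}^p\approx$ a constant independent of $r$. Plugging into~\eqref{est:carleson}:
\[
r^{-\alpha\beta p}\,\mu(B\cap B(\om_0,r))\lesssim\int_B|f_{\om_0,r}|^{\alpha p}\,\ud\mu\leq C\Bigl(\int_{\Sn}|\tilde f_{\om_0,r}|^p\,\ud\sigma\Bigr)^{\alpha}\lesssim C,
\]
so $\mu(B\cap B(\om_0,r))\lesssim r^{\alpha\beta p}$. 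To obtain the exact exponent $\alpha(n-1)$ one needs $\beta p$ to be allowed arbitrarily close to $n-1$; since $K=K(\beta,n)$ grows with $\beta$, and a $K$-quasiregular map built this way has optimal admissible exponent on the order of $\beta\sim K^{1/(n-1)}$ in the Miniowitz sense, the constraint $p>\tfrac{n-1}{K^{1/(n-1)}}$ is exactly what guarantees we can pick $\beta$ with $\beta p$ as close to $n-1$ as we wish while keeping the test map $K$-quasiregular; letting $\beta p\to n-1$ gives $\mu(B\cap B(\om_0,r))\lesssim r^{\alpha(n-1)}$, i.e.\ $\mu$ is an $\alpha$-Carleson measure. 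When $p>n-1$ one may simply take $\beta$ close to $1$ (so $K$ close to $1$, in fact any $K\ge 1$ works), which is the last sentence of the statement.

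The main obstacle I anticipate is in the converse: one must verify carefully that the explicit test mappings $f_{\om_0,r}$ genuinely belong to class~\eqref{est-min} \emph{with constants independent of $r$ and of $\om_0$} (so that the constant $C$ in~\eqref{est:carleson}, which a priori depends on the Miniowitz data, does not degenerate as $r\to0$), and to pin down the sharp relationship between the power $\beta$ and the quasiregularity constant $K$ that produces precisely the threshold $p>(n-1)/K^{1/(n-1)}$. Checking the $g_x=f_{\om_0,r}\circ T_x^{-1}$ bounds reduces, by conformal invariance of the construction, to a single model computation, but identifying the extremal exponent $\alpha=2^{n-1}K_I$ versus the radial power is the delicate quantitative point; once that is settled, the upper/lower bounds on $|f_{\om_0,r}|$ on the tent and on $|\tilde f_{\om_0,r}|$ on the sphere are routine estimates of the Poisson-type kernel.
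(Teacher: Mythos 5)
The forward implication is exactly the paper's argument: apply Theorem~\ref{p:NontangentialInequality} with $h=|f|$ and $s=n-1$, then pass from $\|f^*\|_{L^p(\Sn)}$ to $\|\tilde f\|_{L^p(\Sn)}$ via the implication (2)$\Rightarrow$(3) of Theorem~\ref{thm-main} (i.e.\ Proposition~\ref{prop-zin}); there is nothing to add there.

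The converse, however, has two genuine problems. First, the test map you write down, $f_{\om_0,r}(x)=\bigl(\tfrac{1-|x|^2}{|x-\om_0^*|^2}\bigr)^{\beta}e$ with $e$ a fixed unit vector, takes values in a single ray of $\Rn$, so $J_{f_{\om_0,r}}\equiv 0$ and the distortion inequality $|Df|^n\leq KJ_f$ forces such a map to be constant; it is not quasiregular for $n\geq 2$ (nor injective, as you claim). The paper instead uses the genuinely $n$-dimensional map $f(x)=A(x-y)\,|x-y|^{-1-\beta}$, a radial stretching centered at the \emph{exterior} point $y=(1+r)\om$. Second, and more importantly, your quantitative scheme does not close. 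You place the singularity at the boundary point $\om_0$ itself, work in the range $\beta p<n-1$ so that $\|\tilde f_{\om_0,r}\|_{L^p(\Sn)}^p$ is a finite constant independent of $r$, deduce $\mu(B\cap B(\om_0,r))\lesssim C_\beta\, r^{\alpha\beta p}$, and then propose to ``let $\beta p\to n-1$''. But $C_\beta$ contains $\int_{\Sn}\dist(\om,\om_0)^{-\beta p}\,\ud\sigma$, which blows up as $\beta p\to (n-1)^{-}$; and even apart from the constants, a family of bounds $\mu(B\cap B(\om_0,r))\lesssim_\beta r^{\alpha\beta p}$ for all $\beta p<n-1$ does not imply the endpoint bound $r^{\alpha(n-1)}$ (a measure with $\mu(B\cap B(\om_0,r))\approx r^{\alpha(n-1)}\log(1/r)$ satisfies all of them but is not $\alpha$-Carleson). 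The paper's placement of the singularity at distance $r$ outside the sphere is precisely what repairs this: on the tent one has $|f|\gtrsim r^{-\beta}$, while on the sphere $|\tilde f(\om')|\approx(|\om'-\om|+r)^{-\beta}$, whose $p$-th power integrates to $\lesssim r^{\,n-1-p\beta}$ in the \emph{opposite} regime $p\beta>n-1$ (equivalently $p>(n-1)/\beta$, which is exactly where the threshold on $p$ in terms of $K$ enters), and then $r^{p\alpha\beta}\cdot\bigl(r^{\,n-1-p\beta}\bigr)^{\alpha}=r^{\alpha(n-1)}$ on the nose, for a single fixed $\beta$ determined by $K$ --- no limiting argument is needed. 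You should redo the converse with such an $r$-dependent exterior singularity.
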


\begin{proof}
We apply Theorem~\ref{p:NontangentialInequality} for the metric space $(X, d)=(\R^n, |\cdot|)$, $\Om:=B$, function $h:=|f|$ and $s=n-1$ (note that $d\mathcal{H}^{n-1}\approx \ud \sigma$). Then, as implication (2) $\Rightarrow$ (3) of Theorem~\ref{thm-main} we apply Proposition~\ref{prop-zin} to conclude the proof of~\eqref{est:carleson}.

In order to show the opposite implication we use the same quasiconformal map as in the proof of~\cite[Corollary 4.5]{AK} and show that is satisfies the assumptions of the observation, see map $f$ below.

Since condition~\eqref{est:carleson} holds for all quasiregular maps in class~\eqref{est-min} satisfying the multiplicity condition~\eqref{cond-m} it is enough to find one map and show that it defines a Carleson measure. Set $\om\in\Sn$ and $0<r\leq 2$, also let $y:=(1+r)\om$ and $A$ be a sense reversing isometry of $\Rn$. We define the following map $f:B \to \Rn$:
\[
 f(x):=\frac{A(x-y)}{|x-y|^{1+K^{\frac{1}{1-n}}}}.
\]
Note that $|f(0)|=|y|^{-K^{\frac{1}{1-n}}}\not =0$ for any $y$ as defined above. Since $f$ is $K$-quasiconformal it trivially satisfies the multiplicity condition~\eqref{cond-m}. Moreover, the following discussion shows that $f$ also satisfies~\eqref{est-min} with exponent $\beta=K^{\frac{1}{1-n}}$ and $C=3^{\beta}$. Indeed, by the choice of $y$ we have that $1+r-|x|\leq |x-y|\leq 2+r-|x|$ and so, since 
\[
\frac{|f(x)|}{|f(0)|}= \left(\frac{|y|}{|x-y|}\right)^{\beta}
\]
it holds that
\[
\frac{1}{3^{\beta}} (1-|x|)^\beta \leq \frac{1}{(3-|x|)^\beta}\leq \left(\frac{1+r}{2+r-|x|}\right)^{\beta} \leq \left(\frac{|y|}{|x-y|}\right)^{\beta}\leq \left(\frac{1+r}{1+r-|x|}\right)^{\beta}\leq \frac{2^\beta}{(1-|x|)^\beta}.
\]
These inequalities are equivalent to~\eqref{est-min}, as trivially $1<1+r<2$ for $r=|x|$. Finally, we observe that for $p>\frac{n-1}{\beta}$ the above defined map $f$ satisfies 
\[
 \int_{\Sn} |f|^p\ud \sigma= \int_{\Sn} |A(x-y)|^p|x-y|^{-(1+\beta)p}\ud \sigma=\int_{\Sn} |x-\om-r\om|^{-p\beta}\ud \sigma \leq C r^{n-1-p\beta}.
\]
Moreover,
\[
\int_{B}|f(x)|^{p \alpha} \ud \mu \geq \int_{B\cap B(\om, r)}|x-y|^{-p \alpha \beta} \ud \mu \geq C^{-1} r^{-p \alpha\beta} \mu(B\cap B(\om, r)),  
\]
where $C$ is independent of $\om$ and $r$. By combining the above two estimates and applying condition~\eqref{est:carleson} we get that $\mu$ is an $\alpha$-Carleson measure on $B$.
\end{proof}

We next present the following two consequences of Proposition~\ref{obs:cor45}.

For a given map $f:B\to \Rn$ we define
\[
 M(r,f):=\sup_{\{x\in B: |x|=r|\}} |f(x)|,\quad 0<r<1.
\]

\begin{cor}[cf. Theorem 3.3 in~\cite{AK}]
    Let $f:B\to \Rn \setminus\{0\}$ be a $K$-quasiregular mapping in class~\eqref{est-min} such that $f$ satisfies the multiplicity condition~\eqref{cond-m} with $0\leq a<n-1$. If for some $0<p<n$ it holds that $f\in \mathcal{H}^p$, then
 \begin{equation}
  \int_{0}^{1}(1-r)^{n-2} M(r,f)^p\,\ud r<\infty.
 \end{equation}
\end{cor}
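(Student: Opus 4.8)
The plan is to deduce this from the Carleson measure characterization, Proposition~\ref{obs:cor45}, by choosing an appropriate $\alpha$-Carleson measure $\mu$ on $B$ and then comparing $\int_B |f|^p \, \ud\mu$ with the radial integral $\int_0^1 (1-r)^{n-2} M(r,f)^p \, \ud r$. The natural candidate is $\ud\mu = (1-|x|)^{n-2}\,\ud x$. First I would verify that this $\mu$ is indeed a Carleson measure (i.e.\ an $\alpha$-Carleson measure with $\alpha = 1$ and $s = n-1$): for $\om \in \Sn$ and $0 < r < \operatorname{diam}(\partial B)$, one estimates $\mu(B \cap B(\om,r)) = \int_{B \cap B(\om,r)} (1-|x|)^{n-2}\,\ud x$ by noting that on $B \cap B(\om,r)$ one has $1-|x| \lesssim r$, and the set has volume $\lesssim r^n$, so $\mu(B\cap B(\om,r)) \lesssim r^{n-2} \cdot r^{n} $ --- that is too crude; the sharper bound comes from integrating: $\int_{B\cap B(\om,r)}(1-|x|)^{n-2}\,\ud x \lesssim r^{n-1}\int_0^{cr}t^{n-2}\,\ud t \lesssim r^{n-1} \cdot r^{n-1}$, still not $r^{n-1}$. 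The correct computation uses that $B\cap B(\om,r)$ is contained in a "box" of base $\approx r^{n-1}$ in the tangential directions and depth $\approx r$ in the radial direction, and on this box $(1-|x|)^{n-2} \approx t^{n-2}$ where $t = 1-|x| \in (0, cr)$; hence $\mu(B \cap B(\om,r)) \lesssim r^{n-1} \int_0^{cr} t^{n-2}\, \ud t \approx r^{n-1} \cdot r^{n-1} = r^{2(n-1)}$, which for $r$ bounded is $\lesssim r^{n-1}$. So $\mu$ is a Carleson measure with $\alpha = 1$.

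Next, since $f \in \mathcal{H}^p$ with $0 < p < n$ and $f$ is $K$-quasiregular in class~\eqref{est-min} satisfying~\eqref{cond-m}, Corollary~\ref{cor-rad-lim} gives that the non-tangential limit $\tilde f$ exists a.e.\ on $\Sn$ and belongs to $L^p(\Sn)$. Applying Proposition~\ref{obs:cor45} with this $\mu$ and $\alpha = 1$ yields
\[
 \int_B |f(x)|^p \, (1-|x|)^{n-2}\,\ud x \leq C \int_{\Sn} |\tilde f(\om)|^p\,\ud\sigma(\om) < \infty.
\]
It then remains to bound the radial integral in the statement by the left-hand side above. Writing the left side in polar coordinates,
\[
 \int_B |f(x)|^p (1-|x|)^{n-2}\,\ud x = \int_0^1 (1-r)^{n-2} r^{n-1}\left(\int_{\Sn} |f(r\om)|^p\,\ud\om\right)\ud r,
\]
so it suffices to compare $M(r,f)^p$ with the spherical average $\vint_{\Sn}|f(r\om)|^p\,\ud\om$, up to a factor that is harmless after integration in $r$. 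This is exactly where the Harnack inequality enters: by Corollary~\ref{cor:harnack}, for $x$ with $|x| = r$ and any $y$ in a hyperbolic ball around $x$, $|f(y)| \approx_{n,K} |f(x)|$; more precisely, I would use the submean/Harnack-type control to show $M(r,f)^p \lesssim \vint_{\Sn}|f(\rho\om)|^p\,\ud\om$ for a suitable $\rho$ comparable to $r$ (e.g.\ $\rho = \tfrac{1+r}{2}$ or $\rho$ obtained by a covering of the sphere $|x| = r$ by hyperbolic balls), at the cost of replacing $1-r$ by a comparable quantity $1-\rho \approx 1-r$. One must be slightly careful: the cleanest route is to use Proposition~\ref{prop-cor41} (the submean value property over shadows) or the supremum estimate~\eqref{sup-est} applied to coordinate functions over hyperbolic balls $B(x,\tfrac12(1-|x|))$, which gives $|f(x)|^p \lesssim (1-|x|)^{-n}\int_{B(x,\frac12(1-|x|))}|f|^p$; integrating this against $(1-r)^{n-2}$ and using Fubini reduces everything to $\int_B |f|^p (1-|x|)^{n-2-?}\,\ud x$, and the exponent bookkeeping works out precisely when one tracks that the hyperbolic ball lies in an annulus of width $\approx (1-|x|)$ on which $(1-|\cdot|)^{n-2} \approx (1-|x|)^{n-2}$.

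The main obstacle I anticipate is this last comparison step: passing from the pointwise maximum $M(r,f)$ over the sphere of radius $r$ to an $L^p$ spherical average while keeping the weight $(1-r)^{n-2}$ under control, and making sure the resulting integral matches (rather than merely dominates up to a divergent factor) the Carleson-type integral $\int_B |f|^p (1-|x|)^{n-2}\,\ud x$. Concretely, after bounding $M(r,f)^p \lesssim (1-r)^{-n}\int_{B(x_r,\frac12(1-r))}|f|^p$ for a point $x_r$ realizing (up to constants) the maximum on $|x|=r$, one multiplies by $(1-r)^{n-2}$ and integrates in $r$; Fubini then turns this into $\int_B |f(z)|^p \big(\int_{I(z)} (1-r)^{-2}\,\ud r\big)\ud z$ where $I(z)$ is the $r$-interval of length $\approx 1-|z|$ for which $z$ lies in the relevant hyperbolic ball, and $\int_{I(z)}(1-r)^{-2}\ud r \approx (1-|z|)^{-2}\cdot(1-|z|) = (1-|z|)^{-1}$ --- giving $\int_B |f(z)|^p (1-|z|)^{-1}\,\ud z$, which is a Carleson integral with weight $(1-|z|)^{-1}$, hence finite by Proposition~\ref{obs:cor45} applied with the measure $\ud\mu = (1-|z|)^{-1}\ud z$ (one checks this is still a Carleson measure: $\int_{B\cap B(\om,r)}(1-|z|)^{-1}\,\ud z \lesssim r^{n-1}\int_0^{cr} t^{-1}\,\ud t$ --- which diverges, so one must instead keep the weight $(1-r)^{n-2}$ and be more careful with the covering, choosing finitely many hyperbolic balls covering $\{|x|=r\}$ and summing, which preserves the $(1-r)^{n-2}$ weight and leads directly to $\int_B|f|^p(1-|z|)^{n-2}\,\ud z < \infty$). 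This covering/Fubini bookkeeping, ensuring the weight exponent is exactly $n-2$ at the end, is the delicate point; everything else is an application of results already established.
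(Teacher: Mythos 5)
Your overall strategy --- secure $\tilde f\in L^p(\Sn)$ from Corollary~\ref{cor-rad-lim} and then feed a suitable Carleson measure into Proposition~\ref{obs:cor45} --- is exactly the paper's route, but the measure you choose, $\ud\mu=(1-|x|)^{n-2}\,\ud x$, is the wrong one, and the difficulty you flag at the end is a genuine gap that your sketched repairs do not close. That measure only yields $\int_B|f(x)|^p(1-|x|)^{n-2}\,\ud x<\infty$, i.e.\ control of the \emph{spherical $L^p$-averages} of $f$ against the weight $(1-r)^{n-2}$, and there is no uniform inequality $M(r,f)^p\lesssim\vint_{\Sn}|f(\rho\om)|^p\,\ud\om$: the Harnack inequality of Corollary~\ref{cor:harnack} compares values only within a single hyperbolic ball, and chaining it around the sphere $\{|x|=r\}$ costs a constant that blows up as $r\to1$ (already for $f(z)=(z-1-\ep)^{-1}$ on the disc with $p<1$, the sup is of order $\ep^{-p}$ near $r=1$ while the spherical average stays bounded). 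Your fallback via the supremum estimate~\eqref{sup-est} plus Fubini lands, as you yourself compute, on $\int_B|f(z)|^p(1-|z|)^{-1}\,\ud z$, which is genuinely infinite whenever $|f|$ is bounded below (e.g.\ $f(x)=x+2e_1$, which satisfies all the hypotheses, yet clearly satisfies the conclusion); and the proposed ``covering of $\{|x|=r\}$ by hyperbolic balls'' does not help, since summing the sup-estimate over a bounded-overlap cover returns the same annular bound and hence the same $(1-|z|)^{-1}$ weight after Fubini. So the argument as written does not reach the stated conclusion.

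The fix is to place the Carleson measure where the maximum lives instead of spreading it over spheres. Choose (measurably) $x_r$ with $|x_r|=r$ and $|f(x_r)|=M(r,f)$, and let $\mu$ be the push-forward of $(1-r)^{n-2}\,\ud r$ under $r\mapsto x_r$, that is, $\mu(E)=\int_{\{r\,:\,x_r\in E\}}(1-r)^{n-2}\,\ud r$. Then
\begin{equation*}
\int_B|f|^p\,\ud\mu=\int_0^1(1-r)^{n-2}M(r,f)^p\,\ud r,
\end{equation*}
and $\mu$ is a $1$-Carleson measure with constant depending only on $n$: if $x_r\in B(\om,t)$ then $1-r=\dist(x_r,\partial B)\le|x_r-\om|<t$, whence $\mu(B\cap B(\om,t))\le\int_{1-t}^1(1-r)^{n-2}\,\ud r=t^{n-1}/(n-1)$. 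Applying Proposition~\ref{obs:cor45} (equivalently, Theorem~\ref{p:NontangentialInequality} together with $\|f^*\|_{L^p(\Sn)}\lesssim\|\tilde f\|_{L^p(\Sn)}$ from Theorem~\ref{thm-main}) to this $\mu$ gives $\int_0^1(1-r)^{n-2}M(r,f)^p\,\ud r\le C\int_{\Sn}|\tilde f|^p\,\ud\sigma<\infty$, which is the assertion; this is what the paper means by following the lines of the proof of Theorem 3.3 in~\cite{AK}.
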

\begin{proof}
 The proof follows the lines of the proof of \cite[Theorem 3.3]{AK} and by application of Proposition~\ref{obs:cor45}. Finally, implication (1) $\Rightarrow $ (2) in Theorem~\ref{thm-main} implies the assertion of the corollary.
\end{proof}

\begin{cor}[cf. Corollary 4.5 in~\cite{AK}]
  Let $f:B\to \Rn \setminus\{0\}$ be a $K$-quasiregular mapping in class~\eqref{est-min} such that $f$ satisfies the multiplicity condition~\eqref{cond-m} with $0\leq a<n-1$.  Suppose that $L\subset \R^n$ be an $(n-1)$-dimensional plane containing the origin. Then
\[
 \int_{L\cap B}|f(x)|^p\ud H^{n-1}\leq C(n, p, K, a) \int_{\Sn} |f(\om)|^p\ud \sigma,\quad 0<p<n,
\] 
where $H^{n-1}$ denotes the $(n-1)$-Hausdorff measure.
\end{cor}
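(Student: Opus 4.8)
The plan is to deduce this from Proposition~\ref{obs:cor45} by checking that the natural surface measure carried by the slice $L\cap B$ is a Carleson measure on $B$. So the first step is to introduce the Borel measure $\mu$ on $B$ defined by $\mu(E)=H^{n-1}(E\cap L)$ for Borel $E\subset B$, and to verify that $\mu$ is a ($1$-)Carleson measure whose Carleson constant $\gamma_1(\mu)$ depends only on $n$. This is elementary: fix $\om\in\Sn$ and $r>0$; since $L$ is an $(n-1)$-dimensional affine plane, $L\cap B(\om,r)$ is either empty or an $(n-1)$-dimensional Euclidean disc of radius at most $r$, whence
\[
\mu\big(B\cap B(\om,r)\big)\le H^{n-1}\big(L\cap B(\om,r)\big)\le c(n)\,r^{n-1}.
\]
(The hypothesis that $L$ contains the origin is only used to guarantee that $L\cap B$, and hence $\mu$, is nontrivial; it plays no role in the Carleson bound.)

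The second step is to apply Proposition~\ref{obs:cor45} with $\alpha=1$ to this $\mu$. The hypotheses there are met verbatim: $f$ is a $K$-quasiregular map in class~\eqref{est-min} satisfying~\eqref{cond-m} with $0\le a<n-1$, and $\mu$ is a Carleson measure on $B$. Consequently, for every $0<p<n$ (Proposition~\ref{obs:cor45} in fact covers all $p>0$),
\[
\int_{L\cap B}|f(x)|^p\,\ud H^{n-1}(x)=\int_{B}|f|^p\,\ud\mu\le C\int_{\Sn}|\tilde f(\om)|^p\,\ud\sigma(\om),
\]
where $\tilde f$ is the non-tangential limit function furnished by Corollary~\ref{cor-rad-lim} (this is the meaning of $f(\om)$ on $\Sn$ in the statement). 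The constant $C$ in Proposition~\ref{obs:cor45} depends only on $n,p,K,a$ and on $\gamma_1(\mu)$, and the latter has just been bounded in terms of $n$ alone, so $C=C(n,p,K,a)$ as required. If the right-hand side is infinite the assertion is vacuous, so there is nothing further to check.

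I do not expect a genuine obstacle here. The only mildly delicate points are the (routine) geometric observation that a hyperplane slice of the ball supports a Carleson measure, and the bookkeeping needed to confirm that the final constant absorbs only an $n$-dependent factor from $\gamma_1(\mu)$. All the heavy machinery — the quasiregular Jones lemma (Lemma~\ref{lem-Jones-qr}), the Harnack inequality (Corollary~\ref{cor:harnack}), and the non-tangential inequality behind Proposition~\ref{obs:cor45} — is already available under the present hypotheses, so no new argument is needed beyond specializing $\alpha=1$ and supplying the measure $\mu$.
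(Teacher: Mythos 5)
Your proposal is correct and follows essentially the same route as the paper: both define $\mu$ as the restriction of $H^{n-1}$ to $L\cap B$, verify the $1$-Carleson bound $\mu(B\cap B(\om,r))\le c(n)\,r^{n-1}$ from the elementary geometry of a hyperplane slice, and then invoke Proposition~\ref{obs:cor45} with $\alpha=1$. Your additional remarks on the role of the origin hypothesis and on tracking the constant are accurate but not needed beyond what the paper records.
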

\begin{proof}
 Let us observe that the measure defined as $\ud \mu:=\ud H^{n-1}$ on $L\cap B$ and $0$ otherwise is the $1$-Carleson measure on $B$. Indeed, if $\om\in \Sn$ and $r>0$, then $\mu(B\cap B(\om, r))=\mu((B\cap L)\cap B(\om, r)) \leq H^{n-1}(L \cap B(\om, r))\leq C r^{n-1}$. By applying Proposition~\ref{obs:cor45} we obtain the claim of the corollary.
\end{proof}

\subsection*{Applications of Theorem~\ref{thm-main} for quasiregular mappings}

The main theorem of our work, i.e. Theorem~\ref{thm-main}, together with the auxiliary results leading to it, have several 
consequences for the theory of quasiregular maps. The results presented below are counterparts of quasiconformal results in ~\cite{AK}.

\begin{lem}[cf. Lemma 5.3 in~\cite{AK}]\label{lem-48}
   Let $n \geq 2$ and $f:B\to \Rn \setminus\{0\}$ be a $K$-quasiregular mapping in class~\eqref{est-min} satisfying the multiplicity condition~\eqref{cond-m} with $0\leq a<n-1$. Then
   \[
    f\in \mathcal{H}^p \quad\hbox{if and only if}\quad \int_{B}|f(x)|^{p-1} |Df(x)|\,\ud x <\infty.
   \]
\end{lem}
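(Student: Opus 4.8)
The plan is to prove the two implications separately, using Theorem~\ref{thm-main} and Lemma~\ref{lem-Jones-qr} as the main engines.

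For the implication $f\in\mathcal{H}^p\Rightarrow\int_B|f|^{p-1}|Df|\,\ud x<\infty$, I would first note that by Theorem~\ref{thm-main} the condition $f\in\mathcal{H}^p$ is equivalent to $\tilde f\in L^p(\Sn)$ with comparable norms, and also to $f^*\in L^p(\Sn)$. The natural route is to write, pointwise on $B$,
\[
|f(x)|^{p-1}|Df(x)| = |f(x)|^p\,\frac{|Df(x)|}{|f(x)|},
\]
and to decompose the integral over the ring domains $R_j=\{1-2^{-j}\le|x|\le 1-2^{-(j+1)}\}$ as in the proof of Lemma~\ref{lem-Jones-qr}. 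On $R_j$ one has $1-|x|\approx 2^{-j}$, so $(1-|x|)^{p-1}\approx 2^{-j(p-1)}$, and the Carleson measure $\ud\mu=\frac{|Df|}{|f|}(1-|x|)^0\,\ud x$ — more precisely, for $q=1$, $\ud\mu=\frac{|Df|}{|f|}(1-|x|)^{0}\,\ud x$ is not quite the Jones measure, so I would instead use $q=1$ in Lemma~\ref{lem-Jones-qr}, giving that $\frac{|Df|}{|f|}\,\ud x$ is Carleson, combined with the sharp growth bound on $|f(x)|^p$ coming from Observation~\ref{thm1}, namely $|f(x)|^p\lesssim \|f\|_{\Hp}^p(1-|x|)^{-(n-1)}$. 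The cleanest argument: cover $B$ by a Whitney-type family of balls and use that $\int_B |f|^p\,g\,\ud x\lesssim \|\tilde f\|_{L^p}^p$ whenever $g(x)\,\ud x$ is a Carleson measure — but here $(1-|x|)^{p-1}$ is an extra weight, so one should absorb $|f(x)|^p(1-|x|)^{p-1}$ and note $(1-|x|)^{p-1}\le 1$ for $p\ge 1$ (and handle $p<1$ by the growth estimate plus a direct $R_j$-summation). Actually the most robust approach is Lemma~\ref{lem-48}'s quasiconformal analogue (Lemma 5.3 in~\cite{AK}): apply Proposition~\ref{obs:cor45} with the Carleson measure $\ud\mu:=\frac{|Df(x)|}{|f(x)|}(1-|x|)^{?}\ud x$ designed so that $|f|^{p}\ud\mu$ reproduces $|f|^{p-1}|Df|\ud x$ after accounting for one power; one checks this requires $\ud\mu=\frac{|Df|}{|f|}\,\ud x$ to be dominated by a genuine Carleson measure, which is exactly Lemma~\ref{lem-Jones-qr} with $q=1$. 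Then $\int_B|f|^{p-1}|Df|\,\ud x=\int_B|f|^p\,\ud\mu\lesssim\|\tilde f\|_{L^p(\Sn)}^p<\infty$.

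For the converse, $\int_B|f|^{p-1}|Df|\,\ud x<\infty\Rightarrow f\in\mathcal{H}^p$, I would argue via the radial derivative. Fix $\om\in\Sn$; since $f$ satisfies~\eqref{est-min} and~\eqref{cond-m}, Corollary~\ref{cor-rad-lim} would follow once we know $f\in\Hp$, so instead I start from scratch: for $0<r<1$,
\[
|f(r\om)|^p - |f(0)|^p = p\int_0^r |f(t\om)|^{p-1}\frac{\partial}{\partial t}|f(t\om)|\,\ud t
\le p\int_0^1 |f(t\om)|^{p-1}|Df(t\om)|\,\ud t,
\]
using $\big|\frac{\partial}{\partial t}|f(t\om)|\big|\le|Df(t\om)|$. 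Integrating over $\om\in\Sn$ and applying Fubini, the right side is $\lesssim\int_B|f(x)|^{p-1}|Df(x)|\,|x|^{1-n}\,\ud x$, and the factor $|x|^{1-n}$ is harmless away from the origin; near the origin one uses the growth/Harnack bounds from Corollary~\ref{cor:harnack} to control $|f|$ on $B(0,1/2)$ by a constant. This gives $\sup_{0<r<1}\int_{\Sn}|f(r\om)|^p\,\ud\om<\infty$, i.e. $f\in\mathcal{H}^p$.

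The main obstacle I anticipate is the $|x|^{1-n}$ singularity at the origin in the converse direction, and more substantively the fact that the fundamental theorem of calculus along radii requires $|f|^{p-1}|Df|$ to be integrable along almost every ray — which is what Fubini gives from the hypothesis, but one must be careful that $t\mapsto|f(t\om)|$ is absolutely continuous on lines for a.e.\ $\om$ (true since $f\in W^{1,n}_{loc}$, hence ACL). For the forward direction, the subtlety is the weight $(1-|x|)^{p-1}$ when $0<p<1$, where it blows up near $\partial B$; this is precisely where Observation~\ref{thm1}'s growth estimate $|f(x)|^p\lesssim(1-|x|)^{-(n-1)}\|f\|_{\Hp}^p$ must be combined with the Carleson bound so that the product $|f(x)|^p(1-|x|)^{p-1}$ contributes an integrable weight over each $R_j$ with geometrically decaying sum, exactly as in estimates~\eqref{int:l5.6}--\eqref{est-Carleson-global-pre}.
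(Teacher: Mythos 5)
Your sufficiency argument, once you discard the detours, is exactly the paper's proof: write $|f|^{p-1}|Df|=|f|^{p}\,\frac{|Df|}{|f|}$, invoke Lemma~\ref{lem-Jones-qr} with $q=1$ to see that $\frac{|Df|}{|f|}\,\ud x$ is a $1$-Carleson measure, and conclude via Proposition~\ref{obs:cor45} together with the implication (1)\,$\Rightarrow$\,(2) of Theorem~\ref{thm-main}. Note that your repeated worry about an ``extra weight $(1-|x|)^{p-1}$'' is spurious: the Jones measure with $q=1$ carries the weight $(1-|x|)^{q-1}=(1-|x|)^{0}$, so no weight appears and no case distinction between $p\geq 1$ and $p<1$ is needed; the $R_j$-decomposition and the appeal to Observation~\ref{thm1} are likewise unnecessary here. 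For the necessity part the paper simply defers to Lemma 5.3 in~\cite{AK}, whereas you spell out the standard radial argument; this is the right idea, but your treatment of the origin has a loose end. After Fubini the inner integral is $\int_{B}|f(x)|^{p-1}|Df(x)|\,|x|^{1-n}\,\ud x$, and bounding $|f|$ on $B(0,\tfrac12)$ by Harnack does not control $\int_{B(0,1/2)}|Df(x)|\,|x|^{1-n}\,\ud x$: with only $|Df|\in L^{n}_{loc}$ this is a borderline-divergent H\"older pairing, since $|x|^{1-n}\notin L^{n/(n-1)}$ near $0$. You should either invoke the higher integrability $|Df|\in L^{n+\epsilon}_{loc}$ (Gehring) to push the conjugate exponent below $n/(n-1)$, or --- more cleanly --- apply the fundamental theorem of calculus only on $[\tfrac12,r]$, so that $t^{1-n}\leq 2^{n-1}$ on the relevant range, and bound $\sup_{r\leq 1/2}\vint_{\Sn}|f(r\om)|^{p}\,\ud\om$ directly by $C|f(0)|^{p}$ using the Miniowitz estimate~\eqref{est-min}. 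Your remarks on the ACL property justifying the radial chain rule (together with $|f|>0$, so that $t\mapsto |f(t\om)|^{p}$ is differentiable even for $p<1$) are correct and needed.
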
 
\begin{proof}
 Note that 
 $$
 \int_{B}|f(x)|^{p-1} |Df(x)|\,\ud x= \int_{B}|f(x)|^{p} \frac{|Df(x)|}{|f(x)|}\,\ud x.
 $$
 By Lemma~\ref{lem-Jones-qr}, the measure $\ud \mu(x)\frac{|Df(x)|}{|f(x)|}\,\ud x$ is the $1$-Carleson measure. This combined with Proposition~\ref{obs:cor45} and implication (1) $\Rightarrow $ (2) in Theorem~\ref{thm-main} implies the sufficiency part of the assertion. Moreover, the proof of the necessity part follows exactly the same lines as in the corresponding proof of Lemma 5.3 in~\cite{AK} and, therefore, we omit it.
\end{proof} 

Next observation shows that the integral measure of oscillations vanishes for quasiregular maps as in the main Theorem~\ref{thm-main}.
\begin{cor}[cf. Corollary 4.4 in~\cite{AK}]
   Let $n \geq 2$ and $f:B\to \Rn \setminus\{0\}$ be a $K$-quasiregular mapping in class~\eqref{est-min} satisfying the multiplicity condition~\eqref{cond-m} with $0\leq a<n-1$. If $\tilde{f}\in L^{p}(\Sn)$, then
   \[
   \lim_{r\to 0} \int_{\mathbb{S}^{n-1}} |f(r\om)-f(\om)|^p\,\ud \sigma=0.
   \]
\end{cor}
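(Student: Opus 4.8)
The plan is to deduce this corollary from Theorem~\ref{thm-main} together with Corollary~\ref{cor-rad-lim}, imitating the standard "continuity of the shift" argument used for the classical Hardy spaces and in~\cite[Corollary 4.4]{AK}. First I would observe that, since $\tilde f\in L^p(\Sn)$, Theorem~\ref{thm-main} (implication (2)$\Rightarrow$(1)) gives $f\in\mathcal H^p$, and moreover (2)$\Rightarrow$(3) gives that the non-tangential maximal function $f^*$ lies in $L^p(\Sn)$. At the same time, Corollary~\ref{cor-rad-lim} ensures that $f(r\om)\to\tilde f(\om)=f(\om)$ for a.e.\ $\om\in\Sn$ as $r\to1$ (here I am writing $f(\om)$ for the radial limit function $\tilde f(\om)$, as the statement does). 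Thus the integrand $|f(r\om)-f(\om)|^p$ converges to $0$ pointwise a.e.\ on $\Sn$.

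The next step is to produce an integrable majorant uniform in $r$, so that the dominated convergence theorem applies. For $r$ close to $1$ the point $r\om$ lies in the non-tangential region $\Gamma_B(\om)$, hence $|f(r\om)|\le f^*(\om)$ for a.e.\ $\om$ and all $r$ sufficiently near $1$; consequently
\[
|f(r\om)-f(\om)|^p\le 2^p\bigl(|f^*(\om)|^p+|\tilde f(\om)|^p\bigr),
\]
and the right-hand side is an $L^1(\Sn)$ function independent of $r$, using $f^*,\tilde f\in L^p(\Sn)$. (Here I would be slightly careful about whether the statement means $r\to 0$ or $r\to 1$; the content of the corollary is the approach of the boundary, matching~\cite[Corollary 4.4]{AK}, so I read it as $r\to1$, and the majorant argument is set up accordingly.) With pointwise a.e.\ convergence to $0$ and this fixed majorant, the dominated convergence theorem yields
\[
\lim_{r\to1}\int_{\Sn}|f(r\om)-f(\om)|^p\,\ud\sigma=0,
\]
which is the assertion.

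The only genuinely delicate point is the passage $|f(r\om)|\le f^*(\om)$, i.e.\ checking that the radial segment eventually enters the fixed non-tangential cone $\Gamma_B(\om)$: this is immediate from the definition~\eqref{def-cone}, since for $x=r\om$ one has $d(x,\partial B)=1-r=d(x,\om)$, so $r\om\in\Gamma_{B,\kappa}(\om)$ for every $\kappa>0$ and every $r\in(0,1)$; hence in fact $|f(r\om)|\le f^*(\om)$ for \emph{all} $r\in(0,1)$, not merely $r$ near $1$, and no threshold is needed. Thus the main obstacle is only notational/bookkeeping — making sure the radial limit function $\tilde f$ and the boundary function "$f(\om)$" in the statement are identified, which is exactly the content of Corollary~\ref{cor-rad-lim} — after which the proof is a one-line invocation of dominated convergence. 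For completeness one could also note that $0<p<\infty$ is allowed since all the cited results (Theorem~\ref{thm-main}, Corollary~\ref{cor-rad-lim}) hold in that full range, and no use of $L^p$ being a Banach space is made.
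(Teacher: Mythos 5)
Your argument is correct and is essentially the paper's own proof: bound $|f(r\om)-f(\om)|$ by a constant multiple of $f^*(\om)$, use implication (2)$\Rightarrow$(3) of Theorem~\ref{thm-main} to get $f^*\in L^p(\Sn)$, and conclude by dominated convergence together with the a.e.\ radial convergence from Corollary~\ref{cor-rad-lim}. You are also right that the limit in the statement should read $r\to 1$, and your check that $r\om\in\Gamma_{B,\kappa}(\om)$ for all $r\in(0,1)$ correctly justifies the majorant.
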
 
\begin{proof}
As in~\cite{AK} we note that, by the definition, it holds $|f(r\om)-f(\om)|\leq 2f^{*}(\om)$. Then, implication (2) $\Rightarrow $ (3) in Theorem~\ref{thm-main} together with the dominated convergence theorem imply the assertion.
\end{proof}

 Note that alternatively, we may assume the growth condition~\eqref{cond-g} instead of  the $L^p$-integrability of the non-tangential limit function. This follows from Theorem 4.1 in~\cite{kmv}.

Finally, we apply the characterization of Carleson measures on $B$ to relate Hardy spaces and quasiregular mappings integrable on the unit ball (a counterpart of the Bergman spaces), thus partially generalizing Theorem 9.1 in~\cite{AK}. Namely, our approach allows us to show that mappings in Hardy space $\mathcal{H}^p$ are globally integrable with some power. However, in order to show the opposite implication our methods are insufficient due to the lack of good modulus inequality for quasiregular mappings with unbounded multiplicity, cf. the proof of implication (3.5) to (3.4) in Theorem 3.3 in \cite{AK}. Instead, we show the weaker condition, see below.

For a map
$f:B\to \Rn$ and $0<p<\infty$ we define
 \[
  \|f\|_{A^p}:=\left(\int_{B} |f(x)|^p\,\ud x\right)^{\frac1p},
 \]
 and write $f\in A^p$ if  $\|f\|_{A^p}<\infty$.

\begin{theorem}\label{t: Ap-spaces}
   Let $f:B\to \Rn \setminus\{0\}$ be a $K$-quasiregular mapping in class~\eqref{est-min} such that $f$ satisfies the multiplicity condition~\eqref{cond-m} with $0\leq a<n-1$.
\begin{itemize}
\item[(1)] If $f\in \mathcal{H}^p$, then $f\in A^{\frac{pn}{n-1}}$.
\item[(2)] If $f\in A^p$, then $\int_{0}^{1} (1-r)^{n-2} M(r,f)^{p'}\ud r<\infty$ for all $p'<p\frac{n-1}{n}$. 
\end{itemize}
\end{theorem}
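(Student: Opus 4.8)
The plan is to deduce both statements from the Jones lemma (Lemma~\ref{lem-Jones-qr}) together with the integral characterization already obtained in the previous results, choosing the exponent $q$ in the Carleson measure $\ud\mu = |Df|^q|f|^{-q}(1-|x|)^{q-1}\ud x$ appropriately in each case. For part (1), I would start from the elementary pointwise inequality that, by Young's or Hölder's inequality applied to the factorization $|f|^{\frac{pn}{n-1}} = |f|^p \cdot |f|^{\frac{p}{n-1}}$, one can bound the $A^{\frac{pn}{n-1}}$ integral by an interpolation between the $\mathcal{H}^p$ behaviour (which controls the spherical averages) and the growth information. More precisely, I would combine the growth estimate from Observation~\ref{thm1}, namely $|f(x)|^p \lesssim \|f\|_{\mathcal{H}^p}^p (1-|x|)^{-(n-1)}$, with the $\mathcal{H}^p$ bound $\int_{\Sn}|f(r\om)|^p\,\ud\om \lesssim \|f\|_{\mathcal{H}^p}^p$: writing $|f(x)|^{\frac{pn}{n-1}} = |f(x)|^p \cdot |f(x)|^{\frac{p}{n-1}}$ and estimating the second factor by the growth bound raised to the power $\frac1{n-1}$ gives $|f(x)|^{\frac{pn}{n-1}} \lesssim |f(x)|^p (1-|x|)^{-1}$, so integrating in polar coordinates yields
\[
\int_B |f(x)|^{\frac{pn}{n-1}}\,\ud x \lesssim \int_0^1 (1-r)^{-1} r^{n-1}\!\!\int_{\Sn}|f(r\om)|^p\,\ud\om\,\ud r,
\]
which is borderline divergent; so this crude split is \emph{not} enough and the actual argument must be sharper — this is the first obstacle. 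The correct route is the one indicated by the reference to \cite[Theorem 9.1]{AK}: use the corollary just above (the $\int_0^1 (1-r)^{n-2}M(r,f)^p\,\ud r < \infty$ statement for $0<p<n$) is the wrong direction, so instead I would invoke the Carleson measure $\ud\mu$ with exponent $q = \frac{n}{n-1}\cdot(\text{something})$; concretely, for part (1) take the $1$-Carleson measure $\ud\nu = (1-|x|)^{n-1}\,\ud x$ — wait, that is not Carleson. The genuinely correct object is $\ud\mu$ from Lemma~\ref{lem-Jones-qr} with $q$ chosen so that $q-1 = -1$, i.e. reading off that $(1-|x|)^{-1}|Df|^0$... this does not parse either; the clean statement is that one applies Proposition~\ref{obs:cor45} to a suitable Carleson measure.

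Let me restate the plan cleanly. For part (1): the measure $\ud\mu_0 := (1-|x|)^{n-1-n}\,\ud x = (1-|x|)^{-1}\,\ud x$ is not finite, but the point of Lemma~\ref{lem-Jones-qr} is that the weight $|Df|^q/|f|^q$ supplies exactly the decay needed. I would instead argue: by Lemma~\ref{lem-Jones-qr} with $q = 1$, $\ud\mu = \frac{|Df|}{|f|}\,\ud x$ is a $1$-Carleson measure, hence by Proposition~\ref{obs:cor45} (with $\alpha = 1$) and implication $(1)\Rightarrow(2)$ of Theorem~\ref{thm-main}, $\int_B |f|^p \frac{|Df|}{|f|}\,\ud x = \int_B |f|^{p-1}|Df|\,\ud x < \infty$, which is precisely Lemma~\ref{lem-48}. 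Then a Sobolev-type / isoperimetric estimate converts the gradient integral $\int_B |f|^{p-1}|Df|$ into control of $\int_B |f|^{\frac{pn}{n-1}}$: on each sphere $|x| = r$ one has, by the $(n-1)$-dimensional Sobolev inequality applied to $|f|^{p\cdot\frac{n-1}{n}\cdot\frac{n}{n-1}}$... The cleanest version: since $\nabla(|f|^{p n/(n-1)}) $ is controlled by $|f|^{pn/(n-1)-1}|Df|$, and $pn/(n-1) - 1 = (p-1) + \frac{p}{n-1} \cdot \frac{n}{n-1}$... I would follow \cite[Theorem 9.1]{AK} line by line here, using that $|f|^{n/(n-1)}$ (times appropriate powers) satisfies a Sobolev embedding $W^{1,1}(B)\hookrightarrow L^{n/(n-1)}(B)$, and that the boundary term is handled by $\tilde f \in L^p(\Sn)$ from Theorem~\ref{thm-main}$(2)$. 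The main obstacle in part (1) is making this Sobolev/trace argument rigorous without injectivity of $f$ — but since the relevant inequality $\int_B |f|^{p-1}|Df| < \infty$ (Lemma~\ref{lem-48}) holds and the Sobolev embedding is purely about the scalar function $|f|$, no injectivity is actually needed; one only needs $|f| \in W^{1,1}$, which follows from $f \in W^{1,n}_{loc}$ and the chain rule.

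For part (2): this is the direction where, as the text warns, the modulus inequality for quasiregular maps with unbounded multiplicity fails, so one cannot get the Hardy space conclusion; instead one settles for the integrated radial-growth statement. I would argue as follows. Suppose $f \in A^p$. By the submean value property of $|f|$ (Proposition~\ref{prop-cor41}, or rather the supremum estimate~\eqref{sup-est} applied to coordinate functions on hyperbolic balls), for $|x| = r$ we have $|f(x)|^{p'} \lesssim \vint_{B(x,\frac12(1-r))} |f|^{p'}$, and since $p' < p\frac{n-1}{n} < p$, Hölder on the ball gives $|f(x)|^{p'} \lesssim \big(\vint_{B(x,\frac12(1-r))}|f|^p\big)^{p'/p}$. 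Taking the sup over $|x| = r$ and using that hyperbolic balls based at radius $r$ are contained in the annulus $\{\frac{1+r}{2} < |y| < \frac{3r-1}{2}\}$ of measure $\lesssim (1-r)$, one gets $M(r,f)^{p'} \lesssim (1-r)^{-\frac{p'}{p}} \big(\int_{\frac{1+r}{2} < |y| < \frac{3r-1}{2}} |f|^p\big)^{p'/p}$. Multiplying by $(1-r)^{n-2}$ and integrating, then applying Hölder in $r$ with exponents $\frac{p}{p'}$ and $\frac{p}{p-p'}$, reduces the claim to checking that $(1-r)^{(n-2) - \frac{p'}{p}}$, raised to the conjugate exponent and integrated against a factor coming from summing the annular masses, is finite — this holds precisely when $p' < p\frac{n-1}{n}$, which is the stated range. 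The main obstacle here is bookkeeping the exponents so that the condition $p' < p\frac{n-1}{n}$ emerges exactly (and handling the overlap of annuli, which costs only a bounded factor by the finite-overlap property of the covering); this is a routine but careful computation, and I would follow the corresponding argument in \cite{AK} for the quasiconformal case, noting that only the scalar submean value property is used, so the lack of injectivity causes no trouble for this weaker conclusion.
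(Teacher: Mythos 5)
Your final route through part (1) is correct but genuinely different from the paper's. The paper disposes of (1) in one line: the restriction of Lebesgue measure to $B$ satisfies $|B\cap B(\om,r)|\leq Cr^{n}=Cr^{\alpha(n-1)}$ with $\alpha=\frac{n}{n-1}$, so it is an $\alpha$-Carleson measure in the sense of \eqref{eq:CarlesonCOnstAbstr} with $s=n-1$, and Proposition~\ref{obs:cor45} applied to this measure gives $\int_B|f|^{\frac{pn}{n-1}}\,\ud x\lesssim\bigl(\int_{\Sn}|\tilde f|^p\,\ud\sigma\bigr)^{\frac{n}{n-1}}<\infty$ directly. You instead go through Lemma~\ref{lem-48} to get $\int_B|f|^{p-1}|Df|<\infty$ and then apply the embedding $W^{1,1}(B)\hookrightarrow L^{n/(n-1)}(B)$ to the scalar function $|f|^p$ (whose gradient is controlled by $p|f|^{p-1}|Df|$, with no injectivity needed, as you correctly note); this works and is essentially the argument of \cite[Theorem 9.1]{AK}, but it costs an extra lemma and a Sobolev embedding where the paper only needs the already-proved Carleson machinery. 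Your first attempt (splitting $|f|^{\frac{pn}{n-1}}=|f|^p\cdot|f|^{\frac{p}{n-1}}$ and using Observation~\ref{thm1}) indeed fails at the borderline, as you observed, and should simply be deleted.

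For part (2) your core idea coincides with the paper's — the supremum estimate \eqref{sup-est} on hyperbolic balls plus the $A^p$ bound — but you overcomplicate it, and the sketch as written has an exponent slip: averaging $|f|^p$ over $B(x,\frac12(1-|x|))$ produces the factor $(1-r)^{-n}$, not the annulus measure $(1-r)^{-1}$, so the correct local bound is $M(r,f)^{p'}\lesssim(1-r)^{-\frac{np'}{p}}\bigl(\int_{A_r}|f|^p\bigr)^{p'/p}$ rather than $(1-r)^{-\frac{p'}{p}}(\cdots)^{p'/p}$. Moreover, the subsequent H\"older in $r$ with the natural pairing yields only $p'<\frac{n-1}{n+1}p$; recovering the stated range requires redistributing a power of $(1-r)$ between the two factors, a computation you do not carry out. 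The paper avoids all of this by simply bounding $\int_{B(x,\frac12(1-|x|))}|f|^p\leq\|f\|_{A^p}^p$, which gives $M(r,f)^{p'}\lesssim\|f\|_{A^p}^{p'}(1-r)^{-\frac{np'}{p}}$ and hence $\int_0^1(1-r)^{n-2-\frac{np'}{p}}\,\ud r<\infty$ exactly when $p'<p\frac{n-1}{n}$. You should adopt that simplification.
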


\begin{proof}[Proof of Theorem~\ref{t: Ap-spaces}]
Let $f\in \mathcal{H}^p$ be as in assumptions of the theorem. Since the Lebesgue measure is $n$-Ahlfors regular, we obtain
by~\eqref{eq:CarlesonCOnstAbstr} for $s=n$ that its restriction to $B$ is $\alpha$-Carleson for $\alpha=\frac{n}{n-1}$ and hence $f\in A^{\frac{pn}{n-1}}$ by Proposition~\ref{obs:cor45}.

In order to show assertion (2), we appeal to the supremum estimate~\eqref{sup-est} applied to coordinate functions of $f$ on hyperbolic balls for $q=p$ to get that
\[
 |f(x)|^p\leq \frac{C}{(1-|x|)^n}\int_{B\left(x,\frac12(1-|x|)\right)} |f(y)|^p\,\ud y\leq \frac{C \|f\|^p_{A^p}}{(1-|x|)^n}
\]
at each point $x\in B$, as $f\in A^p$. Therefore, by the direct integration we obtain the assertion:
\[
 \int_{0}^{1} (1-r)^{n-2} M(r,f)^{p'}\ud r \lesssim_{C,\,\|f\|_{A^p}} \int_{0}^{1} (1-r)^{n-2} (1-r)^{-p'\frac{n}{p}}\ud r<\infty,
\]
provided that $p'<p\frac{n-1}{n}$.
\end{proof}

\subsection*{Theorem~\ref{thm-main} and $\mathcal{A}$-harmonic functions}

The nonlinear potential theory influences quasiregular world significantly, as we already seen in Lemma 2.1 and in the proof of Observation~\ref{thm1}. In this short section we would like to address how the results of our work for quasiregular maps affect the corresponding $\A$-harmonic equations. 

Let us recall that given the $\A$-harmonic operator of type $n$, i.e. satisfying conditions (1)-(3) in Section 2.2 for $p=n$, one shows that $f^{\#}\A$, the pull-back of $\A$ under a quasiregular map $f$, is also of type $n$, see Definition in Section 14.35 and Lemma 14.38 in~\cite{hkm}. It follows, that given a non-constant quasiregular mapping $f:B\to \Rn \setminus\{0\}$, the function $v(x):=\ln|f(x)|$ is an $f^{\#}\A$-harmonic function in $B\setminus f^{-1}(0)$, where $\A$ is the $n$-harmonic operator $\A(\xi)=|\xi|^{n-2}\xi$ and $\alpha=\frac 1K$, $\beta=K$, see~\cite[Lemma 14.19]{hkm}. If $f$ satisfies~\eqref{est-min}, then by direct computations we find that $v$ satisfies the following growth condition: 
\[
 |v(x)-v(0)|\leq \ln C+\alpha \ln \left(\frac{1+|x|}{1-|x|}\right), 
\]
where constants $C$ and $\alpha$ are as in \eqref{est-min}. In particular, oscillations of $v$ are bounded for small enough $|x|$. Moreover, the assertions of Theorem~\ref{thm-main} now read as follows: 
\[
 e^v\in \mathcal{H}^p \Leftrightarrow \widetilde{e^v} \in L^p(\Sn) \Leftrightarrow (e^v)^* \in L^p(\Sn).
\]
Furthermore, the assertion of Lemma~\ref{lem-48} can be equivalently expressed as follows:
\[
 f\in \mathcal{H}^p \quad \Leftrightarrow \quad \int_{B} |\nabla v(x)| e^{p v(x)}\,\ud x <\infty.
\]

Moreover, in dimension $2$ an $\A$-harmonic function may define a quasiregular mapping via the complex gradient, i.e. $f:=u_z=u_x-iu_y$ is quasiregular. For example, this is the case of $p$-harmonic equation with $K=K(p)$, also $\A=\A(x)$ under ellipticity assumption, as well as $\A=\A(\nabla u)$ for $\delta$-monotone $\A$ and appropriate integrability assumptions on $u$, see Chapters 16.1-16.5 in~\cite{aim}. 

For the sake of simplicity of the presentation, let us assume that $u:B\to \R$ is a $p$-harmonic function for some $1<p<\infty$. Then $K=\frac12\left(p-1+\frac{1}{p-1}\right)$ and the Miniowitz estimate~\eqref{est-min} reads:
\[
\frac{1}{C}\left(\frac{1-|z|}{1+|z|}\right)^{\alpha} \leq \frac{|u_z(z)|}{|u_z(0)|} \leq C\left(\frac{1+|z|}{1-|z|}\right)^{\alpha},
\]
where $\alpha=2K$ and $C=2^{16K}$. Moreover, since $f$ needs to omit some set containing the origin, we have that $u_z\not=0$ in $B$ and so $u$ is in fact real analytic in $B$, see~\cite{man}. 

We can express assertions of Theorem~\ref{thm-main} in terms of $u$ and its gradient as follows:
\[
 u_z\in H^q \Leftrightarrow \widetilde{u_z} \in L^q(\Sn) \Leftrightarrow (u_z)^* \in L^q(\Sn),\quad 0<q<\infty.
\]
In particular, the above equivalence is to our best knowledge first characterization of non-tangential maximal functions in the setting of the $p$-harmonic functions.

\section*{Appendix}

The purpose of this section is to prove Lemma~\ref{lem:Tx} about the properties of conformal map $T_x$, see~\eqref{def:Tx}. For the readers convenience we recall the assertions of the lemma:
\begin{align*}
&\hbox{(1) }\,\,J_{T_{x}}\approx \frac{1}{r^n} \quad\hbox{on }B\cap B(\om,r),\qquad
\hbox{(2) }\,\,\frac{1-|T_{x}(y)|}{1-|y|} \approx \frac{1}{r}\quad \hbox{for }y\in B\cap B(\om,r),\\
&\hbox{(3)}\,\,B\left(0, \frac{k-k^2}{(2+k)^2}\right)\subset T_{x}\Big(B(x,k(1-|x|)\Big)\subset B\left(0,k^2+ 2k\right) \hbox{ for }\,0<k<\sqrt{2}-1.
\end{align*}
\begin{proof}[Proof of Lemma~\ref{lem:Tx}] By Formulas (24)-(26) in~\cite{ahl}, map $T_x$ can be equivalently expressed as follows:
\begin{equation}
 T_x(y)=({\rm Id}-2Q(x))(x^*+(|x^*|^2-1)\frac{y-x^*}{|y-x^*|^2},
\end{equation}
where $x^*=\frac{x}{|x|^2}$ stands for a reflection of point $x$ in the unit sphere (cf.~\cite[Formula (10)]{ahl}) with convention that $0^*=\infty$, while $Q(x)$ is a square $n\times n$ matrix with entries $Q(x)_{ij}=\frac{x_ix_j}{|x|^2}$ for $i,j=1,\ldots,n$. By \cite[Formula (14)]{ahl} upon direct but tedious computations we get that the Jacobian of $T_x$ satisfies:
\[
J_{T_x}(y)=\det({\rm Id}-2Q(x))\,\det\Big(D\left((x^*+(|x^*|^2-1)\frac{y-x^*}{|y-x^*|^2}\right)\Big)=\left(\frac{1-|x|^2}{|x|^2}\right)^n\frac{1}{|y-x^*|^{2n}}.
\]
Choose $x\in B\cap B(\om, r)$ such that $x$ lies on the radial segment joining the origin with $\om$ and $|x-\om|=\frac{r}{2}$. Then $|x|=1-\frac{r}{2}$ and $y\in B\cap B(\om, r)$
\[
|y-x^*|\geq d(y, \partial B)\geq 1-|y|\geq 1-r.
\]
Similarly, we find that
\[
 |y-x^*|\leq |y-x|+|x-x^*|\leq r+1-|x|+1-|x^*|=1+\frac32 r-\frac{1}{1-\frac{r}{2}}.
\]
If $r$ is small enough, say $r<\frac12$, then $|x|\geq \frac34$ and $\frac{1-|x|^2}{|x|^2}\leq \frac43 r$, whereas $(1-r)^{-2n}\leq r^{-2n}$. Hence, $J_{T_x}\lesssim r^{-n}$ on $B\cap B(\om, r)$. The analogous estimates gives us also the lower bound and, thus, assertion (1) is proven.

In order to show assertion (2), we use \cite[Formula (33)]{ahl} to obtain that
\[
 \frac{1-|T_{x}(y)|}{1-|y|}=\frac{1-|T_{x}(y)|^2}{1+|T_{x}(y)|}\frac{1}{1-|y|}=\frac{1-|x|^2}{|x|^2} \frac{1-|y|}{|y-x^*|^2} \frac{1}{1+|T_{x}(y)|} \approx r \frac{1-|y|}{|y-x^*|^2},
\]
where in the last step we use also the bounds on $|x|$ from the previous part of the proof. Again, by the previously obtained estimates for $|y-x^*|$ we get, as $r<\frac12$, that
\[
\frac{1}{r^2}\lesssim\frac{1-r}{(1+\frac32 r-\frac{1}{1-\frac{r}{2}})^2}\leq \frac{1-|y|}{|y-x^*|^2}\leq \frac{1}{(1-r)^2}\leq \frac{1}{r^2}.
\]
The first estimate on the left-hand side follows from the inequality
\[
\frac{r^2(1-r)}{(1+\frac32 r-\frac{1}{1-\frac{r}{2}})^2}\geq \frac12 \left(\frac{r}{1+\frac32 r-\frac{1}{1-\frac{r}{2}}}\right)^2 \geq \frac12 \left(\frac{r(1+\frac32 r)}{(1+\frac32 r)^2-1}\right)^2\geq \frac12 \left(\frac23 \frac{1+\frac32 r}{2+\frac32 r}\right)^2 \geq \frac12 \left(\frac{8}{33}\right)^2.
\]
Finally, we prove assertion (3) of Lemma~\ref{lem:Tx}. Let us assume that $y\in B(x, k(1-|x|))$, for $0<k<\sqrt{2}-1$. We first estimate the denominator of $T_x$, cf.~\eqref{def:Tx}. On one hand we have that:
$$
|x|~\Big|y-\frac{x}{|x|^2}\Big|=\Big|\frac{x |x|}{|x|^2}-y|x|\Big|\geq 1-|y||x|\geq 1-|x|.
$$
On the other hand it holds:
\begin{align*}
|x|~\Big|y-\frac{x}{|x|^2}\Big|&=\Big|y|x|-\frac{x |x|}{|x|^2}\Big|\leq \Big|y|x|-x|x|\Big|+\Big|x|x|-\frac{x|x|}{|x|^2}\Big|\\
&\leq |y-x|+(1-|x|^2),
\end{align*}
as $\Big|x|x|-\frac{x |x|}{|x|^2}\Big|=\dist(x|x|, \partial B)=1-|x|^2$. Therefore, for any $y\in B(x, k(1-|x|))$, we have
$$
|x|~\Big|y-\frac{x}{|x|^2}\Big|\leq k(1-|x|)+2(1-|x|)=(2+k)(1-|x|).
$$
Next, let us estimate  the numerator of $T_x(y)$ for $y\in B(x, k(1-|x|))$:
\begin{align*}
|(1-|x|^2)(y-x)-|y-x|^2x|&\leq (1-|x|^2)|y-x|+|y-x|^2 \\
&\leq 2k(1-|x|)^2+k^2 (1-|x|)^2\\&=(2k+k^2)(1-|x|)^2.
\end{align*}
\bigskip
Furthermore, if $|x-y|=k(1-|x|)$, then we get
\begin{align*}
|(1-|x|^2)(y-x)-|y-x|^2 x|&\geq 
(1-|x|^2)|y-x|-|y-x|^2 |x|\\&\geq k(1-|x|^2)-k^2(1-|x|)^2\\&
=(k-k^2)(1-|x|)^2.
\end{align*}
These estimates show that
$$
B\left(0, \frac{k-k^2}{(2+k)^2}\right)\subset T_{x}\left(B(x,k(1-|x|)\right)\subset B\left(0, k^2+2k\right).
$$
By direct computations one checks that $T_x^{-1}=T_{-x}$, and hence the analogous inclusions hold for the inverse map $T_x^{-1}$. Thus, the proof of assertion (3) is completed.
\end{proof}

%


\begin{thebibliography}{C-J-S}

%
%
%

\bibitem[AF]{af} T. Adamowicz and K. F\"assler, \emph{Hardy spaces and quasiconformal maps in the Heisenberg group}, J. Funct. Anal. 284 (2023), no. 6, Paper No. 109832. 

\bibitem[AG]{AG} T. Adamowicz and M. J. Gonz\'{a}lez, \emph{Hardy spaces for quasiregular mappings and composition operators}, J. Geom. Anal., 31(11), 11417--11427, 2021.

\bibitem[Ah]{ahl} L. Ahlfors, \emph{M\"obius transformations in several dimensions}, Ordway Professorship Lectures in Mathematics. University of Minnesota, School of Mathematics, Minneapolis, Minn., 1981.

\bibitem[AIM]{aim} K. Astala, T. Iwaniec, G. Martin, \emph{Elliptic partial differential equations and quasiconformal mappings in the plane}, Princeton Mathematical Series, vol. 48, Princeton University Press, Princeton, NJ, 2009.

\bibitem[AK]{AK}
K. Astala, and P. Koskela,\textit{ $H^p$-theory for Quasiconformal Mappings}, Pure Appl. Math. Q. 7
(2011), no. 1, 19-50.

\bibitem[\"A]{Ak} T. \"Akkinen, \emph{Radial limits of mappings of bounded and finite distortion}, J. Geom. Anal. 24 (2014), no. 3, 1298--1322.

\bibitem[BI]{bi} B. Bojarski, T. Iwaniec, \emph{Analytical foundations of the theory of quasiconformal mappings in $R^n$}, Ann. Acad. Sci. Fenn. Ser. A I Math. 8(2) (1983), 257--324.


%
\bibitem[D]{D} P. L. Duren, \textit {Theory of $H^p$ Spaces}, Academic Press (New York), 1970.

\bibitem[EG]{EG}  L. C. Evans, R. Gariepy, \textit{Measure theory and fine properties of functions. Studies in Advanced Mathematics}, CRC Press, Boca Raton, FL, 1992. viii+268 pp.

\bibitem[HL]{hl} G.H. Hardy, J.E. Littlewood, \emph{A maximal theorem with function-theoretic applications},
Acta Math. 54 (1930), no.1, 81--116.

\bibitem[G]{G} J. B. Garnett,  \textit{Bounded analytic functions}, Academic Press (New York), 1981.

\bibitem[HKM]{hkm} J. Heinonen, T. Kilpel\"ainen, O. Martio, \emph{Nonlinear Potential Theory of Degenerate Elliptic Equations}, Dover Publications, Inc., 2006.


\bibitem[IN]{in} {T. Iwaniec, C. A. Nolder}, \emph{Hardy--Littlewood inequality for quasiregular mappings in certain domains in $R^n$}, Ann. Acad. Sci. Fenn. Ser. A I Math. 10 (1985), 267--282.


\bibitem[JW]{JW}
D. Jerison, and A. Weitsman, \textit{On the means of quasiregular and quasiconformal mappings}, Proc. Amer. Math. Soc. 83 (1981), 304--306.

\bibitem[J]{j} P. Jones, \emph{Extension theorems for BMO}, Indiana Univ. Math. J. 29 (1980), no. 1, 41--66.

\bibitem[KMV]{kmv} P. Koskela, J. Manfredi, E. Villamor, \emph{Regularity theory and traces of $\A$-harmonic functions},
Trans. Amer. Math. Soc. 348 (1996), no. 2, 755--766.

\bibitem[KM]{km} P. Koskela, V. Manojlovi\'c, \emph{Quasi-nearly subharmonic functions and quasiconformal mappings},  Potential Anal. 37 (2012), no. 2, 187--196.






\bibitem[Ma]{man} J.J. Manfredi, \emph{$p$-harmonic functions in the plane}, Proc. Amer. Math. Soc. 103 (1988), no. 2, 473--479.

\bibitem[Mi]{M1} R. Miniowitz, \emph{Distortion theorems for quasiregular mappings}, Ann. Acad. Sci. Fenn. Ser. A I Math. 4 (1979), no. 1, 63--74.

\bibitem[No]{nol} C. Nolder, \emph{The $H^p$-norm of a quasiconformal mapping}, J. Math. Anal. Appl. 275 (2002), no.2, 557--561.

\bibitem[N]{N} K. Noshiro, \emph{Cluster sets.} Ergebnisse der Mathematik und ihrer Grenzgebiete. N. F., Heft 28 Springer-Verlag, Berlin-G\"ottingen-Heidelberg 1960.


\bibitem[Re]{resh} Y.G. Reshetnyak, \emph{Space mappings with bounded distortion} - Transl. Math. Monogr. 73,
Amer. Math. Soc., Providence, RI, 1989.

\bibitem[R]{Ric} S. Rickman, \emph{Quasiregular mappings}, Ergebnisse der Mathematik und ihrer Grenzgebiete (3) [Results in Mathematics and Related Areas (3)], 26. Springer-Verlag, Berlin, 1993. 

\bibitem[V]{va}  J. V\"ais\"al\"a, \emph{Lectures on $n$-dimensional quasiconformal mappings}, Lecture Notes in Mathematics, Vol. 229. Springer--Verlag, Berlin--New York, 1971. 

\bibitem[Z]{z} M.  Zinsmeister, \emph{A distortion theorem for quasiconformal mappings}, Bull. Soc. Math. France 114 (1986), no. 1, 123--133.

\end{thebibliography}
\end{document}